\newtheorem{theorem}{Theorem}[section]
\newtheorem{proposition}{Proposition}[section]
\newtheorem{lemma}{Lemma}[section]
\newtheorem{corollary}{Corollary}[section]
\newtheorem{conjecture}{Conjecture}[section]
\newtheorem{remark}{Remark}[section]
\numberwithin{equation}{section}
\newcommand{\ord}{\text{ord}}
\newcommand{\hcom}[1]{{\color{cyan}{Hung: #1}} }
\newcommand{\kommentar}[1]{}
\newcommand{\acom}[1]{{\color{blue}{Alexandra: #1}} }
\begin{document}

\title[The Ratios Conjecture and upper bounds for negative moments]{The Ratios Conjecture and upper bounds for negative moments of $L$--functions over function fields}

\author{Hung M. Bui, Alexandra Florea and Jonathan P. Keating}
\address{Department of Mathematics, University of Manchester, Manchester M13 9PL, UK}
\email{hung.bui@manchester.ac.uk}
\address{Department of Mathematics, UC Irvine, Irvine CA 92617, USA}
\email{floreaa@uci.edu}
\address{Mathematical Institute, University of Oxford, Oxford OX2 6GG, UK}
\email{keating@maths.ox.ac.uk}

\begin{abstract}
We prove special cases of the Ratios Conjecture for the family of quadratic Dirichlet $L$--functions over function fields. More specifically, we study the average of $L(1/2+\alpha,\chi_D)/L(1/2+\beta,\chi_D)$, when $D$ varies over monic, square-free polynomials of degree $2g+1$ over $\mathbb{F}_q[x]$, as $g \to \infty$, and we obtain an asymptotic formula when $\Re \beta \gg g^{-1/2+\varepsilon}$. We also study averages of products of $2$ over $2$ and $3$ over $3$ $L$--functions, and obtain asymptotic formulas when the shifts in the denominator have real part bigger than $g^{-1/4+\varepsilon}$ and $g^{-1/6+\varepsilon}$ respectively. The main ingredient in the proof is obtaining upper bounds for negative moments of $L$--functions. The upper bounds we obtain are expected to be almost sharp in the ranges described above. As an application, we recover the asymptotic formula for the one-level density of zeros in the family with the support of the Fourier transform in $(-2,2)$.
\end{abstract}

\allowdisplaybreaks

\maketitle

\section{Introduction}

The Ratios Conjecture, formulated by Conrey, Farmer and Zirnbauer \cite{cfz}, is a wide-reaching conjecture with applications to a number of notoriously difficult questions in number theory. The conjectures in \cite{cfz}, which apply to different families of $L$--functions, generalize an earlier conjecture of Farmer \cite{farmer} for ratios of the Riemann zeta-function. Namely, for complex numbers $\alpha, \beta, \gamma, \delta$ with positive real parts of size $c/\log T$, Farmer conjectured that
$$ \frac{1}{T} \int_0^T \frac{\zeta(s+\alpha) \zeta(1-s+\beta)}{ \zeta(s+\gamma) \zeta(1-s+\delta)} \, dt \sim \frac{ (\alpha+\delta)(\beta+\gamma)}{(\alpha+\beta)(\gamma+\delta)}-T^{-(\alpha+\beta)} \frac{ (\alpha-\gamma)(\beta-\delta)}{(\alpha+\beta)(\gamma+\delta)} .$$
The conjecture above would imply several results on zeros of the Riemann zeta-function, such as Montgomery's pair correlation conjecture \cite{montgomery}.

The conjectures of Conrey, Farmer and Zirnbauer generalize Farmer's conjecture to quotients of products
of an arbitrary number of $L$--functions averaged over a family, and include precise lower-order terms down to a power-saving error term. 
To obtain these conjectures, the authors extend the ``recipe'' of Conrey, Farmer, Keating, Rubinstein and Snaith \cite{CFKRS} which was used to predict asymptotic formulas for moments in families of $L$--functions. In \cite{cfz}, a comparison is made to the analogous quantities for the
characteristic polynomials of matrices averaged over classical compact groups. Since it is believed that families of $L$--functions can be modeled by the characteristic polynomials of matrices from one of the classical compact groups, it is of interest to consider the analogous questions of computing the ratios of characteristic polynomials in random matrix theory. These questions have been settled for random matrices, as in \cite{zirn, forester, borodin}.

The Ratios Conjecture has wide applicability to many questions in number theory related to zeros of $L$--functions, as detailed in \cite{cs}. For example, one can use the Ratios Conjecture to compute the one-level density of zeros in families of $L$--functions, including lower-order terms, or to compute lower order terms in the pair correlation of zeros of the Riemann zeta-function, which were originally conjectured by Bogomolny and Keating \cite{bk}. 

While in the case of moments of $L$--functions, one can usually compute a few moments in each family (see, for example, \cite{H-B, sound23, kmv, young} for results in various families), there are no rigorous results in the literature on the Ratios Conjecture, as far as the authors are aware (there is some forthcoming work of Cech \cite{cech} using multiple Dirichlet series which addresses the Ratios Conjecture in certain ranges of the parameters). 
One difficulty when computing averages of quotients of $L$--functions is the fact that for small shifts in the denominator, one is very close to possible zeros of the $L$--function. The closer we are to the critical line, the more difficult the problem gets.

The question of obtaining asymptotic formulas for quotients of $L$--functions is closely related to that of obtaining upper bounds for negative moments of $L$--functions. 

In the case of the Riemann zeta-function $\zeta(s)$, a conjecture due to Gonek \cite{gonek} states the following. 
\begin{conjecture}[Gonek] \label{gonek_conj}
Let $k>0$ be fixed. Uniformly for $1 \leq \delta \leq \log T$,
$$ \frac{1}{T}\int_1^T \Big| \zeta \Big(\frac{1}{2}+ \frac{\delta}{\log T}+it \Big) \Big|^{-2k} \, dt \asymp  \Big(\frac{\log T}{\delta} \Big)^{k^2}, $$
and uniformly for $0 < \delta \leq 1$,
$$
\frac{1}{T}\int_1^T \Big| \zeta \Big(\frac{1}{2}+ \frac{\delta}{\log T}+it \Big) \Big|^{-2k} \, dt \asymp 
\begin{cases}
(\log T)^{k^2} & \mbox{ if } k<1/2, \\
(\log\frac{e}{\delta}) (\log T)^{k^2} & \mbox{ if } k=1/2, \\
\delta^{1-2k} (\log T)^{k^2} & \mbox{ if } k>1/2.
\end{cases}
$$
\end{conjecture}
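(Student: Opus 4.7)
The plan would be to establish matching upper and lower bounds separately, both via mollifier techniques. For the upper bound I would introduce a Dirichlet polynomial $M(s) = \sum_{n \leq T^\theta} a_k(n) n^{-s}$ whose coefficients approximate those of $\zeta(s)^{-k}$ on the line $\Re s = 1/2 + \delta/\log T$, so that $\zeta(s) M(s)$ is typically close to $1$. Then the identity $|\zeta|^{-2k} = |M|^{2k}/|\zeta M|^{2k}$ together with H\"older's inequality reduces matters to (i) a computable mean value of the Dirichlet polynomial $M$, which is expected to produce the factor $(\log T/\delta)^{k^2}$, and (ii) a sharp upper bound for a positive moment of $|\zeta M|$, accessible via the Heap--Radziwi\l\l--Soundararajan framework for sharp upper bounds of $\zeta$ moments or its variants. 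The optimal length $\theta$ and precise shape of the coefficients $a_k(n)$ (a smoothly truncated $k$-fold M\"obius-type convolution) must be chosen so that the two contributions combine to give the predicted order of magnitude.

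For the matching lower bound I would apply the dual Rudnick--Soundararajan strategy. Introduce an auxiliary Dirichlet polynomial $N(s)$ of length $T^\theta$ with coefficients designed so that, heuristically, $N(s) \approx \zeta(s)^{-k}$ on the shifted line. The Cauchy--Schwarz inequality
\[
\Big(\int_1^T |N(1/2+\delta/\log T + it)|^2 \, dt\Big)^{2} \leq \int_1^T |\zeta|^{-2k} \, dt \cdot \int_1^T |\zeta|^{2k} |N|^4 \, dt
\]
then reduces the problem to a second moment of $N$ on the left and a $2k$-th moment of $\zeta$ twisted by $|N|^4$ on the right, both of which are in principle tractable by standard mean-value machinery once the coefficients of $N$ are specified. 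The trichotomy in Conjecture~\ref{gonek_conj} at $k = 1/2$ corresponds exactly to the random-matrix prediction that $\mathrm{P}(|\zeta(1/2+it)| \leq x) \sim x^{2k_0}$ for small $x$, whose integral against $x^{-2k}$ converges for $k<1/2$, diverges logarithmically at $k=1/2$, and blows up polynomially for $k>1/2$; the shape of the answer for $0 < \delta \leq 1$ is the natural regularization obtained when the evaluation point is pushed off the critical line by distance $\delta/\log T$.

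The main obstacle is controlling the contribution of atypically small values of $\zeta$. Negative moments are dominated by the set of $t$ where $|\zeta(1/2+\delta/\log T + it)|$ is unusually small, and any rigorous argument must either rule out or quantify the measure of this exceptional set. Unconditionally, essentially no nontrivial lower bounds on $|\zeta|$ are available on the critical strip, and even under the Riemann Hypothesis the sharpest known control (following Soundararajan and Harper) is not strong enough to yield the sharp order of magnitude across the full range of parameters, particularly in the delicate crossover regime $k \geq 1/2$ with $\delta \to 0$. This is precisely the obstacle that the present paper circumvents in the function field setting: the $L$--functions in question are polynomials whose zeros admit direct Euler-product and geometric access, allowing the analogous negative-moment estimates to be carried through in the ranges stated in the abstract.
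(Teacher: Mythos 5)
This statement is Conjecture~\ref{gonek_conj}, attributed to Gonek, and the paper offers no proof of it — nor does it claim to. The text around the statement explicitly says that Gonek proved lower bounds of the right order (on RH) in certain ranges, but ``there has not been any progress on the corresponding upper bounds,'' and that the present paper only proves partial analogues of the conjecture in the function field setting (Theorem~\ref{theorem3} and its corollary). So there is no ``paper's own proof'' to compare against.

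Your proposal is therefore not in error so much as correctly self-aware: you lay out the standard mollifier strategy (a Dirichlet polynomial $M(s)$ approximating $\zeta(s)^{-k}$ together with H\"older for an upper bound, and the dual Cauchy--Schwarz inequality with an auxiliary polynomial $N(s)$ for a lower bound), and you then correctly identify why the argument cannot currently be closed: negative moments are dominated by the set where $\zeta$ is atypically small, and no available technology — unconditional or on RH, including the Soundararajan and Harper machinery for \emph{positive} moments — controls that set sharply enough, particularly in the regime $k \geq 1/2$ with $\delta \to 0$. This is precisely the state of affairs the paper describes. Your heuristic explanation of the $k = 1/2$ trichotomy via the small-value distribution of $|\zeta(1/2+it)|$ is also consistent with the random-matrix picture, though you should note the paper's additional remark that Berry--Keating and Forrester--Keating random-matrix computations predict further transition regimes at $k = (2n+1)/2$ that are not captured in the conjecture as stated. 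In short: you were asked to prove a conjecture, you did not (and cannot), and you said so for the right reasons; the only honest ``proof'' here is the paper's Theorem~\ref{theorem3}, which establishes an upper bound of (conjecturally) the right order in the function-field analogue for $\beta \gg g^{-1/(2km)+\varepsilon}$, via a very different sieve-theoretic / Soundararajan--Harper-style argument rather than mollification.
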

However, random matrix theory computations due to Berry and Keating \cite{berryk} and Forrester and Keating \cite{fk} suggest extra transition regimes in the conjecture above when $0<\delta \leq 1$ and $k=(2n+1)/2$ for $n$ a positive integer.
While obtaining lower bounds is a more tractable problem (Gonek \cite{gonek} proved lower bounds of the right order of magnitude in certain ranges for $k$ and $\delta$ on the Riemann Hypothesis), there has not been any progress on the corresponding upper bounds. We will prove partial results towards the analogue of this conjecture in the function field setting. 


Over function fields, Andrade and Keating \cite{ak} adapted the ``recipe''  to make conjectures for ratios of products of $L$--functions associated to quadratic characters over $\mathbb{F}_q[x]$. We will explicitly write down the conjecture in the case of one $L$--function over one $L$--function. Let $\mathcal{H}_{2g+1}$ denote the ensemble of monic, square-free polynomials of degree $2g+1$ over $\mathbb{F}_q[x]$, and let $\chi_D$ denote the quadratic character of modulus $D$. 
For 
\begin{equation} 
| \Re \alpha| < \frac{1}{4}  , \, \, \, \frac{1}{g} \ll \Re \beta< \frac{1}{4},
\label{condrc}
\end{equation} we expect
\begin{align*}
&\frac{1}{|\mathcal{H}_{2g+1}|}  \sum_{D \in \mathcal{H}_{2g+1}} \frac{ L(1/2+\alpha,\chi_D)}{L(1/2+\beta,\chi_D)}\sim A(\alpha,\beta) \frac{\zeta_q(1+2\alpha)}{\zeta_q(1+\alpha+\beta)} + q^{-2g \alpha}A(-\alpha,\beta)  \frac{\zeta_q(1-2\alpha)}{\zeta_q(1-\alpha+\beta)}
\end{align*}
with some power saving error term, where $\zeta_q$ denotes the zeta-function over $\mathbb{F}_q[x]$ and 
$$A(\alpha,\beta) = \prod_{P\in\mathcal{P}} \bigg(1-\frac{1}{|P|^{1+\alpha+\beta}} \bigg)^{-1} \bigg( 1-\frac{1}{|P|^{\alpha+\beta}(|P|+1)}-\frac{1}{|P|^{1+2\alpha}(|P|+1)} \bigg).$$
Here and throughout the paper, we denote $\mathcal{P}$ to be the ensemble of monic, irreducible polynomials.

More generally, let ${\bf A}=\{\alpha_1,\ldots,\alpha_k\}$ and ${\bf B}=\{\beta_1,\ldots,\beta_k\}$. We are interested in
\begin{equation}
\frac{1}{|\mathcal{H}_{2g+1}|}\sum_{D\in \mathcal{H}_{2g+1}}\frac{\prod_{j=1}^kL(1/2+\alpha_j,\chi_D)}{\prod_{j=1}^kL(1/2+\beta_j,\chi_D)},
\label{to_study}
\end{equation}
where
\begin{equation} |\Re \alpha_j| < \frac{1}{4} , \, \, \frac{1}{g} \ll \Re \beta_j < \frac{1}{4}, \, \, \text{ for } 1\leq j\leq k.
\label{cond_param}
\end{equation}
We note that the conditions on the real parts of the parameters ensure convergence of the Euler products expected in the asymptotic formulas, but it is possible to formulate the conjecture in a wider range, as long as the Euler products are convergent.

If ${\bf C}=\{\gamma_1,\ldots,\gamma_k\}$, then we denote
\[
{\bf C}^-=\{-\alpha:\alpha\in {\bf C}\},\qquad q^{-2g\bf C}=q^{-2g\sum_{j=1}^k\gamma_j},
\]
\begin{equation}
\mu_{\bf C}(f)=\prod_{f=f_1\ldots f_k}\frac{\mu(f_1)\ldots\mu(f_k)}{|f_1|^{\gamma_1}\ldots |f_k|^{\gamma_k}}\qquad\text{and}\qquad\tau_{\bf C}(f)=\prod_{f=f_1\ldots f_k}\frac{1}{|f_1|^{\gamma_1}\ldots |f_k|^{\gamma_k}}, \label{notation}
\end{equation}
where $\mu(f)$ is the M\"{o}bius function over $\mathbb{F}_q[x]$. We rewrite the Ratios Conjecture in the following form.
\begin{conjecture}[Ratios Conjecture]\label{ratios}
Under the constraints \eqref{cond_param}, we have
\begin{align*}
&\frac{1}{|\mathcal{H}_{2g+1}|}\sum_{D\in \mathcal{H}_{2g+1}}\frac{\prod_{j=1}^kL(1/2+\alpha_j,\chi_D)}{\prod_{j=1}^kL(1/2+\beta_j,\chi_D)}\sim\sum_{\bf R\subset\bf A}q^{-2g\bf R}\mathcal{S}_{\bf (A\backslash\bf R)\cup\bf R^-}
\end{align*}
with some power saving error term, where  if ${\bf C}=\{\gamma_1,\ldots,\gamma_k\}$, then
\begin{align*}
&\mathcal{S}_{\bf C}=\frac{\prod_{1\leq i\leq j\leq k}\zeta_q(1+\gamma_i+\gamma_j)\prod_{1\leq i< j\leq k}\zeta_q(1+\beta_i+\beta_j)}{\prod_{1\leq i, j\leq k}\zeta_q(1+\beta_i+\gamma_j)}\\
&\quad\times\prod_{P\in\mathcal{P}}\prod_{1\leq i\leq j\leq k}\bigg(1-\frac{1}{|P|^{1+\gamma_i+\gamma_j}}\bigg)\prod_{1\leq i< j\leq k}\bigg(1-\frac{1}{|P|^{1+\beta_i+\beta_j}}\bigg)\prod_{1\leq i, j\leq k}\bigg(1-\frac{1}{|P|^{1+\beta_i+\gamma_j}}\bigg)^{-1}\\
&\quad\times\prod_{P\in\mathcal{P}}\bigg(1+\bigg(1+\frac{1}{|P|}\bigg)^{-1}\sum_{i+j\geq 2\ even}\frac{\mu_{\mathbf{B}}(P^i)\tau_{\mathbf{C}}(P^j)}{|P|^{(i+j)/2}}\bigg).
\end{align*}
\end{conjecture}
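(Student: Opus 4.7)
The plan is to combine an approximate functional equation for the numerator with a truncated Dirichlet series representation of the reciprocal of the denominator, swap the order of summation, evaluate character sums averaged over $D\in\mathcal{H}_{2g+1}$, and match the resulting main terms to $\mathcal{S}_{(\mathbf{A}\setminus\mathbf{R})\cup\mathbf{R}^-}$.

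Concretely, I would begin by applying the functional equation to each $L(1/2+\alpha_j,\chi_D)$ in order to express the product $\prod_{j=1}^k L(1/2+\alpha_j,\chi_D)$ as a sum over subsets $\mathbf{R}\subset\mathbf{A}$, each summand being a product of short Dirichlet polynomials together with the gamma-type factors coming from the functional equation; the choice of $\mathbf{R}$ records which of the $\alpha_j$ have been reflected and produces the prefactors $q^{-2g\mathbf{R}}$. For the denominator I would use the formal identity
\[
\prod_{j=1}^k L(1/2+\beta_j,\chi_D)^{-1} = \sum_{f}\frac{\mu_{\mathbf{B}}(f)\chi_D(f)}{|f|^{1/2}},
\]
truncated at some length $|f|\le q^{\eta g}$ to be optimised. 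After substitution the average over $D$ reduces, for each pair $(f,h)$ of monic polynomials, to a character sum $\sum_{D\in\mathcal{H}_{2g+1}}\chi_D(fh)$. When $fh$ is a perfect square this yields the diagonal contribution; otherwise one applies the Poisson summation formula for $\chi_D$ over $\mathbb{F}_q[x]$ (as used by Florea) to convert it into a dual sum of length roughly $q^g/|fh|$. Comparing Euler factors prime by prime, the diagonal and dual contributions should assemble into the predicted $\zeta_q$-ratios and the local arithmetic factor in the last line of $\mathcal{S}_{\mathbf{C}}$, with the outer sum over $\mathbf{R}$ arising from the $2^k$ ways of splitting the numerator.

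The principal obstacle is controlling the tail from truncating the reciprocal Dirichlet series at $|f|=q^{\eta g}$. This error is bounded by
\[
\frac{1}{|\mathcal{H}_{2g+1}|}\sum_{D\in\mathcal{H}_{2g+1}}\prod_{j=1}^k |L(1/2+\alpha_j,\chi_D)|\cdot\Bigl|\sum_{|f|>q^{\eta g}}\frac{\mu_{\mathbf{B}}(f)\chi_D(f)}{|f|^{1/2}}\Bigr|,
\]
and after Cauchy--Schwarz it demands sharp upper bounds for the negative $2k$-th moment $\sum_D\prod_j|L(1/2+\beta_j,\chi_D)|^{-2}$ on average, at shifts as close to the critical line as possible. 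These negative moment bounds are the core technical input, analogous in difficulty to the Gonek conjecture recalled earlier, and the scale at which one can prove them is precisely what limits the result to a range like $\operatorname{Re}\beta_j\gg g^{-1/(2k)+\varepsilon}$ for the $k$-over-$k$ ratio; the rest of the argument (functional equation bookkeeping, Poisson summation, and Euler product matching) is standard but laborious. Accordingly, I would expect to obtain Conjecture 1.2 unconditionally only in those ranges where the corresponding negative moment bound can be established, and to identify the negative moment estimate as the main theorem driving the whole scheme.
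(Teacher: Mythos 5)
Your proposal matches the paper's strategy in every essential respect: truncate the M\"{o}bius expansion of the denominator, evaluate the short piece via shifted moments of $\prod_j L(\tfrac12+\alpha_j,\chi_D)$ twisted by $\chi_D(\prod h_j)$ (the paper's Theorem~\ref{theorem2}), and control the tail by combining a moment-type inequality with upper bounds for negative moments (the paper's Theorem~\ref{theorem3}, which you correctly single out as the load-bearing result). Two small points of divergence are worth flagging. First, the paper does not apply Cauchy--Schwarz to the tail but rather H\"{o}lder with the asymmetric exponents $\tfrac{1+\varepsilon}{\varepsilon}$ and $1+\varepsilon$; this lets it invoke the negative-moment bound with $m=1+\varepsilon$ rather than $m=2$, and since Theorem~\ref{theorem3} requires $\beta\gg g^{-1/(2km)+\varepsilon}$, plain Cauchy--Schwarz would only reach $\beta\gg g^{-1/(4k)+\varepsilon}$ rather than the stated $g^{-1/(2k)+\varepsilon}$. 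Second, the sum over $\mathbf{R}\subset\mathbf{A}$ does not come from a $2^k$-term approximate functional equation for the numerator --- the paper's Lemma~\ref{fe} has only the two pieces $S_{\mathbf{A}}$ and $q^{-2g\mathbf{A}}S_{\mathbf{A}^-}$; the $2^k$ terms emerge from collecting residues of the relevant generating functions (from $M_{\mathbf{C}}$ and from $S^{\mathrm{e}}_{\mathbf{C}}(h;N;V=\square)$) and cancelling cross terms, which is where most of the bookkeeping of Section~\ref{mainprop} lives. Neither issue changes the overall architecture, but the first does change the quantitative outcome.
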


We will prove the conjecture above for $|\bf{A}| =|\bf{B}|$$\ \leq 3$, in certain ranges of the parameters. The challenge in evaluating \eqref{to_study} lies in being able to choose the shifts in the denominator as small as possible. The smaller the shifts in the denominator are, the closer we are to possible zeros of the $L$--functions in the denominator, and hence the harder the problem gets. More precisely, we will prove the following result.

\begin{theorem}\label{theorem1}
Let $0<\Re \beta_j<1/2 $ for $1\leq j\leq k$. We denote $\alpha= \max \{|\Re \alpha_1|,\ldots, |\Re \alpha_k| \}$ and $\beta=\min \{\Re \beta_1,\ldots, \Re \beta_k\}$. Then Conjecture \ref{ratios} holds
for $1\leq k\leq 3$ with the error term $E_k$, where
\begin{align}\label{first_part}
E_1\ll_\varepsilon
\begin{cases}
q^{-g \beta( 3+2  \alpha)+\varepsilon g\beta} & \mbox{ if } 0 \leq \Re\alpha_1 <1/2  \mbox{ and }\ \beta\gg g^{-1/2+\varepsilon}, \\
q^{-g  \beta( 3-4  \alpha)+\varepsilon g\beta} & \mbox{ if } -1/2<\Re\alpha_1<0\ \mbox{and}\ \beta\gg g^{-1/2+\varepsilon},
\end{cases} 
\end{align}
and
\begin{align}
E_2&\ll_\varepsilon q^{-g \beta \min \{ \frac{1-4\alpha}{1+\beta} , \frac{1-2\alpha}{2+\beta} \}+\varepsilon g\beta} & \mbox{ if } \alpha< 1/4 \text{ and } \beta\gg g^{-1/4+\varepsilon}, \label{second_part}\\
E_3&\ll_\varepsilon q^{-g \beta \min \{ \frac{1/4-4\alpha}{\beta} , \frac{1/2-4\alpha}{3+\beta} \}+\varepsilon g\beta} & \mbox{ if } \alpha< 1/16\ \mbox{and}\ \beta\gg g^{-1/6+\varepsilon}. \label{third_part}
\end{align}
\end{theorem}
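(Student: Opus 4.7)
The plan is to reduce the problem to upper bounds for negative moments of $L$-functions in the family, and then combine these with the ``recipe'' heuristic to extract the main terms. The starting point is the M\"obius expansion
$$\frac{1}{\prod_{j=1}^k L(1/2+\beta_j,\chi_D)} = \sum_{f_1,\dots,f_k} \frac{\mu(f_1)\cdots\mu(f_k)\chi_D(f_1\cdots f_k)}{|f_1|^{1/2+\beta_1}\cdots|f_k|^{1/2+\beta_k}},$$
which I would truncate at total degree $\sum_j \deg f_j \leq Y$ for a parameter $Y$ to be optimized. Simultaneously, I would expand $\prod_{j=1}^k L(1/2+\alpha_j,\chi_D)$ via the approximate functional equation as a main sum of total degree at most $kg$ plus a dual sum carrying the factor $q^{-2g\sum_j\alpha_j}$; iterating this over subsets of $\mathbf{A}$ yields $2^k$ pieces, one for each $\mathbf{R}\subset\mathbf{A}$.

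For the main term, I would substitute both expansions into the ratio, multiply out, and average over $D\in\mathcal{H}_{2g+1}$. The resulting character sum $\sum_{D}\chi_D(fh)$ over squarefree $D$ is evaluated via Poisson summation in the function field setting, with the diagonal (square $fh$) dominating. A residue computation in the spirit of Conrey-Farmer-Keating-Rubinstein-Snaith then assembles the diagonal contributions from each of the $2^k$ pieces of the approximate functional equation into the main terms $\sum_{\mathbf{R}\subset\mathbf{A}}q^{-2g\mathbf{R}}\mathcal{S}_{(\mathbf{A}\setminus\mathbf{R})\cup\mathbf{R}^-}$ predicted in Conjecture \ref{ratios}; the off-diagonal character sums yield a power-saving contribution via square-root cancellation.

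The crux is the error from truncating the M\"obius expansion. By Cauchy-Schwarz the tail contribution is at most
$$\Bigg(\frac{1}{|\mathcal{H}_{2g+1}|}\sum_{D}\prod_{j=1}^k|L(1/2+\alpha_j,\chi_D)|^2\Bigg)^{1/2}\Bigg(\frac{1}{|\mathcal{H}_{2g+1}|}\sum_{D}|\text{tail}_D|^2\Bigg)^{1/2},$$
and the tail factor is ultimately controlled by the $2k$-th negative moment
$$\frac{1}{|\mathcal{H}_{2g+1}|}\sum_{D\in\mathcal{H}_{2g+1}}\prod_{j=1}^k|L(1/2+\beta_j,\chi_D)|^{-2}.$$
Establishing random-matrix-strength upper bounds for this negative moment is the main technical obstacle. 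I would attempt this via a function-field adaptation of the Soundararajan-Harper method: use the explicit formula (with the Riemann Hypothesis of Weil) to approximate $\log L$ by a short Dirichlet polynomial over primes, exponentiate and bound high moments, and finally obtain a Dirichlet polynomial approximation to $|L|^{-2k}$ amenable to a second-moment computation on the family $\mathcal{H}_{2g+1}$. Such methods are effective only when the shift is not too small, and the threshold $\beta\gg g^{-1/(2k)+\varepsilon}$ -- exactly the range appearing in Theorem \ref{theorem1} -- corresponds to the natural limit of this technique.

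Finally, I would optimize the truncation length $Y$ to balance the main-term correction against the tail bound. Tracking the dependence on $\alpha$ and $\beta$ in the arithmetic factors that emerge from the residue calculation, together with the negative-moment input, produces the explicit error exponents in \eqref{first_part}, \eqref{second_part} and \eqref{third_part}. The different behavior for $\Re\alpha_1\geq 0$ vs.\ $\Re\alpha_1<0$ in the $k=1$ case reflects whether the mean square of $L(1/2+\alpha,\chi_D)$ directly benefits from the shift or is instead transformed by the functional equation before it can be bounded effectively.
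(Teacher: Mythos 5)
Your overall strategy (truncate the M\"obius series for $1/\prod L(1/2+\beta_j)$, treat the short part as a twisted shifted moment, and control the tail through upper bounds for negative moments of $L$-functions) is the right one and matches the paper's broad outline, but the crucial step linking the tail to negative moments is missing, and a secondary choice loses you some of the stated range of $\beta$.

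The gap is in the passage from ``tail of the truncated Dirichlet series'' to ``negative moment of $L$.'' After truncation, the tail $\mathrm{tail}_D$ is a Dirichlet polynomial $\sum_{h_1,\ldots,h_k}\mu(h_1)\cdots\mu(h_k)\chi_D(h_1\cdots h_k)/|h_1|^{1/2+\beta_1}\cdots$ with at least one $d(h_j)$ large; it is \emph{not} the reciprocal of a product of $L$-functions. Cauchy--Schwarz gives you a second moment of $\mathrm{tail}_D$, but there is no direct way to bound $\sum_D |\mathrm{tail}_D|^2$ by a negative moment of $L$-functions without further input. The paper resolves this via Perron's formula: for the piece with $d(h_1)>X$, one writes
\begin{align*}
S_{k,>X,1}=\frac{1}{|\mathcal{H}_{2g+1}|}\frac{1}{2\pi i}\oint_{|z_1|=q^{(1-\varepsilon)\Re\beta_1}}\sum_{D}\frac{\prod_{j}L(1/2+\alpha_j,\chi_D)}{\mathcal{L}\big(\tfrac{z_1}{q^{1/2+\beta_1}},\chi_D\big)\prod_{j\ge2}L(1/2+\beta_j,\chi_D)}\,\frac{dz_1}{z_1^{X+1}(z_1-1)},
\end{align*}
which replaces the truncated series by a genuine $L$-function $\mathcal{L}(z_1 q^{-1/2-\beta_1},\chi_D)$ in the denominator, evaluated at a slightly smaller real part; only then can one invoke a negative-moment bound. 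You need an analogue of this step, and simply asserting the tail is ``ultimately controlled by the $2k$-th negative moment'' does not supply it.

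A secondary issue is the choice of Cauchy--Schwarz rather than H\"older. With Cauchy--Schwarz you are forced to take the exponent $m=2$ in the negative-moment bound (Theorem \ref{theorem3}), whose hypothesis is $\beta\gg g^{-1/(2km)+\varepsilon}=g^{-1/(4k)+\varepsilon}$, a strictly narrower range than the $\beta\gg g^{-1/(2k)+\varepsilon}$ in the statement; your claim that the Cauchy--Schwarz threshold is ``exactly the range appearing in Theorem \ref{theorem1}'' is therefore not consistent. The paper instead applies H\"older with exponents $(1+\varepsilon)/\varepsilon$ on the numerator (high positive moment, handled by known mean-value results for the family) and $1+\varepsilon$ on the denominator, so that Theorem \ref{theorem3} is used with $m=1+\varepsilon$ and the threshold becomes $g^{-1/(2k(1+\varepsilon))+\varepsilon}$, recovering the claimed range in the limit $\varepsilon\to 0$. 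Your sketch of the negative-moment bound itself (explicit formula plus a Soundararajan--Harper-style splitting of the prime sum) is also a fair approximation of the paper's Section 6, though the paper additionally relies on an optimal Beurling--Selberg-type majorant (in the Carneiro--Littmann--Vaaler framework) rather than just the naive explicit formula to get the sharp constant in the lower bound for $\log|L|$.
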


\kommentar{\begin{theorem}\label{theorem1}
Let $|\Re \alpha_j| < 1$ and $0<\Re \beta_j < 1$ for $1\leq j\leq k$. We denote $\alpha= \max \{|\Re \alpha_1|,\ldots, |\Re \alpha_k| \}$ and $\beta=\min \{\Re \beta_1,\ldots, \Re \beta_k\}$. Then Conjecture \ref{ratios} holds
for $1\leq k\leq 3$ with the error term $E_k$, where
\begin{align}\label{first_part}
E_1\ll_\varepsilon
\begin{cases}
q^{-g \beta( 3+2  \alpha)+\varepsilon g\beta} & \mbox{ if } \alpha_1 \geq 0\ \mbox{and}\ \beta\gg g^{-1/2+\varepsilon}, \\
q^{-g  \beta( 3-4  \alpha)+\varepsilon g\beta} & \mbox{ if } \alpha_1<0\ \mbox{and}\ \beta\gg g^{-1/2+\varepsilon},
\end{cases} 
\end{align}
and
\begin{align}
E_2&\ll_\varepsilon q^{-g \beta \min \{ \frac{1-4\alpha}{1+\beta} , \frac{1-2\alpha}{2+\beta} \}+\varepsilon g\beta}\quad \mbox{ if } \alpha< 1/4\ \mbox{and}\ \beta\gg g^{-1/4+\varepsilon}, \label{second_part}\\
E_3&\ll_\varepsilon q^{-g \beta \min \{ \frac{1/4-4\alpha}{\beta} , \frac{1/2-4\alpha}{3+\beta} \}+\varepsilon g\beta}\quad \mbox{ if } \alpha< 1/16\ \mbox{and}\ \beta\gg g^{-1/6+\varepsilon}. \label{third_part}
\end{align}
\end{theorem}
}
To evaluate \eqref{to_study}, we write the $L$--functions in the denominator as Dirichlet series involving the M\"{o}bius function, and then truncate the Dirichlet series thus obtained. We will write the first piece of the series as 
 \begin{equation}
\sum_{d(h_1), \ldots, d(h_k) \leq X } \frac{\prod_{j=1}^k \mu(h_j)}{ \prod_{j=1}^k |h_j|^{1/2+\beta_j}}  \sum_{D \in \mathcal{H}_{2g+1}} \bigg(\prod_{j=1}^kL(\tfrac12+\alpha_j,\chi_D)\bigg)   \chi_D\bigg(\prod_{j=1}^k h_j\bigg) ,
 \label{initial1}
 \end{equation}
for some parameter $X$, where $d(h)$ denotes the degree of the polynomial $h$, and we will prove asymptotic formulas for twisted, shifted moments of $L$--functions, generalizing the work in \cite{BF}. This will be the content of Theorem \ref{theorem2}. We note that we could improve the range of the parameters $\alpha_j$ in the statement of Theorem \ref{theorem1} for $k=2,3$ by keeping the error terms in Theorem \ref{theorem2} explicit and then making use of the cancellation provided by the M\"{o}bius function in the formula above. We have decided not to do that to keep the paper at a reasonable length, and to focus instead on the more difficult task of evaluating the  second piece in the Dirichlet series, corresponding to at least one of the polynomials $h_j$ in \eqref{initial1} having degree bigger than $X$.  

 We express the second piece in terms of an integrated ratio of $L$--functions, and then prove upper bounds for negative moments of $L$--functions. Our proof builds on work of Soundararajan \cite{sound_ub} who proved almost sharp upper bounds for the positive moments of the Riemann zeta-function conditional on the Riemann Hypothesis, and of Harper \cite{harper} who refined the method, obtaining sharp upper bounds for the positive moments. However, no upper bounds are known for the negative moments.

In our work, in certain ranges of the parameters, the upper bounds obtained are expected to be sharp (up to a log factor), partially proving the analogue of Gonek's Conjecture \ref{gonek_conj} in this setting. Obtaining upper bounds for negative moments of $L$--functions relies on using some sieve-theoretic inspired ideas which have been recently successfully used in a number of settings \cite{sound_maks, lester_radziwill, DFL, heap_sound, BELP}. As far as we are aware, our work provides the first upper bounds for negative moments of $L$--functions, when the shift in the $L$--function goes to zero. In forthcoming work \cite{flub}, upper bounds for negative moments of $L$--functions with shifts smaller than those considered in this paper are obtained.

Proving Theorem \ref{theorem1} relies on proving the following two results, which might be of independent interest.

\begin{theorem}\label{theorem2}
 Let  $h=h_1h_2^2$ with $d(h)\ll g$ and $h_1$ a square-free monic polynomial. For $\alpha=\max \{|\Re\alpha_1|,\ldots,|\Re\alpha_k|\}<1/2$  we have
\begin{align*}
&\frac{1}{|\mathcal{H}_{2g+1}|}\sum_{D\in \mathcal{H}_{2g+1}}\bigg(\prod_{j=1}^kL(\tfrac 12+\alpha_j,\chi_D)\bigg)\chi_D(h)=\frac{1}{\sqrt{|h_1|}}\sum_{\bf R\subset\bf A}q^{-2g\bf R}\widetilde{\mathcal{S}}_{\bf (A\backslash\bf R)\cup\bf R^-}(h)+\widetilde{E}_k.
\end{align*}
Here if ${\bf C}=\{\gamma_1,\ldots,\gamma_k\}$, then
\[
\widetilde{\mathcal{S}}_{\bf C}(h)=\mathcal{A}_{{\bf C}}(1)\mathcal{B}_{{\bf C}}(h;1)\prod_{1\leq i\leq j\leq k}\zeta_q(1+\gamma_i+\gamma_j),
\]
where
\begin{align}\label{formulaA}
\mathcal{A}_{{\bf C}}(u)&=\prod_{P\in\mathcal{P}}\prod_{1\leq i\leq j\leq k}\bigg(1-\frac{u^{2d(P)}}{|P|^{1+\gamma_i+\gamma_j}}\bigg)\prod_{P\in\mathcal{P}}\bigg(1+\bigg(1+\frac{1}{|P|}\bigg)^{-1}\sum_{j=1}^{\infty}\frac{\tau_{\bf C}(P^{2j})}{|P|^j}u^{2jd(P)}\bigg)
\end{align}
and
\begin{align}\label{formulaB}
\mathcal{B}_{{\bf C}}(h;u)&=\prod_{P|h}\bigg(1+\frac{1}{|P|}+\sum_{j=1}^{\infty}\frac{\tau_{\bf C}(P^{2j})}{|P|^j}u^{2jd(P)}\bigg)^{-1}\nonumber\\
&\qquad\quad\times\prod_{P|h_1}\bigg(\sum_{j=0}^{\infty}\frac{\tau_{\bf C}(P^{2j+1})}{|P|^j}u^{2jd(P)}\bigg)\prod_{\substack{P\nmid h_1\\P|h_2}}\bigg(\sum_{j=0}^{\infty}\frac{\tau_{\bf C}(P^{2j})}{|P|^j}u^{2jd(P)}\bigg).
\end{align}
Also,
\begin{align}\label{formulaE1}
\widetilde{E}_1&=E_{\alpha_1}(h;g)+q^{-2g\alpha_1}E_{-\alpha_1}(h;g-1)\nonumber\\
&\qquad\qquad+O_\varepsilon\big( |h|^{1/2}q^{-(3/2-\alpha)g+\varepsilon g}\big)+O_\varepsilon\big(|h_1|^{1/4}q^{-(3/2-2\alpha) g+\varepsilon g}\big),
\end{align}
where $E_{\gamma_1}(h;N)$ is given explicitly in \eqref{3002} and satisfies
\[
E_{\gamma_1}(h;N)\asymp |h_1|^{1/6}q^{-4g/3-g\Re\gamma_1+\varepsilon}+|h_1|^{1/6+\Re\gamma_1/3}q^{-4g/3-2g\Re\gamma_1/3+\varepsilon}
\]
in particular, and
\begin{align}\label{formulaE2}
\begin{cases}
\widetilde{E}_2\ll_\varepsilon |h|^{1/2}q^{-(1-2\alpha) g+\varepsilon g}+q^{-(1-4\alpha) g+\varepsilon g},\\
\widetilde{E}_3\ll_\varepsilon |h|^{1/2}q^{-(1/2-4\alpha) g+\varepsilon g}+q^{-(1-6\alpha) g+\varepsilon g}+|h_{1}|^{-3/4}q^{-(1/4-4\alpha) g+\varepsilon g}.
\end{cases}
\end{align}
\end{theorem}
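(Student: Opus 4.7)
My plan is to apply the exact functional equation to each factor $L(1/2+\alpha_j,\chi_D)$, writing it as a sum of two truncated Dirichlet polynomials: a ``primary'' piece of length $\sim g$ with shift $+\alpha_j$ and a ``dual'' piece of the same length with shift $-\alpha_j$ carrying a prefactor $q^{-2g\alpha_j}$. Multiplying these $k$ expressions produces $2^k$ terms indexed by the subset $\mathbf{R}\subset\mathbf{A}$ of factors taken in dual form; each such term automatically carries the $q^{-2g\mathbf{R}}$ prefactor featured in the statement. After interchanging summations, the task reduces to evaluating, for each tuple $(f_1,\ldots,f_k)$ of monic polynomials of degree $\lesssim g$, the twisted character average
\[
\frac{1}{|\mathcal{H}_{2g+1}|}\sum_{D\in\mathcal{H}_{2g+1}}\chi_D\Bigl(h\prod_{j=1}^{k}f_j\Bigr).
\]

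For this average I would invoke the Poisson-type summation formula over $\mathbb{F}_q[x]$ developed in \cite{BF}. It decomposes the sum into a ``diagonal'' contribution, nonzero only when $h\prod_j f_j$ is a perfect square up to its cube-free part, plus a ``dual'' sum of length $\asymp q^g$ weighted by Gauss-type factors of size $q^{g/2}$. Reassembling the diagonal contribution back into the Dirichlet series over $f_1,\ldots,f_k$ and factoring prime by prime gives the main term. Writing each $f_j = f_j^{\flat}\ell_j^2$ with $f_j^{\flat}$ square-free, the square constraint forces the $f_j^{\flat}$ to cancel against the square-free part $h_1$ of $h$ and to pair up among themselves locally; the unrestricted ``diagonal'' pairings supply the zeta factors $\prod_{i\leq j}\zeta_q(1+\gamma_i+\gamma_j)$ with $\mathbf{C}=(\mathbf{A}\setminus\mathbf{R})\cup\mathbf{R}^-$, the local constraints at primes $P\nmid h$ produce precisely the Euler factors in $\mathcal{A}_{\mathbf{C}}(1)$, and the two branches at primes $P\mid h_1$ versus $P\mid h_2$, $P\nmid h_1$ generate the two products defining $\mathcal{B}_{\mathbf{C}}(h;1)$. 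The weight $(1+1/|P|)^{-1}$ appearing in $\mathcal{A}_{\mathbf{C}}$ reflects the local density of square-free $D$ at $P$.

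The main difficulty, which dictates the shape of the error terms $\widetilde{E}_k$, lies in bounding the dual piece of Poisson. For $k=1$ the analysis follows \cite{BF} almost verbatim, with careful tracking of the shift $\alpha_1$ dependence to produce the explicit function $E_{\alpha_1}(h;g)$ of \eqref{formulaE1}; the two $O$-terms there arise from estimating the Gauss-sum moments by Weil bounds and trivially on the remaining divisor-type coefficients. For $k=2,3$ the dual sum now runs over two or three variables, and the cross-interaction of the shifts $\alpha_j$ has to be controlled uniformly; here I would alternate between trivial estimation (when the prefactor $q^{-2g\mathbf{R}}$ already absorbs the loss) and square-root cancellation in the Gauss sums (when it does not), balancing the length of each dual sum against the trivial bound $|f_j|^{-1/2+\alpha}$ on its coefficient. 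The hardest step is expected to be $k=3$: recovering the factor $|h_{1}|^{-3/4}$ in the last term of $\widetilde{E}_3$ requires a more delicate use of the cube-free factorization of $h f_1 f_2 f_3$ than the coarser pairing argument that suffices for $k=1,2$, by separating the contribution in which a single $f_j$ absorbs the square-free part $h_1$ from the contribution in which the pairing is spread across all three $f_j$.
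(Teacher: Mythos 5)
Your proposal diverges from the paper's argument in two structural ways, and the second one is a genuine gap.

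First, a difference of organization rather than a mistake: you apply the exact functional equation factor-by-factor, obtaining $2^k$ products of short (length-$g$) Dirichlet polynomials, each tagged by $\mathbf{R}\subset\mathbf{A}$. The paper instead uses Lemma~\ref{fe}, which applies the functional equation once to the full product $\prod_j\mathcal{L}(q^{-\alpha_j}u,\chi_D)$ and yields only two pieces, each a single sum of length $\sim kg$ with the full $k$-fold convolution coefficient $\tau_{\mathbf{A}}(f)$ (respectively $\tau_{\mathbf{A}^-}(f)$). These are genuinely different Dirichlet-polynomial representations of the same object: your version carries $k$ separate truncations $d(f_j)\leq g$, whose boundary terms are entangled across variables and are much harder to control via a single Perron contour. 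The paper's choice is not cosmetic — it makes the subsequent multi-variable Perron analysis and contour shifts tractable.

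The more serious gap is your claim that ``reassembling the diagonal contribution back into the Dirichlet series $\ldots$ gives the main term,'' with the $V=\square$ Gauss-sum contribution relegated to the error. This is false, and is the crux of the proof. Even for $k=1$, the diagonal (the $V=0$ piece, $M_{\mathbf{C}}(h;N)$ in the paper's notation) produces, after Perron and a contour shift, not only the residue $\widetilde{\mathcal{S}}_{\mathbf{C}}(h)/\sqrt{|h_1|}$ at $u=1$ but also a secondary residue $M^{1,1}_{\mathbf{C}}(h;N)$ at $u=\pm q^{\gamma_1}$; and the $V=\square$ contribution produces a residue at $w=q^{\gamma_1}$ (the term $S^{\mathrm{e};1}_{\gamma_1}(h;N)$ of \eqref{secondmaintermk=1}) of exactly the same order. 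It is only through the identity established in \eqref{3000}--\eqref{3001}, where the diagonal secondary residue of one piece cancels against the $V=\square$ residue of the dual piece, that the clean expression $\sum_{\mathbf{R}}q^{-2g\mathbf{R}}\widetilde{\mathcal{S}}_{(\mathbf{A}\setminus\mathbf{R})\cup\mathbf{R}^-}$ emerges. For $k=2,3$ the $V=\square$ residues at $w=q^{\gamma_i}$, $1\leq i\leq k$, supply an entire block of main-term residues that the diagonal alone does not produce (\eqref{toverify}, \eqref{combine2}--\eqref{combine4}). Treating $V=\square$ as part of the dual error would leave these out and the theorem's main term would be unaccounted for. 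Relatedly, the $|h_1|^{-3/4}$ in $\widetilde{E}_3$ does not come from a more refined cube-free pairing argument as you suggest, but from the trivial bound on the shifted $u$-contour $|u|=q^{1/2+2\min\Re\gamma_j-2\Re\gamma_1-2\varepsilon}$ in the $V=\square$ analysis (the display preceding \eqref{6001}).
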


\begin{theorem}\label{theorem3}
Let $k$ be a positive integer and $m>0$ such that $2km >1$. Let $0<\Re \beta_j <1/2$ for $1\leq j\leq k$. For $\beta=\min \{\beta_1,\ldots,\beta_k\} \gg    g^{ -\frac{1}{2 km}+\varepsilon} $, we have
\begin{align*}
\frac{1}{|\mathcal{H}_{2g+1}|} & \sum_{D\in \mathcal{H}_{2g+1}}  \prod_{j=1}^k \frac{1}{ | L(1/2+\beta_j+it_j,\chi_D)|^m}\\
& \ll 
  \Big( \frac{1}{\beta} \Big)^{k^2m^2/2} \prod_{j=1}^k \min \Big\{ \frac{1}{\beta_j}, \frac{1}{\overline{t_j}}\Big\}^{-m/2}   (\log g)^{km(km+1)/2},
\end{align*}
where $\overline{t} = \min \{ t \bmod {2 \pi}, 2 \pi - (t \bmod {2 \pi})\} .$ 
\end{theorem}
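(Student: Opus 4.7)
\textbf{Proof plan for Theorem \ref{theorem3}.} The approach adapts the Soundararajan--Harper upper-bound method for moments of $L$-functions to the setting of negative moments, in the spirit of the sieve-theoretic arguments referenced in the introduction. The starting point is the Riemann Hypothesis for function field $L$-functions: $L(s,\chi_D)=\prod_{i=1}^{2g}(1-\alpha_{i,D}q^{-s})$ with $|\alpha_{i,D}|=\sqrt{q}$. Hence for $s=1/2+\beta+it$ with $\beta>0$,
\[
\log|L(s,\chi_D)|=\Re\sum_{f\text{ prime power}}\frac{\Lambda(f)\chi_D(f)}{d(f)|f|^s},
\]
with the tail from $d(f)>N$ bounded by $\ll g\,q^{-N\beta}$; choosing $N\asymp(\log g)/\beta$ makes this $O(1)$.

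\emph{Dyadic sieve.} Partition $[1,N]$ into subintervals $(N_{r-1},N_r]$ for $1\le r\le R$, and define the short Dirichlet polynomials
\[
P_r^{(j)}(D)=\sum_{\substack{f\text{ prime power}\\d(f)\in(N_{r-1},N_r]}}\frac{\Lambda(f)\chi_D(f)}{d(f)|f|^{1/2+\beta_j+it_j}},
\]
so that $\sum_r\Re P_r^{(j)}(D)=\log|L(1/2+\beta_j+it_j,\chi_D)|+O(1)$. For thresholds $V_r$ to be chosen, set $\mathcal{A}_r=\{\max_j|P_r^{(j)}(D)|\le V_r\}$, and use the Harper-style telescoping identity
\[
\mathbf{1}=\prod_r\mathbf{1}_{\mathcal{A}_r}+\sum_{r_0=1}^R\Big(\prod_{r<r_0}\mathbf{1}_{\mathcal{A}_r}\Big)\mathbf{1}_{\mathcal{A}_{r_0}^c}
\]
to decompose $\prod_j|L(1/2+\beta_j+it_j,\chi_D)|^{-m}$ into a main piece supported on $\bigcap_r\mathcal{A}_r$ and bad pieces indexed by the first scale $r_0$ at which the control fails.

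\emph{Main and bad pieces.} On $\bigcap_r\mathcal{A}_r$ each factor $\exp(-m\Re P_r^{(j)})$ is approximated to within $(emV_r/K_r)^{K_r}$ by its Taylor polynomial of degree $K_r\gg mV_r$, and the full product becomes a Dirichlet polynomial $\sum_h c_h\chi_D(h)$ whose $D$-average is handled by the orthogonality relation $\tfrac{1}{|\mathcal{H}_{2g+1}|}\sum_D\chi_D(h)\approx\mathbf{1}_{h=\square}\cdot(\text{density factor})$, valid for $d(h)\ll g$. A Mertens-type evaluation of the resulting sum over squares of prime powers delivers the Gaussian prediction $(1/\beta)^{k^2m^2/2}\prod_j\min\{1/\beta_j,1/\overline{t_j}\}^{-m/2}$, with the $\min$ factor arising from evaluating sums of the shape $\sum_p|p|^{-1-\beta_i-\beta_j}\cos((t_i-t_j)d(p)\log q)$. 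For each bad piece, bound $\mathbb{P}[\mathcal{A}_{r_0}^c]$ by Markov applied to $\mathbb{E}|P_{r_0}^{(j)}|^{2q}$ for a high $q$ (again computed by expansion and orthogonality, and matching the Gaussian moment in the relevant range), and absorb the residual $\prod_j|L_j|^{-m}$ via Cauchy--Schwarz together with the trivial lower bound $|L(s,\chi_D)|\ge q^{-Cg}$ from the polynomial structure of $L(\cdot,\chi_D)$.

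\emph{Main obstacle.} The defining difficulty, and the reason positive-moment arguments do not apply verbatim, is that $\exp(-m\Re P_r^{(j)})$ blows up precisely when $\Re P_r^{(j)}$ is very negative, i.e.~when $|L|$ is small. The parameters $V_r$, $K_r$ and $R$ must therefore be tuned so that the probabilistic cost of each bad event beats this blowup at every scale simultaneously. Since computing the moments $\mathbb{E}|P_{r_0}^{(j)}|^{2q}$ requires the associated Dirichlet polynomial (of length $\sim q(N_{r_0}-N_{r_0-1})$) to have degree below $2g$ for the character orthogonality to produce the Gaussian main term, and since we need $q$ of order $\sim km$ to control the bad events at every scale, this yields precisely the constraint $\beta\gg g^{-1/(2km)+\varepsilon}$.
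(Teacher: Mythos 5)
Your plan shares the Harper-style multiscale skeleton with the paper, but it is missing the key technical ingredient, and the stopgap you propose for the "main obstacle" would not close. The naive truncation $\log|L| = \Re\sum_{d(f)\le N}\Lambda(f)\chi_D(f)/(d(f)|f|^s) + O(1)$ with a single fixed $N\asymp(\log g)/\beta$ cannot be used at the intermediate scales: when the first bad scale is some $r_0<R$, you no longer control $P_r$ for $r>r_0$, yet those terms contribute an uncontrolled amount to $\log|L|$. The paper resolves this with Lemma \ref{lb}, a Beurling--Selberg/Carneiro--Littmann--Vaaler majorant bound of the shape
\begin{equation*}
\log|L(\tfrac12+\beta+it,\chi_D)|\ \ge\ \frac{2g}{N+1}\log\Big(\frac{1-q^{-(N+1)\beta}}{1-q^{-2(N+1)}}\Big)+\Re\!\sum_{d(f)\le N}\frac{b_\beta(d(f))\Lambda(f)\chi_D(f)}{|f|^{1/2+it}}+O(1),
\end{equation*}
valid for \emph{every} $N$. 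The deterministic term $\frac{2g}{N+1}\log(1-q^{-(N+1)\beta})$ quantifies the price of truncating at a short scale; when the bad event occurs at scale $j+1$, one simply applies this with $N=N_j$ (degree controlled scales only), pays that price, and recovers a controlled short Dirichlet polynomial. Your exact Dirichlet series has instead a tail of size $\sum_{n>N}2g/(nq^{n\beta})$, which for $N=N_j$ small has the wrong sign behavior and can be as large as $g\log(1/\beta)$, destroying any lower bound. This majorant lemma is the genuine new idea for negative moments, and without it the scheme does not produce a scale-flexible bound.

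Your proposed patch on the bad events---"Cauchy--Schwarz together with the trivial lower bound $|L|\ge q^{-Cg}$"---is also quantitatively wrong. From $|1-z|\ge 1-|z|$ one only gets $|L(\tfrac12+\beta+it,\chi_D)|\ge(1-q^{-\beta})^{2g}$, i.e., $1/|L|\le\exp\big(2g\log\tfrac{1}{1-q^{-\beta}}\big)\sim\exp\big(2g\log(1/\beta)+O(g)\big)$, whose exponent blows up relative to $g$ as $\beta\to0$. What the paper actually feeds into the Cauchy--Schwarz step (and then only at the single worst scale $\mathcal{T}_0$) is the pointwise bound of Lemma \ref{initial}, itself a consequence of Lemma \ref{lb} with $N+1=2\log_q g$, namely $1/|L|\le(1-g^{-2\beta})^{-(1+\varepsilon)g/\log_q g}$, smaller than the trivial bound by a full factor of $\log g$ in the exponent. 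With the trivial bound the Stirling savings $\exp(-\tfrac12 s_0\log s_0)$ (with $s_0N_0\le g$ forced by orthogonality) cannot defeat the pointwise loss in the regime $\beta\asymp g^{-c}$, $c>0$, and the proof would stall well before the claimed range $\beta\gg g^{-1/(2km)+\varepsilon}$. Finally, the threshold $g^{-1/(2km)}$ comes not from a single degree constraint as you suggest, but from optimizing the geometric growth rate $r$, the exponent $d$ in $\ell_j=s_j^d$, and the density parameter $a$ so that the sum over bad scales $j$ with $(N_j+1)\beta\log q<\varepsilon$ converges, yielding $\log(1/\beta)<\log g/(2km)$ (inequality \eqref{condition}).
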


We expect that the upper bound we obtain in the theorem above is almost sharp, up to the log factor.

As a particular case of Theorem \ref{theorem3}, we obtain the following corollary. 

\begin{corollary}
Let $m>1/2$. Let $0<\beta <1/2$ such that $\beta \gg    g^{ -\frac{1}{2 m}+\varepsilon} $. Then we have
\begin{align*}
\frac{1}{|\mathcal{H}_{2g+1}|}  \sum_{D\in \mathcal{H}_{2g+1}} &  \frac{1}{ | L(1/2+\beta,\chi_D)|^m} \ll 
\Big( \frac{1}{\beta} \Big)^{m(m-1)/2}  (\log g)^{m(m+1)/2}.
\end{align*}
\end{corollary}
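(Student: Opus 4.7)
The plan is to derive the corollary directly from Theorem \ref{theorem3} by specializing to $k=1$, $\beta_1=\beta$ and $t_1=0$. The first step is to verify the hypotheses of Theorem \ref{theorem3}: the condition $2km>1$ becomes $2m>1$, i.e.\ $m>1/2$, and the lower bound $\beta\gg g^{-1/(2km)+\varepsilon}$ becomes $\beta\gg g^{-1/(2m)+\varepsilon}$, both of which match the assumptions of the corollary exactly.

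The second step is to unwind the upper bound of Theorem \ref{theorem3} under this specialization. With $k=1$, the prefactor $(1/\beta)^{k^2m^2/2}$ becomes $(1/\beta)^{m^2/2}$ and the logarithmic factor $(\log g)^{km(km+1)/2}$ becomes $(\log g)^{m(m+1)/2}$. For the product $\prod_{j=1}^k \min\{1/\beta_j, 1/\overline{t_j}\}^{-m/2}$, setting $t_1=0$ gives $\overline{t_1}=0$; adopting the natural convention $1/\overline{t_1}=+\infty$ in this degenerate case (which corresponds to imposing no additional constraint on the shift), the minimum equals $1/\beta$, contributing a factor of $\beta^{m/2}$.

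Combining these ingredients gives
$$\Big(\frac{1}{\beta}\Big)^{m^2/2}\cdot\beta^{m/2}\cdot(\log g)^{m(m+1)/2}=\Big(\frac{1}{\beta}\Big)^{m(m-1)/2}(\log g)^{m(m+1)/2},$$
which is precisely the claimed upper bound. There is essentially no obstacle in this deduction; the entire analytic content is already packed into Theorem \ref{theorem3}, whose proof via the Soundararajan--Harper framework adapted to negative moments constitutes the substantive part of the paper. The only point warranting comment is the interpretation of $1/\overline{t_j}$ when $t_j=0$, which we have addressed above.
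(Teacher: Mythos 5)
Your proof is correct and is exactly what the paper has in mind: the corollary is stated as "a particular case of Theorem \ref{theorem3}," and specializing to $k=1$, $\beta_1=\beta$, $t_1=0$ (so $\overline{t_1}=0$ and the factor $\min\{1/\beta_1,1/\overline{t_1}\}$ is just $1/\beta$) yields the stated bound via the arithmetic $(1/\beta)^{m^2/2}\cdot\beta^{m/2}=(1/\beta)^{m(m-1)/2}$. Your remark about the convention $1/\overline{t_1}=+\infty$ at $t_1=0$ is exactly the right observation; it is also consistent with where this factor comes from in the proof of Theorem \ref{theorem3}, namely $\min\{1/\beta_r,\min\{N_j,1/\overline{t_r}\}\}$ with $N_j\geq 1/\beta_r$ in the relevant range, so the $N_j$ drops out and one is left with $1/\beta_r$ when $t_r=0$.
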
 

As an application to the Ratios Conjecture, we also compute the one-level density of zeros in the family. Let $\phi(\theta) = \sum_{|n| \leq N} \widehat{\phi}(n) e(n \theta)$ (with $e(x) = e^{2 \pi i x}$) be a real, even trigonometric polynomial, and let $\Phi(2g \theta) = \phi(\theta)$. The one-level density of zeros is defined to be 
\begin{equation}
\Sigma (\Phi,g) := \frac{1}{|\mathcal{H}_{2g+1}|} \sum_{D \in \mathcal{H}_{2g+1}} \sum_{j=1}^{2g} \Phi(2g \theta_{j,D}),
\label{density}
\end{equation}
where $\theta_{j,D}$ are defined in \eqref{zeros}.
 Rudnick \cite{R} obtained an asymptotic formula for $\Sigma(\Phi,g)$ when $N<4g$, and several lower order terms were identified in \cite{BF1} for restricted ranges of $N$. As an application to the computation of the one-level density of zeros, Bui and Florea \cite{BF1} also proved that more than $94\%$ of $L(1/2,\chi_D) \neq 0$. From the Katz and Sarnak density conjectures \cite{katzsarnak, katzsarnak2}, it is expected that $L(1/2,\chi_D) \neq 0$ for $100 \%$ of the discriminants. Here, we recover and get agreement with the asymptotic formula obtained in \cite{R} for $N<4g$, using different techniques.
 
 \kommentar{Here, we extend the support of the Fourier transform to $N<5g$, and hence improve the proportion of non-vanishing obtained in \cite{BF}. 
 \begin{theorem}
 We have
\begin{align} 
\Sigma(\Phi,g) &= \widehat{\Phi}(0) - \frac{1}{g} \sum_{n=1}^g \widehat{\Phi} \Big(  \frac{n}{g} \Big)- \frac{\widehat{\Phi}(1)}{g(q-1)} + \frac{1}{g} \sum_{n=1}^{N/2} \widehat{\Phi} \Big( \frac{n}{g}  \Big) \frac{1}{q^n} \sum_{d(P)|n} \frac{d(P)}{|P|+1}  \nonumber \\
&+ O(q^{N -5g+\varepsilon g}).
\end{align}
 \label{theorem_density}
 \end{theorem}

As an application to the theorem above, we also obtain the following corollary.

\begin{corollary}
\label{nonvanish_cor}
We have
$$ \frac{1}{|\mathcal{H}_{2g+1}|} \Big| \Big\{ D \in \mathcal{H}_{2g+1} \, \Big| L(1/2,\chi_D) \neq 0  \Big\} \Big| \geq 0.96+o(1),$$ as $g \to \infty$.
\end{corollary}}

The paper is organized as follows. In Section \ref{background} we provide the necessary background. In Section \ref{mainprop} we prove Theorem \ref{theorem2}. We prove Theorem \ref{theorem1} in Section \ref{mainthm}, and refine the result obtained in Section \ref{mainprop} in the case of a quotient of two $L$--functions. We prove the upper bounds for negative moments of $L$--functions in Section \ref{sectiontheorem3}, and compute the $1$-level density of zeros in Section \ref{density_comp}. 
Finally, in the Appendix, we prove an asymptotic formula for a certain trigonometric sum.

\section{Background in function fields}
\label{background}
Let $\mathcal{M}$ denote the set of monic polynomials over $\mathbb{F}_q[x]$, $\mathcal{M}_n$ the set of monic polynomials of degree $n$ and $\mathcal{M}_{\leq n}$ the set of monic polynomials of degree at most $n$. Let $\mathcal{H}_n$ denote the set of monic, square-free polynomials of degree $n$. We will denote the degree of a polynomial $f$ by $d(f)$. The norm of a polynomial is defined by $|f|=q^{d(f)}$.
\subsection{Quadratic Dirichlet $L$-functions over function fields}

For $\Re s>1$, the zeta function of $\mathbb{F}_q[x]$ is defined by
\[
\zeta_q(s):=\sum_{f\in\mathcal{M}}\frac{1}{|f|^s}=\prod_{P\in \mathcal{P}}\bigg(1-\frac{1}{|P|^s}\bigg)^{-1}.
\]
Since there are $q^n$ monic polynomials of degree $n$, we see that
\[
\zeta_q(s)=\frac{1}{1-q^{1-s}}.
\]
With the change of variable $u=q^{-s}$, we then write $\mathcal{Z}(u)=\zeta_q(s)$, so that $$\mathcal{Z}(u)=\frac{1}{1-qu}.$$

For $P$ a monic irreducible polynomial, the quadratic residue symbol $\big(\frac{f}{P}\big)\in\{0,\pm1\}$ is defined by
\[
\Big(\frac{f}{P}\Big)\equiv f^{(|P|-1)/2}(\textrm{mod}\ P).
\]
If $Q=P_{1}^{\alpha_1}P_{2}^{\alpha_2}\ldots P_{r}^{\alpha_r}$, then the Jacobi symbol is defined by
\[
\Big(\frac{f}{Q}\Big)=\prod_{j=1}^{r}\Big(\frac{f}{P_j}\Big)^{\alpha_j}.
\]
The Jacobi symbol satisfies the quadratic reciprocity law. That is to say if $A,B\in \mathbb{F}_q[x]$ are relatively prime, monic polynomials, then
\[
\Big(\frac{A}{B}\Big)=(-1)^{(q-1)d(A)d(B)/2}\Big(\frac{B}{A}\Big).
\]
For simplicity we assume that $q\equiv 1(\textrm{mod}\ 4)$, and hence the quadratic reciprocity law gives $\big(\frac{A}{B}\big)=\big(\frac{B}{A}\big)$, a fact we will use throughout the paper.

For $D$ monic, we define the character 
\[
\chi_D(g)=\Big(\frac{D}{g}\Big),
\]
and consider the $L$-function attached to $\chi_D$,
\[
L(s,\chi_D):=\sum_{f\in\mathcal{M}}\frac{\chi_D(f)}{|f|^s}.
\]
With the change of variable $u=q^{-s}$ we have
\begin{equation}\label{formulaL}
\mathcal{L}(u,\chi_D):=L(s,\chi_D)=\sum_{f\in\mathcal{M}}\chi_D(f)u^{d(f)}=\prod_{P\in \mathcal{P}}\big(1-\chi_D(P)u^{d(P)}\big)^{-1}.
\end{equation}
For $D\in\mathcal{H}_{2g+1}$, $\mathcal{L}(u,\chi_D)$ is a polynomial in $u$ of degree $2g$ and it satisfies a functional equation
\begin{equation}\label{tfe}
\mathcal{L}(u,\chi_D)=(qu^2)^g\mathcal{L}\Big(\frac{1}{qu},\chi_D\Big).
\end{equation}

There is a connection between $\mathcal{L}(u,\chi_D)$ and zeta function of curves. For $D\in\mathcal{H}_{2g+1}$, the affine equation $y^2=D(x)$ defines a projective and connected hyperelliptic curve $C_D$ of genus $g$ over $\mathbb{F}_q$. The zeta function of the curve $C_D$ is defined by
\[
Z_{C_D}(u)=\exp\bigg(\sum_{j=1}^{\infty}N_j(C_D)\frac{u^j}{j}\bigg),
\]
where $N_j(C_D)$ is the number of points on $C_D$ over $\mathbb{F}_q$, including the point at infinity. Weil \cite{W} showed that
\[
Z_{C_D}(u)=\frac{P_{C_D}(u)}{(1-u)(1-qu)},
\]
where $P_{C_D}(u)$ is a polynomial of degree $2g$. It is known that $P_{C_D}(u)=\mathcal{L}(u,\chi_D)$ (this was proved in Artin's thesis). The Riemann Hypothesis for curves over function fields was proven by Weil \cite{W}, so all the zeros of $\mathcal{L}(u,\chi_D)$ are on the circle $|u|=q^{-1/2}$. We express $\mathcal{L}(u,\chi_D)$ in terms of its zeros as
\begin{equation}
\mathcal{L}(u,\chi_D)=\prod_{j=1}^{2g}\big(1-uq^{1/2}e^{-2\pi i\theta_{j,D}}\big). \label{zeros}
\end{equation} 

\subsection{Preliminary lemmas}

We start with the analogue of the approximate functional equation in the number field setting, which gives the following exact formula for $\prod_{j=1}^kL(1/2+\alpha_j,\chi_D)$. 

\begin{lemma}\label{fe} 
We have
$$\prod_{j=1}^kL(\tfrac12+\alpha_j,\chi_D)=\sum_{f\in\mathcal{M}_{\leq kg}}\frac{\tau_{\bf A}(f)\chi_D(f)}{\sqrt{|f|}}+q^{-2g{\bf A}}\sum_{f\in\mathcal{M}_{\leq kg-1}}\frac{\tau_{{\bf A}^-}(f)\chi_D(f)}{\sqrt{|f|}},$$ with $\tau_{\bf A}$ defined in \eqref{notation}.
\end{lemma}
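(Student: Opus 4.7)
The plan is to generalize the standard approximate functional equation from a single $L$--function to a product of $k$ of them. Set $\Lambda_+(u):=\prod_{j=1}^{k}\mathcal{L}(uq^{-\alpha_j},\chi_D)$; since each factor is a polynomial in $u$ of degree $2g$, $\Lambda_+$ is a polynomial of degree at most $2kg$ satisfying $\Lambda_+(q^{-1/2})=\prod_{j=1}^{k}L(\tfrac12+\alpha_j,\chi_D)$. I would split the sum $\Lambda_+(q^{-1/2})$ at the midpoint $n=kg$ of the degree range, leaving the low part as the first claimed sum and converting the upper half through the functional equation \eqref{tfe} into the second sum with prefactor $q^{-2g{\bf A}}$.

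To identify the coefficients, expand $\Lambda_+(u)=\sum_{n=0}^{2kg}c_n u^n$ by using the Dirichlet series form of $\mathcal{L}$ from \eqref{formulaL} and grouping the $k$-fold product over factorisations $f=f_1\cdots f_k$ with $\sum_j d(f_j)=n$. Complete multiplicativity of $\chi_D$ together with the definition \eqref{notation} of $\tau_{\bf A}$ give
$$c_n=\sum_{f\in\mathcal{M}_n}\chi_D(f)\tau_{\bf A}(f).$$
Setting $u=q^{-1/2}$ turns the weight $q^{-n/2}$ into $|f|^{-1/2}$, so summing $n$ from $0$ to $kg$ immediately produces $\sum_{f\in\mathcal{M}_{\leq kg}}\tau_{\bf A}(f)\chi_D(f)/\sqrt{|f|}$, which is the first of the two claimed sums.

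For the high part $kg+1\leq n\leq 2kg$, apply the functional equation \eqref{tfe} to each factor of $\Lambda_+$ to obtain
$$\Lambda_+(u)=q^{g(k-2{\bf A})}u^{2kg}\Lambda_-\Big(\frac{1}{qu}\Big),\qquad\Lambda_-(v):=\prod_{j=1}^{k}\mathcal{L}(vq^{\alpha_j},\chi_D).$$
The same coefficient computation as above yields $\Lambda_-(v)=\sum_m c'_m v^m$ with $c'_m=\sum_{f\in\mathcal{M}_m}\chi_D(f)\tau_{{\bf A}^-}(f)$, and matching coefficients of $u^n$ in the identity above gives $c_n=q^{-gk-2g{\bf A}+n}\,c'_{2kg-n}$. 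Substituting $n=2kg-m$ with $m=0,\ldots,kg-1$ in the high part at $u=q^{-1/2}$, the prefactor telescopes to $q^{-2g{\bf A}}q^{-m/2}$, delivering precisely $q^{-2g{\bf A}}\sum_{f\in\mathcal{M}_{\leq kg-1}}\tau_{{\bf A}^-}(f)\chi_D(f)/\sqrt{|f|}$.

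The argument is essentially bookkeeping, so no single step is a conceptual obstacle. The two points to watch are the off-by-one in the second sum (the range is $\mathcal{M}_{\leq kg-1}$ rather than $\mathcal{M}_{\leq kg}$), which reflects the symmetric split of the $2kg+1$ coefficients about $n=kg$, and the fact that the identity for $c_n$ is stated without restricting the factorisations to $d(f_j)\leq 2g$; the latter causes no trouble because any contribution from $d(f_j)>2g$ is automatically absorbed by the vanishing of $\sum_{f\in\mathcal{M}_{m_j}}\chi_D(f)$ for $m_j>2g$, which forces the coefficient $c_n$ to vanish for $n>2kg$ as it must.
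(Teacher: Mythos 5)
Your argument is correct and follows the same route as the paper: write $\prod_j\mathcal{L}(q^{-\alpha_j}u,\chi_D)$ as a polynomial of degree $\le 2kg$ with coefficients $c_n=\sum_{f\in\mathcal{M}_n}\tau_{\bf A}(f)\chi_D(f)$, split at $n=kg$, and use the functional equation \eqref{tfe} to convert the high-degree coefficients into $\tau_{{\bf A}^-}$-weighted sums with the prefactor $q^{-2g{\bf A}}$. The only presentational difference is that the paper equates coefficients before specialising $u=q^{-1/2}$ while you specialise first; the bookkeeping (including the $kg-1$ cutoff on the second sum) is identical.
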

\begin{proof}
From \eqref{formulaL} we have
\[
\mathcal{L}(q^{-\alpha_j}u,\chi_D)=\sum_{n= 0}^\infty\bigg(\sum_{f\in\mathcal{M}_n}\chi_D(f)|f|^{-\alpha_j}\bigg)u^n.
\]
So if we let 
\begin{equation}\label{sumovern}
\prod_{j=1}^k\mathcal{L}(q^{-\alpha_j}u,\chi_D)=\sum_{n= 0}^\infty c_nu^n,
\end{equation}
then
\[
c_n=\sum_{f\in\mathcal{M}_n}\tau_{\bf A}(f)\chi_D(f).
\]
Note that as $\mathcal{L}(u,\chi_D)$ is a polynomial in $u$ of degree $2g$, the sum in \eqref{sumovern} can be truncated at $n\leq 2kg$.

From the functional equation \eqref{tfe} we get
\begin{align*}
\sum_{n\leq 2kg}c_nu^n&=\prod_{j=1}^k(q^{1-2\alpha_j}u^2)^{g}\mathcal{L}\Big(\frac{q^{\alpha}}{qu},\chi_D\Big)\\
&=q^{kg-2g{\bf A}}u^{2kg}\sum_{n\leq 2kg}\bigg(\sum_{f\in\mathcal{M}_n}\tau_{{\bf A}^-}(f)\chi_D(f)\bigg)\Big(\frac{1}{qu}\Big)^n.
\end{align*}
Equating the coefficients of $u^{2kg-n}$ we obtain
\[
c_{2kg-n}=q^{kg-n-2g{\bf A}}\bigg(\sum_{f\in\mathcal{M}_n}\tau_{{\bf A}^-}(f)\chi_D(f)\bigg).
\]
Hence
\begin{align*}
\prod_{j=1}^k\mathcal{L}(q^{-\alpha_j}u,\chi_D)&=\sum_{n\leq kg}c_nu^n+\sum_{n\leq kg-1}c_{2kg-n}u^{2kg-n}\\
&=\sum_{n\leq kg}c_nu^n+q^{-2g{\bf A}}\sum_{n\leq kg-1}(qu^2)^{kg-n}\bigg(\sum_{f\in\mathcal{M}_n}\tau_{{\bf A}^-}(f)\chi_D(f)\bigg)u^{n}.
\end{align*}
The lemma follows by letting $u=q^{-1/2}$.
\end{proof}

The following lemmas are in \cite{F1} (see  Lemma 2.2, Proposition 3.1 and Lemma 3.2)
.

\begin{lemma}\label{L1}
For $f\in\mathcal{M}$ we have
\[
\sum_{D\in\mathcal{H}_{2g+1}}\chi_D(f)=\sum_{C|f^\infty}\sum_{r\in\mathcal{M}_{2g+1-2d(C)}}\chi_f(r)-q\sum_{C|f^\infty}\sum_{r\in\mathcal{M}_{2g-1-2d(C)}}\chi_f(r),
\]
where the summations over $C$ are over monic polynomials $C$ whose prime factors are among the prime factors of $f$.
\end{lemma}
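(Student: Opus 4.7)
The plan is to encode the left-hand side as the coefficient of $u^{2g+1}$ in a generating function and then manipulate the associated Euler product. Setting
$$F_f(u) := \sum_{\substack{D\ \text{monic,}\\ \text{square-free}}} \chi_D(f)\, u^{d(D)},$$
the multiplicativity of $\chi_D(f)$ in $D$ together with the square-free restriction gives $F_f(u) = \prod_{P \in \mathcal{P}}\bigl(1 + \chi_P(f)\, u^{d(P)}\bigr)$. The first move I would make is to apply the elementary identity $1 + x = (1-x^2)/(1-x)$ factor-wise and then split the product according to whether $P \mid f$: primes dividing $f$ contribute trivially because $\chi_P(f) = 0$, while for $P \nmid f$ one has $\chi_P(f)^2 = 1$.

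The next step is to invoke the quadratic reciprocity law; under the running hypothesis $q \equiv 1 \pmod 4$ it reduces to the clean identity $\chi_P(f) = \chi_f(P)$, which is exactly what is needed to recognise the denominator of the resulting expression as $\mathcal{L}(u, \chi_f)$ (the factors at primes $P \mid f$ are trivial there as well). For the numerator I would use
$$\prod_{P \nmid f}\bigl(1 - u^{2d(P)}\bigr) = \frac{1 - qu^2}{\prod_{P \mid f}(1 - u^{2d(P)})},$$
which follows from $\prod_{P \in \mathcal{P}} (1 - u^{2d(P)}) = 1/\mathcal{Z}(u^2) = 1 - qu^2$, and then expand $\prod_{P \mid f}(1 - u^{2d(P)})^{-1} = \sum_{C \mid f^\infty} u^{2d(C)}$ as a sum over monic $C$ whose prime factors lie among those of $f$. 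Combining these ingredients would yield the compact identity
$$F_f(u) = (1 - qu^2)\, \mathcal{L}(u, \chi_f) \sum_{C \mid f^\infty} u^{2d(C)}.$$

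Finally, I would equate coefficients of $u^{2g+1}$ on both sides. The prefactor $(1-qu^2)$ splits this into the main contribution $[u^{2g+1}]$ and a shifted contribution $-q[u^{2g-1}]$ applied to $\mathcal{L}(u,\chi_f)\sum_{C \mid f^\infty} u^{2d(C)}$, and since $\mathcal{L}(u,\chi_f) = \sum_r \chi_f(r)\, u^{d(r)}$, reading off coefficients immediately produces the two double sums appearing in the lemma. The only genuinely arithmetic input is the reciprocity step used to interchange $\chi_P(f)$ and $\chi_f(P)$; with the assumption $q \equiv 1 \pmod 4$ this is essentially free, and all remaining work is routine generating-function bookkeeping. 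I do not anticipate any real obstacle.
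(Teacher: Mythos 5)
Your proof is correct, and it is essentially the standard generating-function argument for this identity (the paper itself does not prove Lemma \ref{L1} but cites Lemma~2.2 of \cite{F1}). All the steps check out: the Euler product $F_f(u)=\prod_P\bigl(1+\chi_P(f)u^{d(P)}\bigr)$ is valid because $D\mapsto\chi_D(f)$ is completely multiplicative; the factors at $P\mid f$ are $1$ since $\chi_P(f)=0$ there; for $P\nmid f$ one has $\chi_P(f)^2=1$ and, using $q\equiv1\pmod4$, quadratic reciprocity gives $\chi_P(f)=\chi_f(P)$, producing the denominator $\mathcal{L}(u,\chi_f)$ (whose Euler factors at $P\mid f$ are also trivial, since $\chi_f(P)=0$); the numerator $\prod_{P\nmid f}(1-u^{2d(P)})=(1-qu^2)\sum_{C\mid f^\infty}u^{2d(C)}$ follows from $\prod_P(1-u^{2d(P)})=1/\mathcal{Z}(u^2)=1-qu^2$; and extracting the coefficient of $u^{2g+1}$ from $(1-qu^2)\,\mathcal{L}(u,\chi_f)\sum_{C\mid f^\infty}u^{2d(C)}$ yields precisely the two double sums in the statement. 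An equivalent route, which I believe is the one in \cite{F1}, is to insert the square-free indicator $\sum_{L^2\mid D}\mu(L)$ into $\sum_{D\in\mathcal{M}}\chi_D(f)u^{d(D)}=\mathcal{L}(u,\chi_f)$ and use $\sum_{\gcd(L,f)=1}\mu(L)u^{2d(L)}=\prod_{P\nmid f}(1-u^{2d(P)})$; this lands on the same identity $F_f(u)=(1-qu^2)\mathcal{L}(u,\chi_f)\sum_{C\mid f^\infty}u^{2d(C)}$, so the two approaches are interchangeable here.
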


We define the generalized Gauss sum as
\[
G(V,\chi):= \sum_{u (\textrm{mod}\ f)}\chi(u)e\Big(\frac{uV}{f}\Big),
\]
where the exponential was defined in \cite{Hayes} as follows. For $a \in  \mathbb{F}_q\big((\frac 1x)\big) $, 
$$ e(a) = e^{ 2 \pi i \text{Tr}_{\mathbb{F}_q / \mathbb{F}_p} (a_1)/p},$$ where $a_1$ is the coefficient of $1/x$ in the Laurent expansion of $a$. 

\begin{lemma}\label{L3}
Let $f\in\mathcal{M}_n$. If $n$ is even then
\[
\sum_{r\in\mathcal{M}_m}\chi_f(r)=\frac{q^m}{|f|}\bigg(G(0,\chi_f)+q\sum_{V\in\mathcal{M}_{\leq n-m-2}}G(V,\chi_f)-\sum_{V\in\mathcal{M}_{\leq n-m-1}}G(V,\chi_f)\bigg),
\]
otherwise
\[
\sum_{r\in\mathcal{M}_m}\chi_f(r)= \frac{q^{m+1/2}} {|f|}\sum_{V\in\mathcal{M}_{n-m-1}}G(V,\chi_f).
\]
\end{lemma}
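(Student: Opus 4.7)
The approach is function-field Poisson summation for the character $\chi_f$. By additive orthogonality on $\mathbb{F}_q[x]/(f)$, one has the Fourier expansion
$$\chi_f(r)=\frac{1}{|f|}\sum_{V\bmod f}G(V,\chi_f)\,e\!\left(\frac{-Vr}{f}\right),$$
valid for every $r\in\mathbb{F}_q[x]$. Substituting this into $\sum_{r\in\mathcal{M}_m}\chi_f(r)$ and swapping the order of summation reduces the problem to evaluating, for each residue $V\bmod f$, the inner additive character sum $T_m(V):=\sum_{r\in\mathcal{M}_m}e(-Vr/f)$.

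To handle $T_m(V)$, I would expand $-V/f=\sum_{j\geq 1}\alpha_j x^{-j}$ in $\mathbb{F}_q((x^{-1}))$ and note that for a monic $r=x^m+a_{m-1}x^{m-1}+\cdots+a_0$, the coefficient of $x^{-1}$ in $(-V/f)\cdot r$ is $\alpha_{m+1}+a_{m-1}\alpha_m+\cdots+a_0\alpha_1$. Summing the additive character of $\mathbb{F}_q$ over each $a_i$ independently, orthogonality forces $\alpha_1=\cdots=\alpha_m=0$, which is precisely the condition $d(V)\leq n-m-1$; in that range $T_m(V)=q^m\psi(-c_V)$ with $c_V=[x^{-(m+1)}](V/f)$, and $T_m(V)=0$ otherwise. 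Splitting the outer $V$-sum by degree: $V=0$ contributes $G(0,\chi_f)$; residues with $1\le d(V)\leq n-m-2$ have $c_V=0$ and contribute a pure sum of Gauss sums; and residues with $d(V)=n-m-1$, parametrized as $V=a\hat V$ with $\hat V$ monic and $a\in\mathbb{F}_q^{\ast}$, give $c_V=a$ together with $G(a\hat V,\chi_f)=\chi_f(a)G(\hat V,\chi_f)$, so that the boundary layer factors as $\sum_{\hat V\in\mathcal{M}_{n-m-1}}G(\hat V,\chi_f)\cdot\sum_{a\in\mathbb{F}_q^{\ast}}\chi_f(a)\psi(-a)$.

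The parity dichotomy now enters through the restriction of $\chi_f$ to constants. Using the paper's standing assumption $q\equiv 1\pmod 4$ together with multiplicativity of the Jacobi symbol, one checks that $\chi_f(c)=1$ for every $c\in\mathbb{F}_q^{\ast}$ when $n=d(f)$ is even, while for $n$ odd $\chi_f$ agrees on $\mathbb{F}_q^{\ast}$ with the nontrivial quadratic character $\eta$. In the even case the inner $a$-sum collapses to $\sum_{a\in\mathbb{F}_q^{\ast}}\psi(-a)=-1$, and the algebraic identity $(q-1)\sum_{V\in\mathcal{M}_{\leq n-m-2}}G(V,\chi_f)-\sum_{V\in\mathcal{M}_{n-m-1}}G(V,\chi_f)=q\sum_{V\in\mathcal{M}_{\leq n-m-2}}G(V,\chi_f)-\sum_{V\in\mathcal{M}_{\leq n-m-1}}G(V,\chi_f)$ (where $G(cV,\chi_f)=G(V,\chi_f)$ is used to convert arbitrary-$V$ sums of a fixed degree into $(q-1)$ times the corresponding monic sum) repackages the contributions into the first formula of the lemma. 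In the odd case, orthogonality of $\eta$ on $\mathbb{F}_q^{\ast}$ annihilates both $G(0,\chi_f)$ and every complete-degree sum with $d(V)\leq n-m-2$, leaving only the boundary layer, where the $a$-sum becomes the classical quadratic Gauss sum $\tau(\eta,\psi)$. The main non-routine step is the evaluation $\tau(\eta,\psi)=+\sqrt q$, valid precisely because $q\equiv 1\pmod 4$ forces $\eta(-1)=1$, which produces the $q^{m+1/2}$ prefactor of the second formula; everything else is direct additive-character orthogonality and careful sign bookkeeping in the Laurent expansion.
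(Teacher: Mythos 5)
Your approach is the canonical one: Fourier-invert $\chi_f$ against the residues mod $f$, evaluate the additive sum over $\mathcal M_m$ via the Laurent expansion, and split by degree of $V$. That is almost certainly how \cite{F1} proves this (the present paper cites it without proof), and the even case works out exactly as you say — the algebraic regrouping $(q-1)\sum_{\mathcal M_{\le n-m-2}}-\sum_{\mathcal M_{n-m-1}}=q\sum_{\mathcal M_{\le n-m-2}}-\sum_{\mathcal M_{\le n-m-1}}$ is correct.

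There are, however, two points that need tightening. First, a convention: the paper defines $\chi_D(g)=\left(\frac{D}{g}\right)$, which is invariant under $g\mapsto cg$ and hence trivial on constants; but in the Gauss sum $G(V,\chi_f)=\sum_{u\bmod f}\chi(u)e(uV/f)$ one needs $\chi$ defined on all of $(\mathbb F_q[x]/f)^\times$, and your argument uses the residue symbol $u\mapsto\left(\frac{u}{f}\right)$, whose restriction to $\mathbb F_q^\times$ is $\eta^{d(f)}$. These agree on monic polynomials coprime to $f$ by reciprocity, but they disagree on constants, which is precisely where you extract the parity. You should flag this identification explicitly rather than silently switch — with $\left(\frac{f}{\cdot}\right)$ the boundary-layer Gauss sum over $a\in\mathbb F_q^\times$ would be entirely different.

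Second, and more substantively: the sign of the inner Gauss sum is a real gap. The hypothesis $q\equiv1\pmod4$ gives $\eta(-1)=1$, hence $\tau(\eta,\psi)^2=q$, but it does \emph{not} by itself give $\tau(\eta,\psi)=+\sqrt q$. By Hasse–Davenport the sign of $\sum_{a\in\mathbb F_q}\eta(a)e^{2\pi i\,\mathrm{Tr}_{\mathbb F_q/\mathbb F_p}(a)/p}$ depends on $p\bmod4$ and on $[\mathbb F_q:\mathbb F_p]$, and is in fact $-\sqrt q$ for some $q\equiv1\pmod4$ (e.g. $q=p^2$ with $p\equiv1\pmod4$). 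So the assertion ``$\eta(-1)=1$ forces $\tau=+\sqrt q$'' is not a proof; the $+$ sign has to be obtained from whatever normalization of the additive character and Gauss sum is in force in \cite{F1} (or from an assumption such as $q=p$ prime). In short: the structure of your argument is exactly right, but the one step you single out as ``the main non-routine step'' is also the one that is not yet justified by the reasoning you give — you need to pin down the Gauss-sum sign by an explicit computation or citation compatible with Lemma~\ref{L2}, not by appeal to $\eta(-1)=1$ alone.
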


\begin{lemma}\label{L2}

\begin{enumerate}
\item If $(f,h)=1$, then $G(V, \chi_{fh})= G(V, \chi_f) G(V,\chi_h)$.
\item Write $V= V_1 P^{\alpha}$ where $P \nmid V_1$.
Then 
 $$G(V , \chi_{P^j})= 
\begin{cases}
0 & \mbox{if } j \leq \alpha \text{ and } j \text{ odd,} \\
\varphi(P^j) & \mbox{if }  j \leq \alpha \text{ and } j \text{ even,} \\
-|P|^{j-1} & \mbox{if }  j= \alpha+1 \text{ and } j \text{ even,} \\
\chi_P(V_1) |P|^{j-1/2} & \mbox{if } j = \alpha+1 \text{ and } j \text{ odd, } \\
0 & \mbox{if } j \geq \alpha+2 .
\end{cases}$$ 
\end{enumerate}
\end{lemma}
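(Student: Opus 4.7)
For part (1), my plan is a direct Chinese Remainder Theorem argument in $\mathbb{F}_q[x]$. Given $(f,h) = 1$, the map $u \bmod fh \mapsto (u \bmod f,\ u \bmod h)$ is a bijection, and I would parametrise $u$ via the CRT idempotents as $u = u_f h \bar h_f + u_h f \bar f_h$, where $h \bar h_f \equiv 1 \pmod f$ and $f \bar f_h \equiv 1 \pmod h$. The Jacobi symbol splits multiplicatively as $\chi_{fh}(u) = \chi_f(u) \chi_h(u) = \chi_f(u_f h \bar h_f)\, \chi_h(u_h f \bar f_h)$, while additively $uV/(fh) \equiv u_f V \bar h_f / f + u_h V \bar f_h / h$, so $e(uV/(fh))$ factorises correspondingly. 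After absorbing the unit factors $h \bar h_f$ and $f \bar f_h$ into the summation variables $u_f, u_h$, the double sum factorises as $G(V, \chi_f)\, G(V, \chi_h)$, provided the residual sign twist $\chi_f(h)\chi_h(f)$ collapses to $+1$; this is exactly what quadratic reciprocity in the $q \equiv 1 \pmod 4$ regime provides.

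For part (2), my plan is a case analysis according to the parity of $j$, exploiting two different structural features of $\chi_{P^j}$. When $j$ is even, $\chi_{P^j}$ reduces to the indicator of units in $\mathcal{O}/P^j$, so $G(V, \chi_{P^j})$ is the Ramanujan sum $c_{P^j}(V) = \sum_{d \mid (P^j, V)} |d|\, \mu(P^j/d)$. Writing $V = V_1 P^\alpha$ gives $(P^j, V) = P^{\min(j,\alpha)}$; since $\mu(P^{j-i}) \neq 0$ only for $j - i \in \{0,1\}$, at most the top two indices contribute, giving $\varphi(P^j)$ when $j \leq \alpha$, $-|P|^{j-1}$ when $j = \alpha+1$, and $0$ when $j \geq \alpha + 2$ (the surviving indices $i \in \{j-1, j\}$ all exceed $\min(j,\alpha) = \alpha$).

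When $j$ is odd, $\chi_{P^j}(u) = \chi_P(u \bmod P)$, so I decompose $u = v + wP$ with $v$ ranging over a lift of $\mathbb{F}_P$ and $w$ over $\mathcal{O}/P^{j-1}$, and use $e(uV/P^j) = e(vV/P^j)\, e(wV/P^{j-1})$. Orthogonality in the $w$-variable gives $|P|^{j-1}$ or $0$ according to whether $P^{j-1} \mid V$, immediately killing the case $j \geq \alpha + 2$. The remaining cases reduce to the $v$-sum $\sum_{v \in \mathbb{F}_P^\times} \chi_P(v)\, e(vV/P^j)$: when $j \leq \alpha$ the exponential is trivial and orthogonality of $\chi_P$ kills the sum; when $j = \alpha + 1$ it is a quadratic Gauss sum mod $P$, evaluating to $\chi_P(V_1)|P|^{1/2}$ and producing $\chi_P(V_1) |P|^{j-1/2}$ after combining with the $|P|^{j-1}$ from the $w$-sum.

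The main obstacle I anticipate is the sign/unit bookkeeping in part (1): verifying that all Jacobi-symbol twists $\chi_f(h)$, $\chi_h(f)$, $\chi_f(\bar h_f)$, $\chi_h(\bar f_h)$ introduced by the CRT parametrisation genuinely collapse to $+1$ under the standing hypothesis $q \equiv 1 \pmod 4$. In part (2), the only nontrivial external input is the evaluation $|G(1, \chi_P)| = |P|^{1/2}$ of the quadratic Gauss sum over the residue field, which I would cite from the classical theory rather than reprove.
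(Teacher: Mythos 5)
The paper does not actually prove Lemma \ref{L2}: it is one of the results imported verbatim from \cite{F1} (specifically Lemma 3.2 there), so there is no in-paper argument to compare against. Assessed on its own merits, your proposal is correct, and it is the standard route one would expect to find in the cited source. A few confirmations of the delicate points. In part (1), the twist you flag really does appear: after re-indexing $u_f \mapsto u_f h$ and $u_h \mapsto u_h f$ in the CRT-factored double sum you pick up exactly $\chi_f(h)\chi_h(f) = (f/h)(h/f)$, and since $q \equiv 1 \pmod 4$ quadratic reciprocity gives $(f/h) = (h/f)$, so the product is $(f/h)^2 = 1$ for coprime $f,h$. In part (2) for $j$ even, $\chi_{P^j}(u) = (u/P)^j$ is the indicator of units mod $P^j$, so $G(V,\chi_{P^j})$ is precisely the Ramanujan sum $c_{P^j}(V)=\sum_{d \mid (P^j,V)} |d|\,\mu(P^j/d)$; with $(P^j,V)=P^{\min(j,\alpha)}$ only $i\in\{j-1,j\}$ survive the M\"obius factor, giving $\varphi(P^j)$, $-|P|^{j-1}$, or $0$ exactly as stated. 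For $j$ odd, $\chi_{P^j}(u) = (u/P)$ depends only on $u \bmod P$, and the decomposition $u = v + wP$ with orthogonality in $w$ kills the case $j \geq \alpha+2$, trivializes the exponential when $j \leq \alpha$ so that $\sum_v \chi_P(v) = 0$, and reduces the case $j = \alpha+1$ to $\chi_P(V_1)\,G(1,\chi_P)$. The only external input is $G(1,\chi_P) = |P|^{1/2}$, which holds here with no residual sign because $q\equiv 1\pmod 4$ forces $|P| = q^{d(P)} \equiv 1 \pmod 4$; citing that from the classical theory is entirely appropriate.
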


\begin{lemma}\label{L5}
For $f\in\mathcal{M}$ we have
\[
\frac{1}{| \mathcal{H}_{2g+1}|}\sum_{D \in \mathcal{H}_{2g+1}} \chi_D(f^2)=\prod_{P|f}\bigg(1+\frac{1}{|P|}\bigg)^{-1}+O_\varepsilon(q^{-2g}|f|^{\varepsilon}).
\]
\end{lemma}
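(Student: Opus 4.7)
My plan is to reduce the average to a sum over square-free coprime polynomials and compute it via a generating-function identity. Since $\chi_D(f^2)=\chi_D(f)^2$ is exactly the indicator of $(D,f)=1$, the sum on the left counts monic square-free polynomials of degree $2g+1$ coprime to $f$, and these are generated by
\[
\sum_{n\ge 0}\bigl|\{D\in\mathcal{H}_n:(D,f)=1\}\bigr|\,u^n \;=\; \prod_{P\nmid f}\bigl(1+u^{d(P)}\bigr) \;=\; \frac{1-qu^2}{1-qu}\prod_{P\mid f}\bigl(1+u^{d(P)}\bigr)^{-1},
\]
using $\prod_P(1+u^{d(P)})=\mathcal{Z}(u)/\mathcal{Z}(u^2)$ together with $\mathcal{Z}(u)=(1-qu)^{-1}$.

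The main term will come from the simple pole at $u=1/q$: either via contour integration around this point, or equivalently by expanding $\prod_{P\mid f}(1+u^{d(P)})^{-1}=\sum_{g:\,P\mid g\Rightarrow P\mid f}\lambda(g)\,u^{d(g)}$ (with $\lambda$ the Liouville function) and convolving against the explicit formula $[u^m](1-qu^2)/(1-qu)=(q-1)q^{m-1}$ valid for $m\ge 2$. Extending the resulting series over all $g$ whose prime factors divide $f$ produces
\[
(q-1)q^{2g}\sum_{g:\,P\mid g\Rightarrow P\mid f}\lambda(g)\,q^{-d(g)} \;=\; |\mathcal{H}_{2g+1}|\prod_{P\mid f}\bigl(1+|P|^{-1}\bigr)^{-1},
\]
which is the claimed main term after dividing by $|\mathcal{H}_{2g+1}|=(q-1)q^{2g}$.

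The error then consists of the tail $(q-1)q^{2g}\sum_{d(g)\ge 2g}\lambda(g)q^{-d(g)}$ plus small boundary contributions coming from $d(g)\in\{2g,2g+1\}$, where the convolution formula differs. I would bound both by pulling out the factor $q^{-2g}$ and invoking the function-field Mertens-type estimate $\prod_{P\mid f}(1-|P|^{-s})^{-1}\ll_\varepsilon |f|^\varepsilon$ for a suitable $s<1$. The main obstacle is obtaining this $|f|^\varepsilon$ saving uniformly in $f$: when $f$ has many small prime factors the Euler product is delicate to bound, and one must invoke the analogue of Mertens' theorem in $\mathbb{F}_q[x]$ together with the divisor-like inequality $\omega(f)\ll\log|f|/\log\log|f|$ to close the argument.
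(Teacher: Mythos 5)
Your reduction to counting square-free monics coprime to $f$, the generating-function identity $\frac{1-qu^2}{1-qu}\prod_{P\mid f}(1+u^{d(P)})^{-1}$, and the extraction of the main term from the residue at $u=1/q$ are all correct and match the argument of [BF, Lemma 3.4], which the paper cites in lieu of giving a proof. The gap is in the error analysis. Rankin's trick gives, for a fixed $0<s<1$,
\[
\sum_{d(g)\ge 2g}|\lambda(g)|q^{-d(g)}\le q^{-2g(1-s)}\prod_{P\mid f}\bigl(1-|P|^{-s}\bigr)^{-1}\ll_{\varepsilon,s} q^{-2g(1-s)}|f|^\varepsilon,
\]
and you cannot simply "pull out $q^{-2g}$": pushing $s\to 0$ to recover the full $q^{-2g}$ makes the Euler product diverge, while keeping $s$ bounded away from $0$ costs part of the exponential saving. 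The boundary coefficients of $\prod_{P\mid f}(1+u^{d(P)})^{-1}$ at degree $2g$ and $2g+1$ have the same difficulty: once $\omega(f)\ge 2$ they grow polynomially in $g$, so dividing by $q^{2g}$ does not leave something $O_\varepsilon(q^{-2g}|f|^\varepsilon)$.

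In fact $O_\varepsilon(q^{-2g}|f|^\varepsilon)$ is not correct as an unconditional statement. Take $f=P_1P_2$ with two distinct linear primes, so $\prod_{P\mid f}(1+u^{d(P)})^{-1}=(1+u)^{-2}$; the exact coefficient extraction gives
\[
\frac{1}{|\mathcal{H}_{2g+1}|}\sum_{D\in\mathcal{H}_{2g+1}}\chi_D(f^2)-\Bigl(1+\tfrac{1}{q}\Bigr)^{-2}\asymp \frac{g}{q^{2g}},
\]
which is not $O_\varepsilon(q^{2\varepsilon-2g})$ once $g\gg_\varepsilon 1$. The cited [BF, Lemma 3.4] actually proves an error of size $O(q^{-g}d(f))$, which your route does deliver on taking $s=1/2$ (equivalently, pushing the contour to $|u|=q^{-1/2}$ and bounding $\prod_{P\mid f}(1-|P|^{-1/2})^{-1}\ll_\varepsilon|f|^\varepsilon$). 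So the concrete thing to fix is that you should claim and prove $O_\varepsilon(q^{-g}|f|^\varepsilon)$; the $q^{-2g}$ in the paper's statement appears to be a misquote, and the Rankin/Mertens step as you have written it cannot, and should not, reach it.
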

\begin{proof}
See, for example, Lemma 3.4 in \cite{BF}.
\end{proof}

We also have the following explicit formula (see, for example, \cite{FR}). 
\begin{lemma}\label{explicit}
Let $N$ be a positive integer and $h(\theta) = \sum_{|n| \leq N} \widehat{h}(n) e(n \theta)$ be a real valued even trigonometric polynomial. Then
$$ \sum_{j=1}^{2g} h(\theta_{D,j}) = 2g \int_0^1 h(\theta) d \theta - 2 \sum_{f\in\mathcal{M}} \widehat{h}(d(f)) \frac{ \Lambda(f)\chi_D(f)}{\sqrt{|f|}}.$$
\end{lemma}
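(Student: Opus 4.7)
The plan is to prove the explicit formula by comparing two expansions of $\log \mathcal{L}(u,\chi_D)$, one obtained from the factorization over the zeros \eqref{zeros} and the other from the Euler product \eqref{formulaL}, and then integrating against the Fourier expansion of $h$. Taking logarithms in \eqref{zeros} and using $-\log(1-x)=\sum_{n\geq 1}x^n/n$ produces
$$\log \mathcal{L}(u,\chi_D) = -\sum_{n\geq 1}\frac{q^{n/2}u^n}{n}\sum_{j=1}^{2g} e^{-2\pi i n \theta_{j,D}},$$
while expanding the Euler product and grouping terms by $f=P^m$ (so that $d(f)=m\,d(P)$ and $\Lambda(f)=d(P)$) gives
$$\log \mathcal{L}(u,\chi_D) = \sum_{n\geq 1}\frac{u^n}{n}\sum_{f\in\mathcal{M}_n}\Lambda(f)\chi_D(f),$$
where $\Lambda$ denotes the function-field von Mangoldt function. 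Equating coefficients of $u^n$ yields, for each $n\geq 1$,
$$\sum_{j=1}^{2g} e^{-2\pi i n \theta_{j,D}} = -q^{-n/2}\sum_{f\in\mathcal{M}_n}\Lambda(f)\chi_D(f).$$

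Next, I would insert the Fourier expansion $h(\theta)=\sum_{|n|\leq N}\widehat{h}(n)e(n\theta)$. The $n=0$ contribution gives $2g\,\widehat{h}(0) = 2g\int_0^1 h(\theta)\,d\theta$. Since $h$ is real and even we have $\widehat{h}(-n)=\widehat{h}(n)$, and since $\mathcal{L}(u,\chi_D)\in\mathbb{R}[u]$ its zeros come in conjugate pairs, so that $\sum_j e^{2\pi in\theta_{j,D}}=\sum_j e^{-2\pi in\theta_{j,D}}$ (both being real because $\chi_D$ is real). Consequently the $+n$ and $-n$ contributions agree, and pairing them for $1\leq n\leq N$ rewrites
$$\sum_{1\leq|n|\leq N}\widehat{h}(n)\sum_{j=1}^{2g}e(n\theta_{j,D}) = -2\sum_{n=1}^{N}\widehat{h}(n)q^{-n/2}\sum_{f\in\mathcal{M}_n}\Lambda(f)\chi_D(f),$$
which collects into $-2\sum_{f\in\mathcal{M}}\widehat{h}(d(f))\Lambda(f)\chi_D(f)/\sqrt{|f|}$. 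Combining with the $n=0$ piece produces the asserted identity.

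There is no substantive obstacle here; the proof is essentially the ``primes$\leftrightarrow$zeros'' duality encoded by the two product representations of $\mathcal{L}(u,\chi_D)$, together with the symmetries of a real even trigonometric polynomial. The only small points of care are the conjugate-pair structure of the $\theta_{j,D}$ (immediate from $\mathcal{L}(u,\chi_D)\in\mathbb{R}[u]$) and the combinatorial bookkeeping of the factor of $2$ coming from pairing $\pm n$.
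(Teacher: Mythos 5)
Your proof is correct. The paper itself does not give a proof of this lemma (it cites \cite{FR}), but your argument — equating coefficients of $u^n$ in $\log\mathcal{L}(u,\chi_D)$ obtained once from the zero factorization \eqref{zeros} and once from the Euler product \eqref{formulaL}, then pairing $\pm n$ using the evenness of $h$ and the conjugate symmetry of the $\theta_{j,D}$ — is the standard derivation and goes through without difficulty.
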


Now let $t \in \mathbb{R}$ and $\ell$ be an even integer. Let
$$E_{\ell}(t) = \sum_{s \leq \ell} \frac{t^s}{s!}.$$
Note that $E_{\ell}(t) \geq 1$ if $t\geq 0$ and $E_{\ell}(t) >0$ for any $t$ since $\ell$ is even. We will frequently use the fact that  for $t \leq \ell/e^2$, we have that
\begin{equation}
e^t \leq (1+e^{-\ell/2}) E_{\ell}(t).
\label{taylor}
\end{equation}

Let $\nu(f)$ be the multiplicative function given by $$\nu(P^a) = \frac{1}{a!}.$$ We shall need the following result, which is part of Lemma $3.2$ in \cite{DFL}.

\begin{lemma}\label{power}
Let $a(f)$ be a completely multiplicative function
. Then for any interval $I$ and any $s\in\mathbb{N}$ we have that
\begin{equation*}
\bigg( \sum_{d(P) \in I} a(P)  \bigg)^s = s! \sum_{\substack{P | f \Rightarrow d(P) \in I \\ \Omega(f) = s}} a(f) \nu(f).
\end{equation*}
\end{lemma}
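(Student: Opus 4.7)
The plan is to prove Lemma \ref{power} by a direct multinomial expansion and a grouping of terms according to the product of primes chosen.

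First I would expand the $s$-th power as an $s$-fold sum:
\[
\bigg( \sum_{d(P) \in I} a(P)  \bigg)^s = \sum_{\substack{P_1,\ldots,P_s \\ d(P_i)\in I}} a(P_1)\cdots a(P_s).
\]
Since $a$ is completely multiplicative, $a(P_1)\cdots a(P_s) = a(P_1\cdots P_s)$, so each ordered $s$-tuple contributes $a(f)$ where $f:=P_1\cdots P_s$. Note that every $f$ arising this way satisfies $\Omega(f)=s$ and has all its prime divisors with degree in $I$.

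Next I would group the tuples by the resulting polynomial $f$. Writing $f=\prod_P P^{a_P}$ with $\sum_P a_P = s$, the number of ordered $s$-tuples $(P_1,\ldots,P_s)$ whose product (with multiplicity) is $f$ is the multinomial coefficient
\[
\binom{s}{(a_P)_P} = \frac{s!}{\prod_P a_P!}.
\]
By definition $\nu$ is multiplicative with $\nu(P^{a}) = 1/a!$, so $\prod_P (a_P!)^{-1} = \nu(f)$. Substituting,
\[
\bigg( \sum_{d(P) \in I} a(P)  \bigg)^s = \sum_{\substack{P|f\Rightarrow d(P)\in I\\ \Omega(f)=s}} \frac{s!}{\prod_P a_P!}\, a(f) = s!\sum_{\substack{P|f\Rightarrow d(P)\in I\\ \Omega(f)=s}} \nu(f)\, a(f),
\]
which is exactly the claimed identity.

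There is no real obstacle here: the argument is purely combinatorial and only uses complete multiplicativity of $a$ together with the definition of $\nu$. The only point to be careful about is keeping track of the fact that distinct orderings of the same multiset of primes yield the same $f$, which is precisely what the multinomial coefficient accounts for. Since the result is well known (the lemma is quoted from \cite{DFL}), I would keep the write-up to the few lines above.
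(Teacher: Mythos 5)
Your proof is correct and is the standard multinomial-expansion argument. The paper itself gives no proof of this lemma — it is quoted directly from Lemma 3.2 of \cite{DFL} — so there is nothing in the paper to compare against; your write-up supplies exactly the short combinatorial argument one would expect, correctly identifying $\prod_P(a_P!)^{-1}=\nu(f)$ and using complete multiplicativity of $a$.
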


\section{Proof of Theorem \ref{theorem2}}\label{mainprop}

In this section we shall prove Theorem \ref{theorem2}. Similar results without the shifts were obtained 
in \cite{BF}.

\subsection{Initial manipulations}

By the functional equation \eqref{tfe} we have
\begin{equation}\label{switchingalpha}
\prod_{j=1}^{k}L(\tfrac 12+\alpha_j,\chi_D)=q^{-2g\sum_{j=1}^{k}\mathfrak{a}_j\alpha_j}\prod_{j=1}^{k}L(\tfrac 12+\varepsilon_j\alpha_j,\chi_D),
\end{equation}
where $\mathfrak{a}_j=0$ and $\varepsilon_j=1$ if $\Re \alpha_j\geq0$, and $\mathfrak{a}_j=1$ and $\varepsilon_j=-1$ if $\Re \alpha_j<0$.
So we can assume that $\Re \alpha_j\geq0$ for every $1\leq j\leq k$.

In view of Lemma \ref{fe} we have
\[
\frac{1}{|\mathcal{H}_{2g+1}|}\sum_{D\in \mathcal{H}_{2g+1}}\bigg(\prod_{j=1}^kL(\tfrac12+\alpha_j,\chi_D)\bigg)\chi_D(h)=S_{\bf A}(h;kg)+q^{-2g\bf A}S_{\bf A^-}(h;kg-1),
\]
where if ${\bf C}=\{\gamma_1,\ldots,\gamma_k\}$ with  $\Re \gamma_j\geq0$ for every $1\leq j\leq k$, or $\Re \gamma_j\leq0$ for every $1\leq j\leq k$, then
\[
S_{\bf C}(h;N)=\frac{1}{|\mathcal{H}_{2g+1}|}\sum_{D\in \mathcal{H}_{2g+1}}\sum_{d(f)\leq N}\frac{\tau_{\bf C}(f)\chi_D(fh)}{\sqrt{|f|}}
\]
for $N\in\{kg,kg-1\}$. 

From Lemma \ref{L1} we obtain that
\[
S_{\bf C}(h;N)=S_{{\bf C};1}(h;N)-qS_{{\bf C};2}(h;N),
\]where
\begin{align*}
S_{{\bf C};1}(h;N)=\frac{1}{|\mathcal{H}_{2g+1}|}\sum_{\substack{d(f)\leq N}}\frac{\tau_{\bf C}(f)}{\sqrt{|f|}}\sum_{C|(fh)^\infty}\sum_{r\in\mathcal{M}_{2g+1-2d(C)}}\chi_{fh}(r)
\end{align*}
and
\[
S_{{\bf C};2}(h;N)=\frac{1}{|\mathcal{H}_{2g+1}|}\sum_{\substack{d(f)\leq N}}\frac{\tau_{\bf C}(f)}{\sqrt{|f|}}\sum_{C|(fh)^\infty}\sum_{r\in\mathcal{M}_{2g-1-2d(C)}}\chi_{fh}(r).
\]
We further write 
\[
S_{{\bf C};1}(h;N)=S_{{\bf C};1}^{\textrm{e}}(h;N)+S_{{\bf C};1}^{\textrm{o}}(h;N)
\]
according to whether the degree of the product $fh$ is even or odd, respectively. Lemma \ref{L3} and Lemma \ref{L2} then lead to
\[
S_{{\bf C};1}^{\textrm{e}}(h;N)=M_{{\bf C};1}(h;N)+S_{{\bf C};1}^{\textrm{e}}(h;N;V\ne0),
\]
where
\begin{align}\label{M}
M_{{\bf C};1}(h;N)=\frac{q}{(q-1)|h|}\sum_{\substack{d(f)\leq N\\fh=\square}}\frac{\tau_{\bf C}(f)\varphi(fh)}{|f|^{3/2}}\sum_{\substack{C|(fh)^\infty\\d(C)\leq g}}\frac{1}{|C|^2},
\end{align}
\begin{align}\label{Ske}
&S_{{\bf C};1}^{\textrm{e}}(h;N;V\ne0)=\frac{q}{(q-1)|h|}\sum_{\substack{d(f)\leq N\\d(fh)\ \textrm{even}}}\frac{\tau_{\bf C}(f)}{|f|^{3/2}}\sum_{\substack{C|(fh)^\infty\\d(C)\leq g}}\frac{1}{|C|^2}\nonumber\\
&\qquad\qquad\times\bigg(q\sum_{V\in\mathcal{M}_{\leq d(fh)-2g-3+2d(C)}}G(V,\chi_{fh})-\sum_{V\in\mathcal{M}_{\leq d(fh)-2g-2+2d(C)}}G(V,\chi_{fh})\bigg),
\end{align}
and
\begin{align}
S_{{\bf C};1}^{\textrm{o}}(h;N)=\frac{q^{3/2}}{(q-1)|h|}\sum_{\substack{d(f)\leq N\\d(fh)\ \textrm{odd}}}\frac{ \tau_{\bf C}(f)}{|f|^{3/2}}\sum_{\substack{C|(fh)^\infty\\d(C)\leq g}}\frac{1}{|C|^2}\sum_{V\in\mathcal{M}_{d(fh)-2g-2+2d(C)}}G(V,\chi_{fh}).
\label{s1odd}
\end{align}
We also decompose
\[
S_{{\bf C};1}^{\textrm{e}}(h;N;V\ne0)=S_{{\bf C};1}^{\textrm{e}}(h;N;V=\square)+S_{{\bf C};1}^{\textrm{e}}(h;N;V\ne\square)
\]
correspondingly to whether $V$ is a square or not.

We treat $S_{{\bf C};2}(h;N)$ similarly and define the functions $M_{{\bf C};2}(h;N)$, $S_{{\bf C};2}^{\textrm{o}}(h;N)$ and $S_{{\bf C};2}^{\textrm{e}}(h;N;V=\square)$, $S_{{\bf C};2}^{\textrm{e}}(h;N;V\ne\square)$ in the same way. Further denote 
\[
M_{{\bf C}}(h;N)=M_{{\bf C};1}(h;N)-qM_{{\bf C};2}(h;N)
\] 
and
$$S_{{\bf C}}^{\textrm{e}}(h;N;V=\square)=S_{{\bf C};1}^{\textrm{e}}(h;N;V=\square)-qS_{{\bf C};2}^{\textrm{e}}(h;N;V=\square).$$ 

For the terms $S_{{\bf C};1}^{\textrm{o}}(h;N)$ and $S_{{\bf C};2}^{\textrm{o}}(h;N)$, we note that the summations over $V$ are over odd degree polynomials, so $V\ne\square$ in these cases. Let 
\begin{align}\label{Sknonsquare}
S_{{\bf C}}(h;N;V \neq \square) &=\big(S_{{\bf C};1}^{\textrm{o}}(h;N)-qS_{{\bf C};2}^{\textrm{o}}(h;N)\big)\nonumber\\
&\qquad\qquad+\big(S_{{\bf C};1}^{\textrm{e}}(h;N;V\ne\square)-qS_{{\bf C};2}^{\textrm{e}}(h;N;V\ne\square)\big) 
\end{align} be the total contribution from $V \neq \square$. In Section \ref{sectionnonsquare} we will prove the bound
\begin{equation}\label{boundfornonsquare}
 S_{{\bf C}}(h;N;V \neq \square) \ll_\varepsilon |h|^{1/2} q^{N/2-N\min\{\Re \gamma_j\} -2g+ \varepsilon g}.
 \end{equation} 

We shall next consider $M_{{\bf C}}(h;N)$. The term $S_{{\bf C}}^{\textrm{e}}(h;N;V=\square)$ also contributes to the main term for $k \geq 2$ and will be evaluated in Section \ref{Vsquare}.

\subsection{Evaluate $M_{{\bf C}}(h;N)$}
\label{main}
Note that we can remove the condition $d(C)\leq g$ in the  sum over $C$ in \eqref{M} at the expense of an error of size $O_\varepsilon(q^{N/2-N\min\{\Re \gamma_j\}-2g+\varepsilon g})$ using the Rankin trick,
\begin{equation*}
\sum_{\substack{C|(fh)^\infty\\d(C)> g}}\frac{1}{|C|^2}< \sum_{C|(fh)^\infty}\frac{1}{|C|^2}\Big(\frac{|C|}{q^{g}}\Big)^{2-\varepsilon}=q^{-2g+\varepsilon g}\prod_{P|fh}\bigg(1-\frac{1}{|P|^\varepsilon}\bigg)^{-1},
\end{equation*}
and the fact that $|\tau_{\bf C}(f)|\leq |f|^{-\min\{\Re \gamma_j\}}\tau_k(f)$. 
So
\begin{align*}
M_{{\bf C};1}(h;N)&=\frac{q}{(q-1)|h|}\sum_{\substack{d(f)\leq N\\fh=\square}}\frac{ \tau_{\bf C}(f)\varphi(fh)}{|f|^{3/2}}\prod_{P|fh}\bigg(1-\frac{1}{|P|^2}\bigg)^{-1}\\
&\qquad\qquad+O_\varepsilon\big(q^{N/2-N\min\{\Re \gamma_j\}-2g+\varepsilon g}\big)\\
&=\frac{q}{(q-1)\sqrt{|h_1|}}\sum_{\substack{2d(f)\leq N-d(h_1)}}\frac{a(fh) \tau_{\bf C}(f^2h_1)}{|f|}+O_\varepsilon\big(q^{N/2-N\min\{\Re \gamma_j\}-2g+\varepsilon g}\big),
\end{align*}
where
\[
a(f)=\prod_{P|f}\bigg(1+\frac{1}{|P|}\bigg)^{-1},
\]
by  a change of variables $f\rightarrow f^2h_1$. A similar argument holds for $M_{{\bf C};2}(h;N)$ and we obtain
\begin{align}
M_{{\bf C}}(h;N)&=\frac{1}{\sqrt{|h_1|}}\sum_{2d(f)\leq N-d(h_1)}\frac{a(fh)\tau_{\bf C}(f^2h_1)}{|f|}+O_\varepsilon\big(q^{N/2-N\min\{\Re \gamma_j\}-2g+\varepsilon g}\big).\label{mc}
\end{align}

Using an analogue of Perron's formula in the form
\[
\sum_{2n\leq N} g(n)=\frac{1}{2\pi i}\int_{|u|=r}\Big(\sum_{n=0}^{\infty}g(n)u^{2n}\Big)\frac{du}{u^{N+1}(1-u)},
\]
where $r$ is such that $\sum_{n=0}^{\infty} g(n) u^n$ converges absolutely in $|u| \leq r<1$, 
leads to
\begin{align*}
M_{{\bf C}}(h;N)=\frac{1}{\sqrt{|h_1|}}\frac{1}{2\pi i}\oint_{|u|=r}\frac{\mathcal{F}_{\bf C}(u)du}{u^{N-d(h_1)+1}(1-u)}+O_\varepsilon\big(q^{N/2-N\min\{\Re \gamma_j\}-2g+\varepsilon g}\big),
\end{align*}
where
\begin{align*}
\mathcal{F}_{\bf C}(u)&=\sum_{f\in\mathcal{M}}\frac{a(fh)\tau_{\bf C}(f^2h_1)}{|f|}u^{2d(f)}.
\end{align*}
Now by multiplicativity we have
\[
\mathcal{F}_{\bf C}(u)=\mathcal{A}_{{\bf C}}(u)\mathcal{B}_{{\bf C}}(h;u)\prod_{1\leq i\leq j\leq k}\mathcal{Z}\Big(\frac{u^2}{q^{1+\gamma_i+\gamma_j}}\Big),
\]
where $\mathcal{A}_{{\bf C}}(u)$ and $
\mathcal{B}_{{\bf C}}(h;u)$ are defined in \eqref{formulaA} and \eqref{formulaB}. 
Thus,
\begin{align}\label{2001}
M_{\bf C}(h;N)&=\frac{1}{\sqrt{|h_1|}}\frac{1}{2\pi i}\oint_{|u|=r}\frac{\mathcal{A}_{{\bf C}}(u)\mathcal{B}_{{\bf C}}(h;u)du}{u^{N-d(h_1)+1}(1-u)\prod_{1\leq i\leq j\leq k}(1-q^{-(\gamma_i+\gamma_j)}u^2)}\\
&\qquad\qquad+O_\varepsilon\big(q^{N/2-N\min\{\Re \gamma_j\}-2g+\varepsilon g}\big).\nonumber
\end{align}

We have $\mathcal{A}_{{\bf C}}(u)$ converges absolutely for $|u|< q^{1/2+\min\{\Re \gamma_j\}}$. We move the contour of integration to $|u|=q^{1/2+\min\{\Re \gamma_j\}-\varepsilon}$, encountering simple poles at $u=1$ and $u=\pm q^{(\gamma_i+\gamma_j)/2}$ for every $1\leq i\leq j\leq k$. 
The integral over the new contour is trivially bounded by $O_\varepsilon(|h_1|^{\min\{\Re \gamma_j\}} q^{-N/2-N\min\{\Re \gamma_j\}+\varepsilon g})$ so 
\begin{align*}
M_{\bf C}(h;N)&=-\textrm{Res}(u=1) -\sum_{1\leq i\leq j\leq k}\textrm{Res}\big(u=\pm q^{(\gamma_i+\gamma_j)/2}\big)\\
&\qquad +O_\varepsilon\big(q^{N/2-N\min\{\Re \gamma_j\}-2g+\varepsilon g}\big)+O_\varepsilon\big(|h_1|^{\min\{\Re \gamma_j\}} q^{-N/2-N\min\{\Re \gamma_j\}+\varepsilon g}\big).
\end{align*}

Standard calculations give
\begin{equation}\label{2002}
\textrm{Res}(u=1)=-\frac{\mathcal{A}_{{\bf C}}(1)\mathcal{B}_{{\bf C}}(h;1)}{\sqrt{|h_1|}}\prod_{1\leq i\leq j\leq k}\zeta_q(1+\gamma_i+\gamma_j)=-\frac{\widetilde{\mathcal{S}}_{\bf C}(h)}{\sqrt{|h_1|}}
\end{equation}
and
\begin{align}\label{2003}
&\textrm{Res}\big(u=q^{(\gamma_i+\gamma_j)/2}\big)+\textrm{Res}\big(u=- q^{(\gamma_i+\gamma_j)/2}\big)=-\frac{\mathcal{A}_{{\bf C}}\big(q^{(\gamma_i+\gamma_j)/2}\big)\mathcal{B}_{{\bf C}}\big(h;q^{(\gamma_i+\gamma_j)/2}\big)}{\sqrt{|h_1|}}\nonumber\\
&\qquad\qquad\times q^{-\left[\frac{N-d(h_1)}{2}\right](\gamma_i+\gamma_j)}\zeta_q(1-\gamma_i-\gamma_j)\prod_{\substack{1\leq i'\leq j'\leq k\\(i',j')\ne (i,j)}}\zeta_q(1+\gamma_i'+\gamma_j'-\gamma_i-\gamma_j)
\end{align}
 for every $1\leq i\leq j\leq k$. Thus,
\begin{align}\label{MCformula}
M_{\bf C}(h;N)&=\frac{\widetilde{\mathcal{S}}_{\bf C}(h)}{\sqrt{|h_1|}}+\sum_{1\leq i\leq j\leq k}M_{\bf C}^{i,j}(h;N)+O_\varepsilon\big(q^{N/2-N\min\{\Re \gamma_j\}-2g+\varepsilon g}\big)\\
&\qquad\qquad+O_\varepsilon\big(|h_1|^{\min\{\Re \gamma_j\}} q^{-N/2-N\min\{\Re \gamma_j\}+\varepsilon g}\big),\nonumber
\end{align}
where
\begin{align}\label{Mịformula}
M_{\bf C}^{i,j}(h;N)&=\frac{\mathcal{A}_{{\bf C}}\big(q^{(\gamma_i+\gamma_j)/2}\big)\mathcal{B}_{{\bf C}}\big(h;q^{(\gamma_i+\gamma_j)/2}\big)}{\sqrt{|h_1|}} q^{-\left[\frac{N-d(h_1)}{2}\right](\gamma_i+\gamma_j)}\zeta_q(1-\gamma_i-\gamma_j)\nonumber\\
&\qquad\qquad\times\prod_{\substack{1\leq i'\leq j'\leq k\\(i',j')\ne (i,j)}}\zeta_q(1+\gamma_i'+\gamma_j'-\gamma_i-\gamma_j).
\end{align}

\subsection{Evaluate $S_{\bf C}^{\textrm{e}}(h;N;V=\square)$}\label{Vsquare}

First we note that as in the previous subsection we can extend the sum over $C$ in \eqref{Ske} to infinity, at the expense of an error of size $O_\varepsilon(q^{N/2-N\min\{\Re \gamma_j\}-2g+\varepsilon g})$. 
 So
\begin{align}
&S_{\bf C}^{\textrm{e}}(h;N;V=\square)=\frac{q}{(q-1)|h|}\sum_{\substack{d(f)\leq N\\d(fh)\ \textrm{even}}}\frac{\tau_{\bf C}(f)}{|f|^{3/2}}\sum_{C|(fh)^\infty}\frac{1}{|C|^2} \nonumber \\
&\qquad\quad\times\bigg(q\sum_{V\in\mathcal{M}_{\leq d(fh)/2-g-2+d(C)}}G(V^2,\chi_{fh})-2\sum_{V\in\mathcal{M}_{\leq d(fh)/2-g-1+d(C)}}G(V^2,\chi_{fh}) \nonumber \\
&\qquad\qquad\qquad\qquad+\frac1q\sum_{V\in\mathcal{M}_{\leq d(fh)/2-g+d(C)}}G(V^2,\chi_{fh})\bigg)+O_\varepsilon\big(q^{N/2-N\min\{\Re \gamma_j\}-2g+\varepsilon g}\big). \nonumber
\end{align}
 Applying the Perron formula in the form
\[
\sum_{n\leq N}g(n)=\frac{1}{2\pi i}\int_{|u|=r_1}\Big(\sum_{n=0}^{\infty}g(n)u^{n}\Big)\frac{du}{u^{N+1}(1-u)}
\] to the sums over $V$, where $r_1<1$ is such that $\sum_{n=0}^{\infty} g(n) u^n$ converges absolutely for $|u| \leq r_1$, we get
\begin{align*}
&S_{\bf C}^{\textrm{e}}(h;N;V=\square) = \frac{1}{(q-1)|h|} \sum_{\substack{d(f)\leq N\\d(fh)\ \textrm{even}}}\frac{\tau_{\bf C}(f)}{|f|^{3/2}}\sum_{C|(fh)^\infty}\frac{1}{|C|^2} \frac{1}{2 \pi i} \oint_{|u|=r_1}u^{-d(f)/2-d(C)}\\
&\qquad\qquad\times \Big( \sum_{V \in \mathcal{M}} G(V^2,\chi_{f h})\, u^{d(V)}\Big) \frac{(1-qu)^2du}{ u^{d(h)/2-g+1}(1-u) }+O_\varepsilon\big(q^{N/2-N\min\{\Re \gamma_j\}-2g+\varepsilon g}\big).
\end{align*}
Another application of the Perron formula, this time in the form
\[
\sum_{\substack{n\leq N\\n+m\ \textrm{even}}}g(n)=\frac{1}{2\pi i}\int_{|w|=r_2}\Big(\sum_{n=0}^{\infty}g(n)w^{n}\Big)\delta(m,N;w)\frac{dw}{w^{N+1}},
\] 
where
\begin{equation}\label{identity1}
\delta(m,N;w)=\frac12\bigg(\frac{1}{1-w}+\frac{(-1)^{N-m}}{1+w}\bigg)=\frac{w^{N-m-2\left[\frac{N-m}{2}\right]}}{1-w^2},
\end{equation}to the sum over $f$ yields
\begin{align}
&S_{\bf C}^{\textrm{e}}(h;N;V=\square) = \frac{1}{(q-1)|h|} \frac{1}{(2 \pi i)^2} \oint_{|u|=r_1}  \oint_{|w|=r_2} \nonumber \\
&\qquad \frac{\mathcal{N}_{\bf C} (h;u,w) (1-qu)^2 dwdu }{u^{\left[\frac{N+d(h)}{2}\right]-g+1}w^{2\left[\frac{N-d(h_1)}{2}\right]+d(h_1)+1}(1-u) (1-uw^2)} +O_\varepsilon\big(q^{N/2-N\min\{\Re \gamma_j\}-2g+\varepsilon g}\big), \label{second_error}
\end{align} where $r_2<1$ and
$$ \mathcal{N}_{\bf C} (h;u,w) = \sum_{f,V \in \mathcal{M}} \frac{\tau_{\bf C}(f) G(V^2,\chi_{f h})}{|f|^{3/2}} \prod_{P | f h} \bigg(1- \frac{1}{|P|^2u^{d(P)}} \bigg)^{-1} u^{d(V)} w^{d(f)}. $$

Our next step is to write $ \mathcal{N}_{\bf C} (h;u,w)$ as an Euler product. From Lemma \ref{L2} we have
\begin{align*}
&\sum_{f \in \mathcal{M}}   \frac{\tau_{\bf C}(f) G(V^2,\chi_{fh})}{|f|^{3/2}} \prod_{P|f h} \bigg(1-\frac{1}{|P|^2 u^{d(P)}} \bigg)^{-1}w^{d(f)} \\
&\quad = \prod_{P |h}  \bigg(1-\frac{1}{|P|^2 u^{d(P)}} \bigg)^{-1} \prod_{P \nmid  h V} \bigg( 1+\sum_{\gamma\in\bf C}\frac{ w^{d(P)}}{|P|^{1+\gamma}}\bigg(1-\frac{1}{|P|^2 u^{d(P)}} \bigg)^{-1}\bigg) \\
& \, \, \, \quad\ \ \prod_{\substack{P\nmid h \\P|V}} \bigg( 1+ \sum_{j=1}^{\infty} \frac{ \tau_{\bf C}(P^j) G(V^2,\chi_{P^j}) w^{j d(P)}}{|P|^{3j/2} }\bigg(1-\frac{1}{|P|^2 u^{d(P)}} \bigg)^{-1} \bigg) \\
& \, \, \, \,\,  \quad\ \prod_{\substack{ P | h\\P \nmid V }} G(V^2,\chi_{P^{\ord_P(h)}})\prod_{\substack{ P | h\\P | V }} \bigg( G(V^2,\chi_{P^{\ord_P(h)}}) + \sum_{j=1}^{\infty} \frac{\tau_{\bf C}(P^j) G(V^2,\chi_{P^{j+\ord_P(h)}}) w^{jd(P)}}{|P|^{3j/2}} \bigg) .
\end{align*} Note that if $P| h_2$ and $P \nmid V$, then the above expression is $0$. Hence we must have that $ (\prod_{P | h_2} P )| V$. Moreover, from the last Euler factor above, note that we must have $h_2 | V$, so we denote $V= h_2 V_1$. 
Using Lemma \ref{L2}, we then rewrite 
$$ \prod_{\substack{P | h \\ P \nmid V}} G(V^2,\chi_{P^{\ord_P(h)}}) =\prod_{\substack{P | h_1 \\ P \nmid h_2}} |P|^{1/2} \prod_{\substack{P | h_1 \\ P \nmid h_2 \\ P |V_1}} |P|^{-1/2}.$$
By multiplicativity we then obtain
\begin{align}\label{theEulerproduct}
& \mathcal{N}_{\bf C}  (h;u,w)  =  u^{d(h_2)}\prod_{P\in\mathcal{P}} \bigg( 1- \frac{1}{|P|^2 u^{d(P)}} \bigg)^{-1}  \nonumber\\
&\ \prod_{P \nmid h} \bigg(1-\frac{1}{|P|^2u^{d(P)}}+\sum_{\gamma\in\bf C}\frac{ w^{d(P)}}{|P|^{1+\gamma}}\nonumber\\
&\qquad\qquad\qquad\qquad\qquad+ \sum_{i=1}^{\infty} u^{i d(P)} \bigg( 1-\frac{1}{|P|^2u^{d(P)}}+ \sum_{j=1}^{\infty} \frac{\tau_{\bf C}(P^j) G(P^{2i},\chi_{P^j}) w^{j d(P)}}{|P|^{3j/2}}\bigg) \bigg)\nonumber \\
&\ \ \prod_{P | h_1} \Bigg(|P|^{1/2+2 \ord_P(h_2)}+\sum_{i=1}^{\infty}u^{i d(P)} \sum_{j=0}^{\infty} \frac{\tau_{\bf C}(P^j) G( P^{2i+ 2 \ord_P(h_2)} , \chi_{P^{j+1+2 \ord_P(h_2)}}) w^{jd(P)}}{|P|^{3j/2}} \Bigg)\nonumber \\
&\ \ \ \prod_{\substack{P \nmid h_1   \\ P | h_2 }} \bigg( \varphi\big(P^{2 \ord_P(h_2)}\big) + \sum_{\gamma\in\bf C}\frac{  |P|^{2 \ord_P(h_2)} w^{d(P)}}{|P|^{1+\gamma}}\\
&\qquad\qquad\qquad\qquad\qquad\qquad+ \sum_{i=1}^{\infty} u^{i d(P)} \sum_{j=0}^{\infty} \frac{ \tau_{\bf C}(P^j) G( P^{2i+ 2 \ord_P(h_2)} , \chi_{P^{j+2 \ord_P(h_2)}}) w^{jd(P)}}{|P|^{3j/2}}  \bigg).\nonumber
\end{align}

\subsubsection{The case $k=1$}
We have 
\begin{align*}
\mathcal{N}_{\gamma_1}  (h;u,w) &= \frac{|h|u^{d(h_2)}}{\sqrt{|h_1 |}}\mathcal{C}_{\gamma_1}(u,w)\mathcal{D}_{\gamma_1}(h;u,w)\\
&\qquad\qquad\times\mathcal{Z}(u) \mathcal{Z}\Big(\frac {w}{q^{1+\gamma_1}}\Big)\mathcal{Z}\Big(\frac {w^2}{q^{2+2\gamma_1}}\Big)^{-1}  \mathcal{Z}\Big(\frac {uw^2}{q^{1+2\gamma_1}}\Big)\mathcal{Z}\Big(\frac {w}{q^{3+\gamma_1}u}\Big) ,
\end{align*} where 
\begin{align*}
\mathcal{C}_{\gamma_1}(u,w)&=\prod_P\bigg(1-\frac{1}{|P|^2u^{d(P)}}\bigg)^{-1}\bigg(1+\frac{w^{d(P)}}{|P|^{1+\gamma_1}}\bigg)^{-1}\bigg(1-\frac{w^{d(P)}}{|P|^{3+\gamma_1}u^{d(P)}}\bigg)\\
&\qquad\qquad\times \bigg(1+\frac{w^{d(P)}(1-u^{d(P)})}{|P|^{1+\gamma_1}}-\frac{1}{|P|^2u^{d(P)}}-\frac{(uw^2)^{d(P)}}{|P|^{2+2\gamma_1}}+\frac{w^{2d(P)}}{|P|^{3+2\gamma_1}}\bigg)
\end{align*}
and
\begin{align*}
&\mathcal{D}_{\gamma_1}(h;u,w)=\prod_{P|h}\bigg(1+\frac{w^{d(P)}(1-u^{d(P)})}{|P|^{1+\gamma_1}}-\frac{1}{|P|^2u^{d(P)}}-\frac{(uw^2)^{d(P)}}{|P|^{2+2\gamma_1}}+\frac{w^{2d(P)}}{|P|^{3+2\gamma_1}}\bigg)^{-1}\\
&\qquad\qquad\times \prod_{P | h_1} \bigg(1-u^{d(P)}+\frac{(uw)^{d(P)}}{|P|^{\gamma_1}}-\frac{(uw)^{d(P)}}{|P|^{1+\gamma_1}}\bigg) \prod_{\substack{P \nmid h_1 \\ P | h_2}}\bigg(1-\frac{1}{|P|}+\frac{w^{d(P)}-(uw)^{d(P)}}{|P|^{1+\gamma_1}}\bigg).
\end{align*}
Note that
$$
\mathcal{C}_{\gamma_1}(u,w)=1-\frac{(uw)^{d(P)}}{|P|^{1+\gamma_1}}+\frac{w^{2d(P)}}{|P|^{3+2\gamma_1}}-\frac{w^{2d(P)}}{|P|^{4+2\gamma_1}u^{d(P)}}+\frac{w^{d(P)}}{|P|^{5+\gamma_1}u^{2d(P)}}-\frac{2w^{2d(P)}}{|P|^{6+2\gamma_1}u^{2d(P)}}+\ldots,
$$
so $\mathcal{C}_{\gamma_1}(u,w)$ converges absolutely  for $|u|>1/q^2$, $|uw|<q^{\Re \gamma_1}$, $|w|<q^{1+\Re \gamma_1}$, $|w|^2<q^{3+2\Re \gamma_1}|u|$ and $|w|<q^{4+\Re \gamma_1}|u|^2$. 
So moving the $u$-contour to $|u|=r_1=q^{-3/2+\varepsilon}$ we get 
\begin{align}
S_{\gamma_1}^{\textrm{e}}(h;N;V=\square)& = \frac{1}{(q-1)\sqrt{|h_1|}}  \frac{1}{(2 \pi i)^2}\oint_{|u|=r_1}  \oint_{|w|=r_2} \nonumber  \\
&\qquad \frac{\mathcal{C}_{\gamma_1}(u,w)\mathcal{D}_{\gamma_1}(h;u,w) (1-qu)(1-q^{-(1+2\gamma_1)}w^2)  }{(1-u) (1-q^{-\gamma_1}w)(1-uw^2)(1-q^{-2\gamma_1}uw^2)(u-q^{-(2+\gamma_1)}w)}  \nonumber \\
&\qquad\qquad\times\frac{dwdu}{u^{\left[\frac{N+d(h_1)}{2}\right]-g}w^{2\left[\frac{N-d(h_1)}{2}\right]+d(h_1)+1}}+O_\varepsilon\big(q^{-3g/2-g\Re \gamma_1+\varepsilon g}\big). \label{second_explicit}
\end{align}
We enlarge the contour of integration over $w$ to $|w| = q^{3/4+\min\{0,\Re \gamma_1\}-2\varepsilon}$, encountering a simple pole at $w=q^{\gamma_1}$ 
and another simple pole at $w=q^{2+\gamma_1}u$, as $\Re \gamma_1<1/4$. 
The new integral is $$\ll_\varepsilon |h_1|^{1/4}q^{-3g/2-g\min\{0,\Re \gamma_1\}+\varepsilon g }.$$  Hence
\begin{align}\label{6003}
S_{\gamma_1}^{\textrm{e}}(h;N;V=\square) &= S_{\gamma_1}^{\textrm{e};1}(h;N) +S_{\gamma_1}^{\textrm{e};2}(h;N)+O_\varepsilon\big(|h_1|^{1/4}q^{-3g/2-g\min\{0,\Re \gamma_1\}+\varepsilon g }\big),
\end{align}
where
\begin{align}\label{secondmaintermk=1}
S_{\gamma_1}^{\textrm{e};1}(h;N)& = \frac{q^{-2\gamma_1\left[\frac{N-d(h_1)}{2}\right]-1}}{|h_1|^{1/2+\gamma_1}}  \frac{1}{2 \pi i}\oint_{|u|=r_1} \frac{\mathcal{C}_{\gamma_1}(u,q^{\gamma_1})\mathcal{D}_{\gamma_1}(h;u,q^{\gamma_1}) (1-qu) du }{u^{\left[\frac{N+d(h_1)}{2}\right]-g}(1-u)^2(1-q^{2\gamma_1}u)(u-q^{-2})}
\end{align} 
and
\begin{align}
S_{\gamma_1}^{\textrm{e};2}(h;N)& = \frac{q^{-2(2+\gamma_1)\left[\frac{N-d(h_1)}{2}\right]}}{(q-1)|h_1|^{5/2+\gamma_1}}  \frac{1}{2 \pi i}\oint_{|u|=r_1} \frac{\mathcal{C}_{\gamma_1}(u,q^{2+\gamma_1}u)\mathcal{D}_{\gamma_1}(h;u,q^{2+\gamma_1}u) (1-qu)(1-q^{3}u^2)  }{(1-u) (1-q^{2}u)(1-q^{4+2\gamma_1}u^3)(1-q^{4}u^3)}\nonumber \\
&\qquad\qquad\times\frac{du}{u^{\left[\frac{N+d(h_1)}{2}\right]+2\left[\frac{N-d(h_1)}{2}\right]-g+d(h_1)+1}}.\label{s2}
\end{align}

We first evaluate $S_{\gamma_1}^{\textrm{e};2}(h;N)$. We shift the contour of integration to $|u| =q^{-1-\varepsilon}$, encountering a simple pole at $u=q^{-4/3}$ and another simple pole at $u=q^{-(4+2\gamma_1)/3}$, again as $|\Re \gamma_1|<1/4$. The integral over the new contour is bounded by $O_\varepsilon(q^{-3g/2-g\Re \gamma_1+\varepsilon g})$. So
\begin{align}\label{3002}
S_{\gamma_1}^{\textrm{e};2}(h;N)&=-\text{Res}(u=q^{-4/3})-\text{Res}(u=q^{-(4+2\gamma_1)/3})+O_\varepsilon\big(q^{-3g/2-g\Re \gamma_1+\varepsilon g}\big)\nonumber\\
&=E_{\gamma_1}(h;N)+O_\varepsilon\big(q^{-3g/2-g\Re \gamma_1+\varepsilon g}\big),
\end{align}
say. Here for $N\in\{g,g-1\}$ we have
\[
E_{\gamma_1}(h;N)\asymp |h_1|^{1/6}q^{-4g/3-g\Re \gamma_1+\varepsilon}+|h_1|^{1/6+\Re \gamma_1/3}q^{-4g/3-2g\Re \gamma_1/3+\varepsilon}.
\]

Next we evaluate $S_{\gamma_1}^{\textrm{e};1}(h;N)$. By a change of variables $u\rightarrow q^{2\alpha_1}/ u^2$ in \eqref{secondmaintermk=1} we get
\begin{align*}
&q^{-2g\alpha_1}S_{-\alpha_1}^{\textrm{e};1}(h;g-1)\\
&\qquad= \frac{q^{1+2\alpha_1}}{|h_1|^{1/2+\alpha_1}}  \frac{1}{2 \pi i}\oint_{|u|=r_1'} \frac{\mathcal{C}_{-\alpha_1}(q^{2\alpha_1}/ u^2,q^{-\alpha_1})\mathcal{D}_{-\alpha_1}(h;q^{2\alpha_1}/ u^2,q^{-\alpha_1}) (q^{1+2\alpha_1}-u^2) du }{u^{2g-2\left[\frac{g-1+d(h_1)}{2}\right]-3}(1-u^2)(q^{2\alpha_1}-u^2)^2(q^{2\alpha_1+2}-u^2)},
\end{align*}
where $r_1'=q^{3/4+\Re \alpha_1-\varepsilon/2}$. It is standard to verify that
\[
\mathcal{C}_{-\alpha_1}\Big(\frac{q^{2\alpha_1}}{u^2},q^{-\alpha_1}\Big)=\mathcal{Z}\Big(\frac{1}{q^{1-2\alpha_1}u^2}\Big)^{-1}\mathcal{Z}\Big(\frac{u^2}{q^{3+2\alpha_1}}\Big)^{-1}\mathcal{Z}\Big(\frac{u^2}{q^{2+2\alpha_1}}\Big)\mathcal{A}_{\alpha_1}(u)
\]
and
\[
\mathcal{D}_{-\alpha_1}\Big(h;\frac{q^{2\alpha_1}}{u^2},q^{-\alpha_1}\Big)=|h_1|^{\alpha_1}\mathcal{B}_{\alpha_1}(h;u).
\]
So we obtain that
\begin{align}\label{2000}
&q^{-2g\alpha_1}S_{-\alpha_1}^{\textrm{e};1}(h;g-1)=- \frac{1}{\sqrt{|h_1|}}  \frac{1}{2 \pi i}\oint_{|u|=r_1'} \frac{\mathcal{A}_{\alpha_1}(u) \mathcal{B}_{\alpha_1}(h;u) du }{u^{2g-2\left[\frac{g-1+d(h_1)}{2}\right]-1}(1-u^2)(1-q^{-2\alpha_1}u^2)}\nonumber\\
&\qquad\qquad=- \frac{1}{2\sqrt{|h_1|}} \bigg( \frac{1}{2 \pi i}\oint_{|u|=r_1'} \frac{\mathcal{A}_{\alpha_1}(u) \mathcal{B}_{\alpha_1}(h;u) du }{u^{g-d(h_1)+1}(1-u)(1-q^{-2\alpha_1}u^2)}\nonumber\\
&\qquad\qquad\qquad\qquad\qquad\qquad\qquad+\frac{1}{2 \pi i}\oint_{|u|=r_1'} \frac{(-1)^{g-d(h_1)}\mathcal{A}_{\alpha_1}(u) \mathcal{B}_{\alpha_1}(h;u) du }{u^{g-d(h_1)+1}(1+u)(1-q^{-2\alpha_1}u^2)}\bigg)\\
&\qquad\qquad=- \frac{1}{\sqrt{|h_1|}} \frac{1}{2 \pi i}\oint_{|u|=r_1'} \frac{\mathcal{A}_{\alpha_1}(u) \mathcal{B}_{\alpha_1}(h;u) du }{u^{g-d(h_1)+1}(1-u)(1-q^{-2\alpha_1}u^2)}\nonumber,
\end{align}
by making the change of variables $u \rightarrow -u$ in the integral in \eqref{2000} and using the fact that $\mathcal{A}_{\alpha_1}(u)$, $\mathcal{B}_{\alpha_1}(h;u)$ are even functions.

Combining with \eqref{2001} we have
\begin{align}\label{3000}
&M_{\alpha_1}(h;g)+q^{-2g\alpha_1}S_{-\alpha_1}^{\textrm{e};1}(h;g-1)\nonumber\\
&\quad=\frac{1}{\sqrt{|h_1|}} \frac{1}{2 \pi i}\bigg(\oint_{|u|=r}-\oint_{|u|=r_1'}\bigg) \frac{\mathcal{A}_{\alpha_1}(u) \mathcal{B}_{\alpha_1}(h;u) du }{u^{g-d(h_1)+1}(1-u)(1-q^{-2\alpha_1}u^2)}+O_\varepsilon\big(q^{-3g/2-g\Re \alpha_1+\varepsilon g}\big)\nonumber\\
&\quad=-\text{Res}(u=1)-\text{Res}(u=\pm q^{\alpha_1})+O_\varepsilon\big(q^{-3g/2-g\Re \alpha_1+\varepsilon g}\big)\nonumber\\
&\quad=\frac{\widetilde{\mathcal{S}}_{\alpha_1}(h)}{\sqrt{|h_1|}}+\frac{\mathcal{A}_{\alpha_1}(q^{\alpha_1})\mathcal{B}_{\alpha_1}(h;q^{\alpha_1})}{\sqrt{|h_1|}}q^{-2\alpha_1\left[\frac{g-d(h_1)}{2}\right]}\zeta_q(1-2\alpha_1)+O_\varepsilon\big(q^{-3g/2-g\Re \alpha_1+\varepsilon g}\big),
\end{align}
like in \eqref{2002} and \eqref{2003}. Similarly,
\begin{align}\label{3001}
&q^{-2g\alpha_1}M_{-\alpha_1}(h;g-1)+S_{\alpha_1}^{\textrm{e};1}(h;g)=\frac{q^{-2g\alpha_1}}{\sqrt{|h_1|}}\widetilde{\mathcal{S}}_{-\alpha_1}(h)\\
&\qquad+\frac{\mathcal{A}_{-\alpha_1}(q^{-\alpha_1})\mathcal{B}_{-\alpha_1}(h;q^{-\alpha_1})}{\sqrt{|h_1|}}q^{-2g\alpha_1+2\alpha_1\left[\frac{g-d(h_1)}{2}\right]}\zeta_q(1+2\alpha_1)+O_\varepsilon\big(q^{-3g/2-g\Re \alpha_1+\varepsilon g}\big).\nonumber
\end{align}

By \eqref{boundfornonsquare}, \eqref{6003}, \eqref{3002}, \eqref{3000} and \eqref{3001} and \eqref{switchingalpha} the total error is
\begin{align*}
&\ll_\varepsilon q^{-2g\mathfrak{a}_1\Re\alpha_1+\varepsilon g}\Big(|h|^{1/2}q^{-3g/2-g|\Re\alpha_1|}+|h_1|^{1/4}q^{-3g/2}\\
&\qquad\qquad\qquad\qquad\qquad\qquad\qquad+q^{-2g|\Re\alpha_1|}\big(|h|^{1/2}q^{-3g/2+g|\Re\alpha_1|}+|h_1|^{1/4}q^{-3g/2+g|\Re\alpha_1|}\big)\Big)\\
&\ll_\varepsilon |h|^{1/2}q^{-3g/2+g|\Re\alpha_1|+\varepsilon g}+|h_1|^{1/4}q^{-3g/2+2g|\Re\alpha_1|+\varepsilon g},
\end{align*}
and we obtain the error term in \eqref{formulaE1}.

To prove Theorem \ref{theorem2} for $k=1$ we are left to verify that
\begin{align*}
&\frac{\mathcal{A}_{\alpha_1}(q^{\alpha_1})\mathcal{B}_{\alpha_1}(h;q^{\alpha_1})}{\sqrt{|h_1|}} q^{-2\alpha_1\left[\frac{g-d(h_1)}{2}\right]}\zeta_q(1-2\alpha_1)\\
&\qquad\qquad+\frac{\mathcal{A}_{-\alpha_1}(q^{-\alpha_1})\mathcal{B}_{-\alpha_1}(h;q^{-\alpha_1})}{\sqrt{|h_1|}} q^{-2g\alpha_1+2\alpha_1\left[\frac{g-1-d(h_1)}{2}\right]}\zeta_q(1+2\alpha_1)=0
\end{align*}
This is easily seen to be the case by noticing that
\[
\mathcal{A}_{\alpha_1}(q^{\alpha_1})=\mathcal{A}_{-\alpha_1}(q^{-\alpha_1}),\qquad \mathcal{B}_{\alpha_1}(h;q^{\alpha_1})=|h_1|^{-2\alpha_1}\mathcal{B}_{-\alpha_1}(h;q^{-\alpha_1}),
\]
\[
\zeta_q(1-2\alpha_1)=-q^{-2\alpha_1}\zeta_q(1+2\alpha_1)\qquad\text{and}\qquad \Big[\frac{n}{2}\Big]+\Big[\frac{n+1}{2}\Big]=n.
\]

\subsubsection{The case $k=2$}
An exercise with the Euler product \eqref{theEulerproduct}  shows that
\begin{align*}
\mathcal{N}_{\gamma_1,\gamma_2}  (h;u,w) &= \frac{|h|u^{d(h_2)}}{\sqrt{|h_1 |}}\mathcal{C}_{\gamma_1,\gamma_2}(u,w)\mathcal{D}_{\gamma_1,\gamma_2}(h;u,w) \\
&\qquad\qquad \times \mathcal{Z} \Big( \frac{1}{q^2u} \Big) \mathcal{Z}(u)\mathcal{Z}\Big(\frac {w}{q^{1+\gamma_1}}\Big)\mathcal{Z}\Big(\frac {w}{q^{1+\gamma_2}}\Big) \mathcal{Z}\Big(\frac{uw^2}{q^{1+2\gamma_1}}\Big) \mathcal{Z}\Big(\frac{uw^2}{q^{1+2\gamma_2}}\Big),
\end{align*}
 where 
\begin{align*}
&\mathcal{C}_{\gamma_1,\gamma_2}(u,w) =\prod_{P\in\mathcal{P}}\bigg( 1-\frac{ w^{d(P)}}{|P|^{1+\gamma_1}}\bigg)\bigg( 1-\frac{ w^{d(P)}}{|P|^{1+\gamma_2}}\bigg)\\
&\quad \times\bigg(1 +\frac{w^{d(P)}\big(1-u^{d(P)}\big)}{|P|^{1+\gamma_1}}+\frac{w^{d(P)}\big(1-u^{d(P)}\big)}{|P|^{1+\gamma_2}}+\frac{(uw^2)^{d(P)}}{|P|^{1+\gamma_1+\gamma_2}}-\frac{(uw^2)^{d(P)}}{|P|^{2+2\gamma_1}}-\frac{(uw^2)^{d(P)}}{|P|^{2+2\gamma_2}}\\
&\qquad\qquad-\frac{(uw^2)^{d(P)}}{|P|^{2+\gamma_1+\gamma_2}}-\frac{1}{|P|^2u^{d(P)}}+\frac{w^{ 2d(P)}}{|P|^{3+2\gamma_1}}+\frac{w^{ 2d(P)}}{|P|^{3+2\gamma_2}}+\frac{(uw^2)^{2d(P)}}{|P|^{3+2\gamma_1+2\gamma_2}}-\frac{(uw^4)^{d(P)}}{|P|^{4+2\gamma_1+2\gamma_2}}\bigg)
 \end{align*}
 and $\mathcal{D}_{\gamma_1,\gamma_2}(h;u,w)$ is given by
\begin{align*}
&\prod_{P|h}\bigg(1 +\frac{w^{d(P)}\big(1-u^{d(P)}\big)}{|P|^{1+\gamma_1}}+\frac{w^{d(P)}\big(1-u^{d(P)}\big)}{|P|^{1+\gamma_2}}+\frac{(uw^2)^{d(P)}}{|P|^{1+\gamma_1+\gamma_2}}-\frac{1}{|P|^2u^{d(P)}}-\frac{(uw^2)^{d(P)}}{|P|^{2+2\gamma_1}}\\
&\qquad\quad-\frac{(uw^2)^{d(P)}}{|P|^{2+2\gamma_2}}-\frac{(uw^2)^{d(P)}}{|P|^{2+\gamma_1+\gamma_2}}+\frac{w^{ 2d(P)}}{|P|^{3+2\gamma_1}}+\frac{w^{ 2d(P)}}{|P|^{3+2\gamma_2}}+\frac{(uw^2)^{2d(P)}}{|P|^{3+2\gamma_1+2\gamma_2}}-\frac{(uw^4)^{d(P)}}{|P|^{4+2\gamma_1+2\gamma_2}}\bigg)^{-1}\\
&\quad\times \prod_{P | h_1} \bigg(1-u^{d(P)}+\frac{(uw)^{d(P)}}{|P|^{\gamma_1}}+\frac{(uw)^{d(P)}}{|P|^{\gamma_2}}-\frac{(uw)^{d(P)}}{|P|^{1+\gamma_1}}-\frac{(uw)^{d(P)}}{|P|^{1+\gamma_2}}+\frac{(uw^2)^{d(P)}\big(1-u^{d(P)}\big)}{|P|^{1+\gamma_1+\gamma_2}}\bigg)\\
&\quad\times\prod_{\substack{P \nmid h_1 \\ P | h_2}} \Big(1-\frac{1}{|P|}  +\frac{w^{d(P)}\big(1-u^{d(P)}\big)}{|P|^{1+\gamma_1}}+\frac{w^{d(P)}\big(1-u^{d(P)}\big)}{|P|^{1+\gamma_2}}+\frac{(uw^2)^{d(P)}}{|P|^{1+\gamma_1+\gamma_2}}-\frac{(uw^2)^{d(P)}}{|P|^{2+\gamma_1+\gamma_2}}\Big).\end{align*}
Note that $\mathcal{C}_{\gamma_1,\gamma_2}(u,w)$ converges absolutely  for $|u| > 1/q$, $|w| < 
q^{1/2+\min\{\Re \gamma_1,\Re \gamma_2\}}$, $|uw| < q^{\min\{\Re \gamma_1,\Re \gamma_2\}}$ and $|uw^2| < q^{\Re(\gamma_1+\gamma_2)}$. Hence, moving the $u$-contour to $|u|=q^{-1+\varepsilon}$ we obtain
\begin{align*}
&S_{\alpha_1,\alpha_2}^{\textrm{e}}(h;2g;V=\square)=\ - \frac{q}{(q-1)\sqrt{|h_1|}} \frac{1}{(2 \pi i)^2} \oint_{|u|=q^{-1+\varepsilon}}  \oint_{|w|=r_2}\nonumber\\
&\qquad\qquad \frac{\mathcal{C}_{\alpha_1,\alpha_2}(u,w)\mathcal{D}_{\alpha_1,\alpha_2}(h;u,w) }{(1-u)(1-q^{-\alpha_1}w)(1-q^{-\alpha_2}w) (1-uw^2)(1-q^{-2\alpha_1}uw^2)(1-q^{-2\alpha_2}uw^2)}\nonumber\\
&\qquad\qquad\qquad\qquad \times\frac{ dwdu  }{u^{\left[\frac{d(h_1)}{2}\right]}w^{2g-2\left[\frac{d(h_1)+1}{2}\right]+d(h_1)+1}}+O_\varepsilon\big(q^{-g-2g\min\{\Re \alpha_j\}+\varepsilon g}\big).
\end{align*}

We shift the $w$-contour to $|w|=q^{1/2-\varepsilon}$ (for $S_{-\alpha_2,-\alpha_1}^{\textrm{e}}(h;2g-1;V=\square)$ we move the contour to $|w|=q^{1/2+\min\{-\Re \alpha_1,-\Re \alpha_2\}-\varepsilon}$ instead).  In doing so, we encounter two simple poles at $w=q^{\alpha_1}$ and $w=q^{\alpha_2}$. Moreover, the new integral is bounded by $O_\varepsilon(q^{-g+\varepsilon g})$. Hence

\begin{align*}
&S_{\alpha_1,\alpha_2}^{\textrm{e}}(h;2g;V=\square)\\
&\quad=- \frac{q^{-2g\alpha_1+2\alpha_1\left[\frac{d(h_1)+1}{2}\right]+1}\zeta_q(1-\alpha_1+\alpha_2)}{(q-1)|h_1|^{1/2+\alpha_1}} \\
&\qquad\qquad\qquad\frac{1}{2 \pi i} \oint_{|u|=q^{-1+\varepsilon}} \frac{\mathcal{C}_{\alpha_1,\alpha_2}(u,q^{\alpha_1})\mathcal{D}_{\alpha_1,\alpha_2}(h;u,q^{\alpha_1}) du}{u^{\left[\frac{d(h_1)}{2}\right]}(1-u)^2(1-q^{2\alpha_1}u)(1-q^{2(\alpha_1-\alpha_2)}u) }\\
&\quad\qquad- \frac{q^{-2g\alpha_2+2\alpha_2\left[\frac{d(h_1)+1}{2}\right]+1}\zeta_q(1+\alpha_1-\alpha_2)}{(q-1)|h_1|^{1/2+\alpha_2}} \\
&\qquad\qquad\qquad\frac{1}{2 \pi i} \oint_{|u|=q^{-1+\varepsilon}} \frac{\mathcal{C}_{\alpha_1,\alpha_2}(u,q^{\alpha_2})\mathcal{D}_{\alpha_1,\alpha_2}(h;u,q^{\alpha_2}) du}{u^{\left[\frac{d(h_1)}{2}\right]}(1-u)^2(1-q^{2\alpha_2}u)(1-q^{2(\alpha_2-\alpha_1)}u) }+O_\varepsilon(q^{-g+\varepsilon g})\\
&\quad=S_{\alpha_1,\alpha_2}^{\textrm{e};1}(h;2g)+S_{\alpha_1,\alpha_2}^{\textrm{e};2}(h;2g)+O_\varepsilon(q^{-g+\varepsilon g}),
\end{align*}
say. Similarly we have
\begin{align*}
S_{-\alpha_2,-\alpha_1}^{\textrm{e}}(h;2g-1;V=\square)&=S_{-\alpha_2,-\alpha_1}^{\textrm{e};1}(h;2g-1)+S_{-\alpha_2,-\alpha_1}^{\textrm{e};2}(h;2g-1)\\
&\qquad\qquad+O_\varepsilon\big(q^{-g+2g\max\{\Re \alpha_1,\Re \alpha_2\}+\varepsilon g}\big),
\end{align*}
where
\begin{align*}
S_{-\alpha_2,-\alpha_1}^{\textrm{e};1}(h;2g-1)&=- \frac{q^{2(g-1)\alpha_1-2\alpha_1\left[\frac{d(h_1)}{2}\right]+1}\zeta_q(1+\alpha_1-\alpha_2)}{(q-1)|h_1|^{1/2-\alpha_1}} \\
&\qquad\qquad\frac{1}{2 \pi i} \oint_{|u|=q^{-1+\varepsilon}} \frac{\mathcal{C}_{-\alpha_2,-\alpha_1}(u,q^{-\alpha_1})\mathcal{D}_{-\alpha_2,-\alpha_1}(h;u,q^{-\alpha_1}) du}{u^{\left[\frac{d(h_1)-1}{2}\right]}(1-u)^2(1-q^{-2\alpha_1}u)(1-q^{2(\alpha_2-\alpha_1)}u) }
\end{align*}
and
\begin{align}\label{1expression}
S_{-\alpha_2,-\alpha_1}^{\textrm{e};2}(h;2g-1)&=- \frac{q^{2(g-1)\alpha_2-2\alpha_2\left[\frac{d(h_1)}{2}\right]+1}\zeta_q(1-\alpha_1+\alpha_2)}{(q-1)|h_1|^{1/2-\alpha_2}} \\
&\qquad\qquad\frac{1}{2 \pi i} \oint_{|u|=q^{-1+\varepsilon}} \frac{\mathcal{C}_{-\alpha_2,-\alpha_1}(u,q^{-\alpha_2})\mathcal{D}_{-\alpha_2,-\alpha_1}(h;u,q^{-\alpha_2}) du}{u^{\left[\frac{d(h_1)-1}{2}\right]}(1-u)^2(1-q^{-2\alpha_2}u)(1-q^{2(\alpha_1-\alpha_2)}u) }.\nonumber
\end{align}

Combining with \eqref{boundfornonsquare}, \eqref{MCformula} and \eqref{switchingalpha} we see that the total error is
\begin{align*}
&\ll_\varepsilon q^{-2g(\mathfrak{a}_1\Re\alpha_1+\mathfrak{a}_2\Re\alpha_2)+\varepsilon g}\Big(|h|^{1/2}q^{-g-2g\min\{|\Re\alpha_1|,|\Re\alpha_2|\}}+q^{-g}\\
&\qquad\qquad\qquad\qquad\qquad\qquad\qquad+q^{-2g(|\Re\alpha_1|+|\Re\alpha_2|)}|h|^{1/2}q^{-g+2g\max\{|\Re\alpha_1|,|\Re\alpha_2|\}}\Big)\\
&\ll_\varepsilon |h|^{1/2}q^{-g+2g\max\{|\Re\alpha_1|,|\Re\alpha_2|\}+\varepsilon g}+q^{-g+4g\max\{|\Re\alpha_1|,|\Re\alpha_2|\}+\varepsilon g},
\end{align*}
and we obtain the expression for $\widetilde{E}_2$ in \eqref{formulaE2}.

For the main term, we have
\begin{align*}
\mathcal{A}_{\gamma_1,\gamma_2}(u)&=\prod_P\bigg(1-\frac{u^{2d(P)}}{|P|^{1+\gamma_1+\gamma_2}}\bigg)\bigg(1+\frac{1}{|P|}\bigg)^{-1}\\
&\qquad\qquad\times\bigg(1+\frac{u^{2d(P)}}{|P|^{1+\gamma_1+\gamma_2}}+\frac{1}{|P|}\bigg(1-\frac{u^{2d(P)}}{|P|^{1+2\gamma_1}}\bigg)\bigg(1-\frac{u^{2d(P)}}{|P|^{1+2\gamma_2}}\bigg)\bigg)
\end{align*}
and
\begin{align*}
\mathcal{B}_{\gamma_1,\gamma_2}(h;u)&=\prod_{P|h}\bigg(1+\frac{u^{2d(P)}}{|P|^{1+\gamma_1+\gamma_2}}+\frac{1}{|P|}\bigg(1-\frac{u^{2d(P)}}{|P|^{1+2\gamma_1}}\bigg)\bigg(1-\frac{u^{2d(P)}}{|P|^{1+2\gamma_2}}\bigg)\bigg)^{-1}\\
&\qquad\qquad\times\prod_{P|h_1}\bigg(\frac{1}{|P|^{\gamma_1}}+\frac{1}{|P|^{\gamma_2}}\bigg)\prod_{\substack{P\nmid h_1\\P|h_2}}\bigg(1+\frac{u^{2d(P)}}{|P|^{1+\gamma_1+\gamma_2}}\bigg).
\end{align*}
As
\begin{equation*}
\mathcal{A}_{\gamma_1,\gamma_2}(q^{(\gamma_1+\gamma)/2})=\mathcal{A}_{-\gamma_2,-\gamma_1}(q^{-(\gamma_1+\gamma_2)/2}),
\end{equation*}
\begin{equation}\label{relation1}
\mathcal{B}_{\gamma_1,\gamma_2}(h;q^{(\gamma_1+\gamma)/2})=|h_1|^{-(\gamma_1+\gamma_2)}\mathcal{B}_{-\gamma_2,-\gamma_1}(h;q^{-(\gamma_1+\gamma_2)/2})
\end{equation}
\[
\zeta_q(1-\gamma_1-\gamma_2)=-q^{-(\gamma_1+\gamma_2)}\zeta_q(1+\gamma_1+\gamma_2),
\]
it follows that
\begin{equation}\label{relation2}
M_{\alpha_1,\alpha_2}^{1,2}(h;2g)+q^{-2g(\alpha_1+\alpha_2)}M_{-\alpha_2,-\alpha_1}^{1,2}(h;2g-1)=0.
\end{equation}
So to prove Theorem \ref{theorem2} for $k=2$ it suffices to show that
\begin{align}\label{toverify}
\frac{q^{-2g\alpha_1}}{\sqrt{|h_1|}}\widetilde{\mathcal{S}}_{-\alpha_1,\alpha_2}(h)&=M_{\alpha_1,\alpha_2}^{1,1}(h;2g)+q^{-2g(\alpha_1+\alpha_2)}M_{-\alpha_2,-\alpha_1}^{1,1}(h;2g-1)\nonumber\\
&\qquad\qquad+S_{\alpha_1,\alpha_2}^{\textrm{e};1}(h;2g)+q^{-2g(\alpha_1+\alpha_2)}S_{-\alpha_2,-\alpha_1}^{\textrm{e};2}(h;2g-1)
\end{align}
and
\begin{align*}
\frac{q^{-2g\alpha_2}}{\sqrt{|h_1|}}\widetilde{\mathcal{S}}_{\alpha_1,-\alpha_2}(h)&=M_{\alpha_1,\alpha_2}^{2,2}(h;2g)+q^{-2g(\alpha_1+\alpha_2)}M_{-\alpha_2,-\alpha_1}^{2,2}(h;2g-1)\\
&\qquad\qquad+S_{\alpha_1,\alpha_2}^{\textrm{e};2}(h;2g)+q^{-2g(\alpha_1+\alpha_2)}S_{-\alpha_2,-\alpha_1}^{\textrm{e};1}(h;2g-1).
\end{align*}
These two identities are similar so we only need to verify \eqref{toverify}.

We next focus on $S_{\alpha_1,\alpha_2}^{\textrm{e};1}(h;2g)$. We have
\begin{equation}\label{2expression}
\mathcal{C}_{\gamma_1,\gamma_2}(u,q^{\gamma_1})=\bigg(1-\frac{u^{d(P)}}{|P|}\bigg)\widetilde{\mathcal{C}}_{\gamma_1,\gamma_2}(u),
\end{equation}
where
\begin{align*}
\widetilde{\mathcal{C}}_{\gamma_1,\gamma_2}(u)&=\prod_{P\in\mathcal{P}}\bigg(1-\frac{1}{|P|}\bigg)\bigg(1-\frac{1}{|P|^{1-\gamma_1+\gamma_2}}\bigg)\\
&\qquad\qquad\times\bigg(1+\frac{1}{|P|}+\frac{1}{|P|^{1-\gamma_1+\gamma_2}}+\frac{1}{|P|^{3-2\gamma_1+2\gamma_2}}-\frac{1}{|P|^2u^{d(P)}}-\frac{u^{d(P)}}{|P|^{2-2\gamma_1+2\gamma_2}}\bigg)\\
&=\widetilde{\mathcal{C}}_{-\gamma_2,-\gamma_1}\Big(\frac{q^{-2(\gamma_1-\gamma_2)}}{u}\Big).
\end{align*}
Similarly, note that
\begin{align*}
\mathcal{D}_{\gamma_1,\gamma_2}(h;u)&:=\mathcal{D}_{\gamma_1,\gamma_2}(h;u,q^{\gamma_1})\\
&=\prod_{P|h}\bigg(1+\frac{1}{|P|}+\frac{1}{|P|^{1-\gamma_1+\gamma_2}}+\frac{1}{|P|^{3-2\gamma_1+2\gamma_2}}-\frac{1}{|P|^2u^{d(P)}}-\frac{u^{d(P)}}{|P|^{2-2\gamma_1+2\gamma_2}}\bigg)^{-1}\\
&\qquad\qquad\prod_{P|h_1}\bigg(1+\frac{u^{d(P)}}{|P|^{\gamma_2-\gamma_1}}\bigg)\prod_{\substack{P\nmid h_1\\P|h_2}}\bigg(1+\frac{1}{|P|^{1-\gamma_1+\gamma_2}}\bigg)\\
&=u^{d(h_1)}|h_1|^{\gamma_1-\gamma_2}\mathcal{D}_{-\gamma_2,-\gamma_1}\Big(h;\frac{q^{-2(\gamma_1-\gamma_2)}}{u}\Big).
\end{align*}
Hence
\begin{align*}
S_{\alpha_1,\alpha_2}^{\textrm{e};1}(h;2g)&=-\frac{q^{-2g\alpha_1+2\alpha_1\left[\frac{d(h_1)+1}{2}\right]+1}\zeta_q(1-\alpha_1+\alpha_2)}{(q-1)|h_1|^{1/2+\alpha_1}} \\
&\qquad\qquad\qquad\frac{1}{2 \pi i} \oint_{|u|=q^{-1+\varepsilon}} \frac{\widetilde{\mathcal{C}}_{\alpha_1,\alpha_2}(u)\mathcal{D}_{\alpha_1,\alpha_2}(h;u) du}{u^{\left[\frac{d(h_1)}{2}\right]}(1-u)(1-q^{2\alpha_1}u)(1-q^{2(\alpha_1-\alpha_2)}u) }\\
&=\frac{q^{-2g\alpha_1-2\alpha_2-2\alpha_2\left[\frac{d(h_1)}{2}\right]+1}\zeta_q(1-\alpha_1+\alpha_2)}{(q-1)|h_1|^{1/2-\alpha_2}} \\
&\qquad\qquad\qquad\frac{1}{2 \pi i} \oint_{|u|=r_1'} \frac{\widetilde{\mathcal{C}}_{-\alpha_2,-\alpha_1}(u)\mathcal{D}_{-\alpha_2,-\alpha_1}(h;u) du}{u^{\left[\frac{d(h_1)-1}{2}\right]}(1-u)(1-q^{-2\alpha_2}u)(1-q^{2(\alpha_1-\alpha_2)}u) },
\end{align*}
where $r_1'=q^{1-2\Re(\alpha_1-\alpha_2)-\varepsilon}$, by changing the variables $u\rightarrow q^{-2(\alpha_1-\alpha_2)}/u$.

Furthermore, combining \eqref{1expression} and \eqref{2expression} we obtain
\begin{align*}
&q^{-2g(\alpha_1+\alpha_2)}S_{-\alpha_2,-\alpha_1}^{\textrm{e};2}(h;2g-1)=-\frac{q^{-2g\alpha_1-2\alpha_2-2\alpha_2\left[\frac{d(h_1)}{2}\right]+1}\zeta_q(1-\alpha_1+\alpha_2)}{(q-1)|h_1|^{1/2-\alpha_2}} \\
&\qquad\qquad\times\frac{1}{2 \pi i} \oint_{|u|=q^{-1+\varepsilon}} \frac{\widetilde{\mathcal{C}}_{-\alpha_2,-\alpha_1}(u)\mathcal{D}_{-\alpha_2,-\alpha_1}(h;u) du}{u^{\left[\frac{d(h_1)-1}{2}\right]}(1-u)(1-q^{-2\alpha_2}u)(1-q^{2(\alpha_1-\alpha_2)}u) }.
\end{align*}
So
\begin{align*}
&S_{\alpha_1,\alpha_2}^{\textrm{e};1}(h;2g)+q^{-2g(\alpha_1+\alpha_2)}S_{-\alpha_2,-\alpha_1}^{\textrm{e};2}(h;2g-1)=\frac{q^{-2g\alpha_1-2\alpha_2-2\alpha_2\left[\frac{d(h_1)}{2}\right]+1}\zeta_q(1-\alpha_1+\alpha_2)}{(q-1)|h_1|^{1/2-\alpha_2}} \\
&\qquad\qquad\times\bigg(\frac{1}{2 \pi i} \oint_{|u|=r_1'}-\frac{1}{2 \pi i} \oint_{|u|=q^{-1+\varepsilon}}\bigg) \frac{\widetilde{\mathcal{C}}_{-\alpha_2,-\alpha_1}(u)\mathcal{D}_{-\alpha_2,-\alpha_1}(h;u) du}{u^{\left[\frac{d(h_1)-1}{2}\right]}(1-u)(1-q^{-2\alpha_2}u)(1-q^{2(\alpha_1-\alpha_2)}u) }\\
&\qquad=\text{Res}(u=1)+\text{Res}(u=q^{2\alpha_2})+\text{Res}\big(u=q^{2(\alpha_2-\alpha_1)}\big).
\end{align*}


It is straightforward to verify that
\[
\widetilde{\mathcal{C}}_{-\alpha_2,-\alpha_1}(q^{2\alpha_2})=\frac{\mathcal{A}_{-\alpha_1,\alpha_2}(1)}{\zeta_q(2)}\qquad\text{and}\qquad \mathcal{D}_{-\alpha_2,-\alpha_1}(h;q^{2\alpha_2})=|h_1|^{\alpha_2}\mathcal{B}_{-\alpha_1,\alpha_2}(h;1).
\]
So like in \eqref{relation1} and \eqref{relation2} it follows that
\[
\frac{q^{-2g\alpha_1}}{\sqrt{|h_1|}}\widetilde{\mathcal{S}}_{-\alpha_1,\alpha_2}(h)=\text{Res}(u=q^{2\alpha_2}).
\]
Similarly we have
\[
M_{\alpha_1,\alpha_2}^{1,1}(h;2g)+\text{Res}\big(u=q^{2(\alpha_2-\alpha_1)}\big)=0
\]
and
\[
q^{-2g(\alpha_1+\alpha_2)}M_{-\alpha_2,-\alpha_1}^{1,1}(h;2g-1)+\text{Res}(u=1)=0.
\]
Thus \eqref{toverify} holds, and hence Theorem \ref{theorem2} holds for $k=2$.

 \subsubsection{The case $k=3$}
 
An exercise on the Euler product  \eqref{theEulerproduct} shows that
 \begin{align*}
&\mathcal{N}_{\gamma_1,\gamma_2,\gamma_3}(h;u,w) = \frac{|h| u^{d(h_2)}}{\sqrt{| h_1|}}\mathcal{C}_{\gamma_1,\gamma_2,\gamma_3}(u,w)\mathcal{D}_{\gamma_1,\gamma_2,\gamma_3}(h;u,w)\\
&\qquad\times\mathcal{Z} \Big(  \frac{1}{q^2u} \Big)\mathcal{Z}(u) \mathcal{Z}\Big(\frac {w}{q^{1+\gamma_1}}\Big)\mathcal{Z}\Big(\frac {w}{q^{1+\gamma_2}}\Big)\mathcal{Z}\Big(\frac {w}{q^{1+\gamma_3}}\Big)   \mathcal{Z}\Big(\frac{uw}{q^{1+\gamma_1}}\Big)^{-1}\mathcal{Z}\Big(\frac{uw}{q^{1+\gamma_2}}\Big)^{-1}\mathcal{Z}\Big(\frac{uw}{q^{1+\gamma_3}}\Big)^{-1}\\
&\qquad\times \mathcal{Z}\Big(\frac{uw^2}{q^{1+2\gamma_1}}\Big)  \mathcal{Z}\Big(\frac{uw^2}{q^{1+2\gamma_2}}\Big)  \mathcal{Z}\Big(\frac{uw^2}{q^{1+2\gamma_3}}\Big)  \mathcal{Z}\Big(\frac{uw^2}{q^{1+\gamma_1+\gamma_2}}\Big)  \mathcal{Z}\Big(\frac{uw^2}{q^{1+\gamma_2+\gamma_3}}\Big) \mathcal{Z}\Big(\frac{uw^2}{q^{1+\gamma_3+\gamma_1}}\Big)   ,
\end{align*}
 where
\begin{align*}
&\mathcal{C}_{\gamma_1,\gamma_2,\gamma_3}(u,w) =\prod_P\bigg (1-\frac{w^{d(P)}}{|P|^{1+\gamma_1}}\bigg)\bigg (1-\frac{w^{d(P)}}{|P|^{1+\gamma_2}}\bigg)\bigg (1-\frac{w^{d(P)}}{|P|^{1+\gamma_3}}\bigg)\\
& \times  \bigg(1-\frac{(uw)^{d(P)}}{|P|^{1+\gamma_1}}\bigg)^{-1}\bigg(1-\frac{(uw)^{d(P)}}{|P|^{1+\gamma_2}}\bigg)^{-1} \bigg(1-\frac{(uw)^{d(P)}}{|P|^{1+\gamma_3}}\bigg)^{-1}\\
&\times\bigg (1-\frac{(uw^2)^{d(P)}}{|P|^{1+\gamma_1+\gamma_2}}\bigg)\bigg (1-\frac{(uw^2)^{d(P)}}{|P|^{1+\gamma_2+\gamma_3}}\bigg)\bigg (1-\frac{(uw^2)^{d(P)}}{|P|^{1+\gamma_3+\gamma_1}}\bigg)\\
&\times\bigg(1+\frac{w^{d(P)}(1-u^{d(P)})}{|P|^{1+\gamma_1}}+\frac{w^{d(P)}(1-u^{d(P)})}{|P|^{1+\gamma_2}}+\frac{w^{d(P)}(1-u^{d(P)})}{|P|^{1+\gamma_3}}\\
&\qquad+\frac{(uw^2)^{d(P)}}{|P|^{1+\gamma_1+\gamma_2}}+\frac{(uw^2)^{d(P)}}{|P|^{1+\gamma_2+\gamma_3}}+\frac{(uw^2)^{d(P)}}{|P|^{1+\gamma_3+\gamma_1}}-\frac{(uw^2)^{d(P)}}{|P|^{2+2\gamma_1}}-\frac{(uw^2)^{d(P)}}{|P|^{2+2\gamma_2}}-\frac{(uw^2)^{d(P)}}{|P|^{2+2\gamma_3}}\\
&\qquad-\frac{(uw^2)^{d(P)}}{|P|^{2+\gamma_1+\gamma_2}}-\frac{(uw^2)^{d(P)}}{|P|^{2+\gamma_2+\gamma_3}}-\frac{(uw^2)^{d(P)}}{|P|^{2+\gamma_3+\gamma_1}}+\frac{(uw^3)^{d(P)}\big(1-u^{d(P)}\big)}{|P|^{2+\gamma_1+\gamma_2+\gamma_3}}-\frac{1}{|P|^2u^{d(P)}}\\
&\qquad+\frac{w^{2d(P)}}{|P|^{3+2\gamma_1}}+\frac{w^{2d(P)}}{|P|^{3+2\gamma_2}}+\frac{w^{2d(P)}}{|P|^{3+2\gamma_3}}+\frac{(uw^2)^{2d(P)}}{|P|^{3+2\gamma_1+2\gamma_2}}+\frac{(uw^2)^{2d(P)}}{|P|^{3+2\gamma_2+2\gamma_3}}+\frac{(uw^2)^{2d(P)}}{|P|^{3+2\gamma_3+2\gamma_1}}\\
&\qquad-\frac{(uw^4)^{d(P)}}{|P|^{4+2\gamma_1+2\gamma_2}}-\frac{(uw^4)^{d(P)}}{|P|^{4+2\gamma_2+2\gamma_3}}-\frac{(uw^4)^{d(P)}}{|P|^{4+2\gamma_3+2\gamma_1}}-\frac{(uw^2)^{3d(P)}}{|P|^{4+2\gamma_1+2\gamma_2+2\gamma_3}}+\frac{(uw^3)^{2d(P)}}{|P|^{5+2\gamma_1+2\gamma_2+2\gamma_3}}\bigg),
\end{align*}
and 
\begin{align*}
&\mathcal{D}_{\gamma_1,\gamma_2,\gamma_3}(h;u,w)=\prod_{P|h}\bigg(1+\frac{w^{d(P)}(1-u^{d(P)})}{|P|^{1+\gamma_1}}+\frac{w^{d(P)}(1-u^{d(P)})}{|P|^{1+\gamma_2}}+\frac{w^{d(P)}(1-u^{d(P)})}{|P|^{1+\gamma_3}}\\
&\qquad+\frac{(uw^2)^{d(P)}}{|P|^{1+\gamma_1+\gamma_2}}+\frac{(uw^2)^{d(P)}}{|P|^{1+\gamma_2+\gamma_3}}+\frac{(uw^2)^{d(P)}}{|P|^{1+\gamma_3+\gamma_1}}-\frac{(uw^2)^{d(P)}}{|P|^{2+2\gamma_1}}-\frac{(uw^2)^{d(P)}}{|P|^{2+2\gamma_2}}-\frac{(uw^2)^{d(P)}}{|P|^{2+2\gamma_3}}\\
&\qquad-\frac{(uw^2)^{d(P)}}{|P|^{2+\gamma_1+\gamma_2}}-\frac{(uw^2)^{d(P)}}{|P|^{2+\gamma_2+\gamma_3}}-\frac{(uw^2)^{d(P)}}{|P|^{2+\gamma_3+\gamma_1}}+\frac{(uw^3)^{d(P)}\big(1-u^{d(P)}\big)}{|P|^{2+\gamma_1+\gamma_2+\gamma_3}}-\frac{1}{|P|^2u^{d(P)}}\\
&\qquad+\frac{w^{2d(P)}}{|P|^{3+2\gamma_1}}+\frac{w^{2d(P)}}{|P|^{3+2\gamma_2}}+\frac{w^{2d(P)}}{|P|^{3+2\gamma_3}}+\frac{(uw^2)^{2d(P)}}{|P|^{3+2\gamma_1+2\gamma_2}}+\frac{(uw^2)^{2d(P)}}{|P|^{3+2\gamma_2+2\gamma_3}}+\frac{(uw^2)^{2d(P)}}{|P|^{3+2\gamma_3+2\gamma_1}}\\
&\qquad-\frac{(uw^4)^{d(P)}}{|P|^{4+2\gamma_1+2\gamma_2}}-\frac{(uw^4)^{d(P)}}{|P|^{4+2\gamma_2+2\gamma_3}}-\frac{(uw^4)^{d(P)}}{|P|^{4+2\gamma_3+2\gamma_1}}-\frac{(uw^2)^{3d(P)}}{|P|^{4+2\gamma_1+2\gamma_2+2\gamma_3}}+\frac{(uw^3)^{2d(P)}}{|P|^{5+2\gamma_1+2\gamma_2+2\gamma_3}}\bigg)^{-1}\\
&\ \times \prod_{P|h_1}\bigg(1-u^{d(P)}+\frac{(uw)^{d(P)}}{|P|^{\gamma_1}}+\frac{(uw)^{d(P)}}{|P|^{\gamma_2}}+\frac{(uw)^{d(P)}}{|P|^{\gamma_3}}-\frac{(uw)^{d(P)}}{|P|^{1+\gamma_1}}-\frac{(uw)^{d(P)}}{|P|^{1+\gamma_2}}-\frac{(uw)^{d(P)}}{|P|^{1+\gamma_3}}\\
&\qquad+\frac{(uw^2)^{d(P)}(1-u^{d(P)})}{|P|^{1+\gamma_1+\gamma_2}}+\frac{(uw^2)^{d(P)}(1-u^{d(P)})}{|P|^{1+\gamma_2+\gamma_3}}+\frac{(uw^2)^{d(P)}(1-u^{d(P)})}{|P|^{1+\gamma_3+\gamma_1}}\\
&\qquad+\frac{(u^2w^3)^{d(P)}}{|P|^{1+\gamma_1+\gamma_2+\gamma_3}}-\frac{(u^2w^3)^{d(P)}}{|P|^{2+\gamma_1+\gamma_2+\gamma_3}}\bigg)\\
&\ \times\prod_{\substack{P\nmid h_1\\P|h_2}} \bigg(1-\frac{1}{|P|}+\frac{w^{d(P)}(1-u^{d(P)})}{|P|^{1+\gamma_1}}+\frac{w^{d(P)}(1-u^{d(P)})}{|P|^{1+\gamma_2}}+\frac{w^{d(P)}(1-u^{d(P)})}{|P|^{1+\gamma_3}}\\
&\qquad+\frac{(uw^2)^{d(P)}}{|P|^{1+\gamma_1+\gamma_2}}+\frac{(uw^2)^{d(P)}}{|P|^{1+\gamma_2+\gamma_3}}+\frac{(uw^2)^{d(P)}}{|P|^{1+\gamma_3+\gamma_1}}-\frac{(uw^2)^{d(P)}}{|P|^{2+\gamma_1+\gamma_2}}-\frac{(uw^2)^{d(P)}}{|P|^{2+\gamma_2+\gamma_3}}-\frac{(uw^2)^{d(P)}}{|P|^{2+\gamma_3+\gamma_1}}\\
&\qquad+\frac{(uw^3)^{d(P)}(1-u^{d(P)})}{|P|^{2+\gamma_1+\gamma_2+\gamma_3}}\bigg).
\end{align*}
Note that $\mathcal{C}_{\gamma_1,\gamma_2,\gamma_3}(u,w)$ is absolutely convergent for $|u| > 1/q$, $|w| < q^{1/2+\min\{\Re \gamma_j\}}$, $|uw| <q^{1/2+\min\{\Re \gamma_j\}}$ and $|uw^2| <q^{1/2+\min_{i\ne j}\{\Re(\gamma_i+\gamma_j)\}}$
. 
We hence obtain
\begin{align*}
&S_{\gamma_1,\gamma_2,\gamma_3}^{\textrm{e}}(h;N;V=\square) =- \frac{q}{(q-1)|h_1|^{1/2}} \frac{1}{(2 \pi i)^2} \oint_{|u|=q^{-1+\varepsilon}}  \oint_{|w|=r_2} \\
&\qquad\frac{(1-q^{-\gamma_1}uw)(1-q^{-\gamma_2}uw)(1-q^{-\gamma_3}uw)\mathcal{C}_{\gamma_1,\gamma_2,\gamma_3}(u,w)\mathcal{D}_{\gamma_1,\gamma_2,\gamma_3}(h;u,w)}{(1-u)(1-q^{-\gamma_1}w)(1-q^{-\gamma_2}w)(1-q^{-\gamma_3}w)(1-uw^2)}\\
&\qquad\qquad\times\frac{1}{(1-q^{-2\gamma_1}uw^2)(1-q^{-2\gamma_2}uw^2)(1-q^{-2\gamma_3}uw^2)(1-q^{-(\gamma_1+\gamma_2)}uw^2)}\\
&\qquad\qquad\times\frac{1}{(1-q^{-(\gamma_2+\gamma_3)}uw^2)(1-q^{-(\gamma_3+\gamma_1)}uw^2)} \frac{ dwdu }{u^{\left[\frac{N+d(h_1)}{2}\right]-g}w^{2\left[\frac{N-d(h_1)}{2}\right]+d(h_1)+1}}\\
&\qquad\qquad\qquad +O_\varepsilon\big(q^{-g/2-3g\min\{\Re \gamma_j\}+\varepsilon g}\big).\nonumber
\end{align*}

We move the $w$-contour to $|w|=q^{1/2+\min\{0,\Re \gamma_j\}-\varepsilon}$, crossing three simple poles at $w=q^{\gamma_j}$, $1\leq j\leq 3$. 
The new integral is bounded trivially by $O_\varepsilon(q^{-g-3g\min\{0,\Re \gamma_j\}+\varepsilon g})$. So we get
\begin{align}\label{6000}
&S_{\gamma_1,\gamma_2,\gamma_3}^{\textrm{e}}(h;N;V=\square)\nonumber \\
&\qquad=\text{Res}(w=q^{\gamma_1})+\text{Res}(w=q^{\gamma_2})+\text{Res}(w=q^{\gamma_3})\nonumber\\
&\qquad\qquad\qquad+O_\varepsilon\big(q^{-g/2-3g\min\{\Re \gamma_j\}+\varepsilon g}\big)+O_\varepsilon\big(q^{-g-3g\min\{0,\Re \gamma_j\}+\varepsilon g}\big)\nonumber\\
&\qquad=S_{\gamma_1,\gamma_2,\gamma_3}^{\textrm{e}}(h;N)+S_{\gamma_2,\gamma_3,\gamma_1}^{\textrm{e}}(h;N)+S_{\gamma_3,\gamma_1,\gamma_2}^{\textrm{e}}(h;N)\nonumber\\
&\qquad\qquad\qquad+O_\varepsilon\big(q^{-g/2-3g\min\{\Re \gamma_j\}+\varepsilon g}\big)+O_\varepsilon\big(q^{-g-3g\min\{0,\Re \gamma_j\}+\varepsilon g}\big),
\end{align}
say, where
\begin{align*}
&S_{\gamma_1,\gamma_2,\gamma_3}^{\textrm{e}}(h;N) =- \frac{q^{-2\gamma_1\left[\frac{N-d(h_1)}{2}\right]+1}\zeta_q(1-\gamma_1+\gamma_2)\zeta_q(1-\gamma_1+\gamma_3)}{(q-1)|h_1|^{1/2+\gamma_1}}\frac{1}{2 \pi i }\oint_{|u|=q^{-1+\varepsilon}} \\
&\qquad\qquad \frac{\mathcal{C}_{\gamma_1,\gamma_2,\gamma_3}(u,q^{\gamma_1})\mathcal{D}_{\gamma_1,\gamma_2,\gamma_3}(h;u,q^{\gamma_1})du}{u^{\left[\frac{N+d(h_1)}{2}\right]-g}(1-u)(1-q^{2\gamma_1}u)(1-q^{2(\gamma_1-\gamma_2)}u)(1-q^{2(\gamma_1-\gamma_3)}u)(1-q^{2\gamma_1-\gamma_2-\gamma_3}u)}.
\end{align*}

Notice that $\mathcal{C}_{\gamma_1,\gamma_2,\gamma_3}(u,q^{\gamma_1})$ converges absolutely for $q^{-1+\varepsilon}<|u|<q^{1/2+2\min\{\Re \gamma_j\}-2\Re \gamma_1-\varepsilon}$. We move the $u$-contour to $|u|=q^{1/2+2\min\{\Re \gamma_j\}-2\Re \gamma_1-2\varepsilon}$ 
encountering five simple poles and the new integral is bounded trivially by 
\begin{align*}
&\ll_\varepsilon|h_1|^{-3/4-\min\{\Re \gamma_j\}}q^{-g/4-g\min\{\Re \gamma_j\}+\varepsilon g}\Big(\frac{q^{2g}}{|h_1|}\Big)^{-\Re\gamma_1}\\
&\ll_\varepsilon|h_1|^{-3/4-\min\{\Re \gamma_j\}}q^{-g/4-g\min\{\Re \gamma_j\}+\varepsilon g}\Big(\frac{q^{2g}}{|h_1|}\Big)^{-\min\{\Re \gamma_j\}}=|h_1|^{-3/4}q^{-g/4-3g\min\{\Re \gamma_j\}+\varepsilon g},
\end{align*} 
provided that $|h_1|<q^{2g}$. Hence
\begin{align}\label{6001}
&S_{\gamma_1,\gamma_2,\gamma_3}^{\textrm{e}}(h;N)\nonumber\\
&\qquad=\text{Res}(u=1)+\text{Res}(u=q^{-2\gamma_1})+\text{Res}(u=q^{2(\gamma_2-\gamma_1)})+\text{Res}(u=q^{2(\gamma_3-\gamma_1)})\nonumber\\
&\qquad\qquad+\text{Res}(u=q^{\gamma_2+\gamma_3-2\gamma_1})+O_\varepsilon\big(|h_1|^{-3/4}q^{-g/4-3g\min\{\Re \gamma_j\}+\varepsilon g}\big)\nonumber\\
&\qquad=\sum_{j=1}^{5}S_{\gamma_1,\gamma_2,\gamma_3}^{\textrm{e};j}(h;N)+O_\varepsilon\big(|h_1|^{-3/4}q^{-g/4-3g\min\{\Re \gamma_j\}+\varepsilon g}\big),
\end{align}
say, where
\begin{align*}
&S_{\gamma_1,\gamma_2,\gamma_3}^{\textrm{e};1}(h;N)=- \frac{q\mathcal{C}_{\gamma_1,\gamma_2,\gamma_3}(1,q^{\gamma_1})\mathcal{D}_{\gamma_1,\gamma_2,\gamma_3}(h;1,q^{\gamma_1})}{(q-1)|h_1|^{1/2+\gamma_1}}q^{-2\gamma_1\left[\frac{N-d(h_1)}{2}\right]}\zeta_q(1-2\gamma_1)\\
&\ \times\zeta_q(1-2\gamma_1+2\gamma_2)\zeta_q(1-2\gamma_1+2\gamma_3)\zeta_q(1-2\gamma_1+\gamma_2+\gamma_3)\zeta_q(1-\gamma_1+\gamma_2)\zeta_q(1-\gamma_1+\gamma_3),\\
&S_{\gamma_1,\gamma_2,\gamma_3}^{\textrm{e};2}(h;N)=- \frac{q\mathcal{C}_{\gamma_1,\gamma_2,\gamma_3}(q^{-2\gamma_1},q^{\gamma_1})\mathcal{D}_{\gamma_1,\gamma_2,\gamma_3}(h;q^{-2\gamma_1},q^{\gamma_1})}{(q-1)|h_1|^{1/2-\gamma_1}}q^{-2g\gamma_1-2\gamma_1}\\
&\qquad \times\zeta_q(1+2\gamma_1)\zeta_q(1+2\gamma_2)\zeta_q(1+2\gamma_3)\zeta_q(1+\gamma_2+\gamma_3)\zeta_q(1-\gamma_1+\gamma_2)\zeta_q(1-\gamma_1+\gamma_3),\\
&S_{\gamma_1,\gamma_2,\gamma_3}^{\textrm{e};3}(h;N)=- \frac{q\mathcal{C}_{\gamma_1,\gamma_2,\gamma_3}(q^{2(\gamma_2-\gamma_1)},q^{\gamma_1})\mathcal{D}_{\gamma_1,\gamma_2,\gamma_3}(h;q^{2(\gamma_2-\gamma_1)},q^{\gamma_1})}{(q-1)|h_1|^{1/2-\gamma_1}}\\
&\qquad\qquad\times q^{-2g(\gamma_1-\gamma_2)-2(\gamma_1-\gamma_2)-2\gamma_2\left[\frac{N+d(h_1)}{2}\right]}\zeta_q(1-2\gamma_2)\zeta_q(1+2\gamma_1-2\gamma_2)\\
&\qquad\qquad \times\zeta_q(1-2\gamma_2+2\gamma_3)\zeta_q(1-\gamma_2+\gamma_3)\zeta_q(1-\gamma_1+\gamma_2)\zeta_q(1-\gamma_1+\gamma_3),\\
&S_{\gamma_1,\gamma_2,\gamma_3}^{\textrm{e};4}(h;N)=S_{\gamma_1,\gamma_3,\gamma_2}^{\textrm{e};3}(h;N)
\end{align*}
and
\begin{align*}
&S_{\gamma_1,\gamma_2,\gamma_3}^{\textrm{e};5}(h;N)=- \frac{q\mathcal{C}_{\gamma_1,\gamma_2,\gamma_3}(q^{\gamma_2+\gamma_3-2\gamma_1})\mathcal{D}_{\gamma_1,\gamma_2,\gamma_3}(h;q^{\gamma_2+\gamma_3-2\gamma_1},q^{\gamma_1})}{(q-1)|h_1|^{1/2-\gamma_1}}\\
&\qquad\qquad\times q^{-g(2\gamma_1-\gamma_2-\gamma_3)-(2\gamma_1-\gamma_2-\gamma_3)-(\gamma_2+\gamma_3)\left[\frac{N+d(h_1)}{2}\right]}\zeta_q(1-\gamma_2-\gamma_3)\zeta_q(1+2\gamma_1-\gamma_2-\gamma_3)\\
&\qquad\qquad \times\zeta_q(1+\gamma_2-\gamma_3)\zeta_q(1-\gamma_2+\gamma_3)\zeta_q(1-\gamma_1+\gamma_2)\zeta_q(1-\gamma_1+\gamma_3).
\end{align*}
 
It is straightforward to verify that
\begin{equation}\label{verify1}
\mathcal{C}_{\gamma_1,\gamma_2,\gamma_3}(1,q^{\gamma_1})=\frac{\mathcal{A}_{\gamma_1,\gamma_2,\gamma_3}(q^{\gamma_1})}{\zeta_q(2)}\quad\text{and}\quad \mathcal{D}_{\gamma_1,\gamma_2,\gamma_3}(h;1,q^{\gamma_1})=|h_1|^{\alpha_1}\mathcal{B}_{\gamma_1,\gamma_2,\gamma_3}(h;q^{\gamma_1}).
\end{equation}
Hence
\[
M_{\gamma_1,\gamma_2,\gamma_3}^{1,1}(h;N)+S_{\gamma_1,\gamma_2,\gamma_3}^{\textrm{e};1}(h;N)=0,
\]
where $M_{\gamma_1,\gamma_2,\gamma_3}^{i,j}(h;N)$ is defined in \eqref{Mịformula}, and also
\begin{align*}
&M_{\gamma_1,\gamma_2,\gamma_3}^{2,2}(h;N)+S_{\gamma_2,\gamma_3,\gamma_1}^{\textrm{e};1}(h;N)=0,\\
&M_{\gamma_1,\gamma_2,\gamma_3}^{3,3}(h;N)+S_{\gamma_3,\gamma_1,\gamma_2}^{\textrm{e};1}(h;N)=0.
\end{align*}
So combining with \eqref{boundfornonsquare}, \eqref{MCformula}, \eqref{6000} and \eqref{6001} we obtain
\begin{align}\label{combine1}
&S_{\gamma_1,\gamma_2,\gamma_3}(h;N)=\frac{\widetilde{\mathcal{S}}_{\gamma_1,\gamma_2,\gamma_3}(h)}{\sqrt{|h_1|}}+\sum_{1\leq i< j\leq 3}M_{\gamma_1,\gamma_2,\gamma_3}^{i,j}(h;N)+\sum_{j=2}^{5}S_{\gamma_1,\gamma_2,\gamma_3}^{\textrm{e};j}(h;N)\nonumber\\
&\qquad\qquad+\sum_{j=2}^{5}S_{\gamma_2,\gamma_3,\gamma_1}^{\textrm{e};j}(h;N)+\sum_{j=2}^{5}S_{\gamma_3,\gamma_1,\gamma_2}^{\textrm{e};j}(h;N)+O_\varepsilon\big(|h|^{1/2} q^{-g/2-3g \min\{\Re\gamma_j\}+\varepsilon g}\big)\nonumber\\
&\qquad\qquad+O_\varepsilon\big(q^{-g-3g\min\{0,\Re \gamma_j\}+\varepsilon g}\big)+O_\varepsilon\big(|h_1|^{-3/4}q^{-g/4-3g\min\{\Re \gamma_j\}+\varepsilon g}\big).
\end{align}

From \eqref{switchingalpha} we see that the total error is
\begin{align*}
&\ll_\varepsilon q^{-2g\sum\mathfrak{a}_j\Re\alpha_j+\varepsilon g}\Big(|h|^{1/2}q^{-g/2-3g\min\{|\Re\alpha_j|\}}+q^{-g}+|h_1|^{-3/4}q^{-g/4-3g\min\{|\Re\alpha_j|\}}\\
&\qquad\qquad\qquad\qquad\quad+q^{-2g\sum|\Re\alpha_j|}\big(|h|^{1/2}q^{-g/2+3g\max\{|\Re\alpha_j|\}}+|h_1|^{-3/4}q^{-g/4+3g\max\{|\Re\alpha_j|\}}\big)\Big)\\
&\ll_\varepsilon |h|^{1/2}q^{-g/2+4g\max\{|\Re\alpha_j|\}-g\min\{|\Re\alpha_j|\}+\varepsilon g}+q^{-g+6g\max\{|\Re\alpha_j|\}+\varepsilon g}\\
&\qquad\qquad\qquad\qquad\quad+|h_1|^{-3/4}q^{-g/4+4g\max\{|\Re\alpha_j|\}-g\min\{|\Re\alpha_j|\}+\varepsilon g},
\end{align*}
and we obtain the expression for $\widetilde{E}_3$ in \eqref{formulaE2}.

For the main term, like in \eqref{verify1} we have
\begin{equation*}
\mathcal{C}_{\gamma_1,\gamma_2,\gamma_3}(q^{-2\gamma_1},q^{\gamma_1})=\frac{\mathcal{A}_{-\gamma_1,\gamma_2,\gamma_3}(1)}{\zeta_q(2)}\quad\text{and}\quad \mathcal{D}_{\gamma_1,\gamma_2,\gamma_3}(h;q^{-2\gamma_1},q^{\gamma_1})=|h_1|^{-\alpha_1}\mathcal{B}_{-\gamma_1,\gamma_2,\gamma_3}(h;1).
\end{equation*}
Together with the identity
\[
-q^{-2\gamma_1}\zeta_q(1+2\gamma_1)=\zeta_q(1-2\gamma_1)
\]
it follows that
\begin{equation}\label{combine2}
S_{\gamma_1,\gamma_2,\gamma_3}^{\textrm{e};2}(h;N)=\frac{q^{-2g\gamma_1}}{\sqrt{|h_1|}}\widetilde{\mathcal{S}}_{-\gamma_1,\gamma_2,\gamma_3}(h),
\end{equation}
and hence also
\begin{equation}\label{combine3}
q^{-2g(\gamma_1+\gamma_2+\gamma_3)}S_{-\gamma_1,-\gamma_2,-\gamma_3}^{\textrm{e};2}(h;N)=\frac{q^{-2g(\gamma_2+\gamma_3)}}{\sqrt{|h_1|}}\widetilde{\mathcal{S}}_{\gamma_1,-\gamma_2,-\gamma_3}(h).
\end{equation}

Using the same arguments, by comparing the coefficients of $q^{-g\alpha_j}$ for $1\leq j\leq 3$ we get
\begin{align}\label{combine4}
&S_{\alpha_1,\alpha_2,\alpha_3}^{\textrm{e};3}(h;3g)+q^{-2g(\alpha_1+\alpha_2+\alpha_3)}S_{-\alpha_3,-\alpha_1,-\alpha_2}^{\textrm{e};4}(h;3g-1)=0,\nonumber\\
&S_{\alpha_1,\alpha_2,\alpha_3}^{\textrm{e};4}(h;3g)+q^{-2g(\alpha_1+\alpha_2+\gamma_3)}S_{-\alpha_2,-\alpha_3,-\alpha_1}^{\textrm{e};3}(h;3g-1)=0,\nonumber\\
&S_{\alpha_1,\alpha_2,\alpha_3}^{\textrm{e};5}(h;3g)+q^{-2g(\alpha_1+\alpha_2+\gamma_3)}\mathcal{M}_{-\alpha_1,-\alpha_2,-\alpha_3}^{2,3}(h;3g-1)=0,\\
&\mathcal{M}_{\alpha_1,\alpha_2,\alpha_3}^{1,2}(h;3g)+q^{-2g(\alpha_1+\alpha_2+\gamma_3)}S_{-\alpha_3,-\alpha_1,-\alpha_2}^{\textrm{e};5}(h;3g-1)=0.\nonumber
\end{align}
In view of \eqref{combine1}--\eqref{combine4} we have
\begin{align*}
&S_{\alpha_1,\alpha_2,\alpha_3}(h;3g)+q^{-2g(\alpha_1+\alpha_2+\gamma_3)}S_{-\alpha_1,-\alpha_2,-\alpha_3}(h;3g-1)\\
&\qquad\qquad=\frac{1}{\sqrt{|h_1|}}\sum_{{\bf R}\subset{\bf A}=\{\alpha_1,\alpha_2,\alpha_3\}}q^{-2g\bf R}\widetilde{\mathcal{S}}_{\bf (A\backslash\bf R)\cup\bf R^-}(h)+O_\varepsilon\big(|h|^{1/2} q^{-g/2+\varepsilon g}\big),
\end{align*}
as required.

\subsection{Evaluate $S_{{\bf C}}(h;N;V \neq \square)$}\label{sectionnonsquare}

Recall from \eqref{Sknonsquare} that 
\begin{align*}
S_{{\bf C}}(h;N;V \neq \square) &=\big(S_{{\bf C};1}^{\textrm{o}}(h;N)-qS_{{\bf C};2}^{\textrm{o}}(h;N)\big)\nonumber\\
&\qquad\qquad+\big(S_{{\bf C};1}^{\textrm{e}}(h;N;V\ne\square)-qS_{{\bf C};2}^{\textrm{e}}(h;N;V\ne\square)\big) 
\end{align*} and $S_{{\bf C};1}^{\textrm{o}}(h;N)$ is given by equation \eqref{s1odd},
\[
S_{{\bf C};1}^{\textrm{o}}(h;N)=\frac{q^{3/2}}{(q-1)|h|}\sum_{\substack{d(f)\leq N\\d(fh)\ \textrm{odd}}}\frac{ \tau_{\bf C}(f)}{|f|^{3/2}}\sum_{\substack{C|(fh)^\infty\\d(C)\leq g}}\frac{1}{|C|^2}\sum_{V\in\mathcal{M}_{d(fh)-2g-2+2d(C)}}G(V,\chi_{fh}).
\] We will focus on bounding $S_{{\bf C};1}^{\textrm{o}}(h;N)$, since bounding the other ones follow similarly.

Using the fact that for $r_1<1$,
$$\sum_{\substack{C \in \mathcal{M}_j \\ C| (f h)^{\infty}}} \frac{1}{|C|^2} = \frac{1}{2 \pi i} \oint_{|u|=r_1} q^{-2j}   \prod_{P | f h} \big(1-u^{d(P)}\big)^{-1}\, \frac{du}{u^{j+1}},$$ and writing $V=V_1V_2^2$ with $V_1$ a square-free polynomial, we have
\begin{align*}
S_{{\bf C};1}^{\textrm{o}}(h;N) &= \frac{q^{3/2}}{(q-1)|h|} \frac{1}{2 \pi i} \oint_{|u|=q^{-\varepsilon}
} \sum_{\substack{n\leq N \\ n +d(h) \text{ odd}}}\sum_{j=0}^g  \sum_{\substack{r\leq n+d(h)-2g+2j-2 \\ r \text{ odd}}}q^{-2j} \\
& \ \times \sum_{V_1 \in \mathcal{H}_r} \sum_{V_2 \in \mathcal{M}_{(n+d(h)-r)/2-g+j-1}} \sum_{f \in \mathcal{M}_n} \frac{\tau_{\bf C}(f) G(V_1V_2^2, \chi_{f h})}{|f|^{3/2} }\prod_{P | f h} \big(1-u^{d(P)}\big)^{-1} \, \frac{du}{u^{j+1}}.
\end{align*}
Now
\begin{align*}
\sum_{f \in \mathcal{M}}   \frac{\tau_{\bf C}(f) G(V_1V_2^2, \chi_{f h})}{|f|^{3/2}}  \prod_{P | f h} \big(1-u^{d(P)}\big)^{-1}w^{d(f)}=  \mathcal{H}(V_1 ; u,w) \mathcal{K}(V,h;u,w), 
\end{align*} where

$$ \mathcal{H}(V_1;u,w) =  \prod_{P\nmid V_1} \bigg( 1+\sum_{\gamma\in\bf C} \frac{\chi_{V_1}(P) w^{d(P)}}{|P|^{1+\gamma}}\big(1-u^{d(P)}\big)^{-1} \bigg) $$ 
and
\begin{align}
\mathcal{K}(V,h;u,w) &=  \prod_{P | h} \bigg( \sum_{j=0}^{\infty} \frac{\tau_{\bf C}(P^j) G(V,\chi_{P^{j+\text{ord}_P(h)}}) w^{j d(P)}}{|P|^{3j/2}}   \bigg) \big(1-u^{d(P)}\big)^{-1}\nonumber \\
&\qquad\qquad\times \prod_{\substack{P \nmid h\\P |V  }} \bigg( 1+ \sum_{j=1}^{\infty} \frac{\tau_{\bf C}(P^j) G(V, \chi_{P^j}) w^{j d(P)}}{|P|^{3j/2}} \big(1-u^{d(P)}\big)^{-1}  \bigg)  \nonumber  \\
& \qquad\qquad\times \prod_{\substack{P \nmid V_1\\P | h V_2 }}\bigg( 1+\sum_{\gamma\in\bf C} \frac{\chi_{V_1}(P) w^{d(P)}}{|P|^{1+\gamma}} \big(1-u^{d(P)}\big)^{-1}\bigg)^{-1}. \label{k_euler}
\end{align}
Note that $\mathcal{H}(V_1;u,w)$ is convergent for $|u|<1$ and $|w|<q^{1/2+\min\{\Re \gamma_j\}-\varepsilon}$. Using the Perron formula for the sum over $f$ we obtain
\begin{align}
S_{{\bf C};1}^{\textrm{o}}(h;N)&= \frac{q^{3/2}}{(q-1)|h|} \frac{1}{(2 \pi i)^2} \oint_{|u|=q^{-\varepsilon}} \oint_{|w| = r_2} \sum_{\substack{n\leq N \\ n +d(h) \text{ odd}}}  \sum_{j=0}^g  \sum_{\substack{r\leq n+d(h)-2g+2j-2 \\ r \text{ odd}}} q^{-2j} \label{error_explicit}\\
& \qquad\qquad\times  \sum_{V_2 \in \mathcal{M}_{(n+d(h)-r)/2-g+j-1}}\sum_{V_1 \in \mathcal{H}_r} \mathcal{H}(V_1;u,w)  \mathcal{K}(V,h;u,w) \,  \frac{du}{u^{j+1}} \, \frac{dw}{w^{n+1}}, \nonumber 
\end{align}
with $r_2=q^{1/2+\min\{\Re \gamma_j\}-\varepsilon}$.

For the application to Theorem \ref{theorem1}, we will keep the terms of the form $S_{{\bf C};1}^{\textrm{o}}(h;N)$ explicit as in the formula above. 
For the purpose of Theorem \ref{theorem2}, we will proceed to bound the term $S_{{\bf C};1}^{\textrm{o}}(h;N)$.

 Let $i_0$ be minimal such that $|u^{i_0}w| < q^{\max\{\Re \gamma_j\}}$. Then we write
\begin{equation*}
\mathcal{H}(V_1;u,w) =  \prod_{\gamma\in\bf C}\mathcal{L}\Big(\frac {w}{q^{1+\gamma}},\chi_{V_1}\Big) \mathcal{L}\Big(\frac{uw}{q^{1+\gamma}},\chi_{V_1}\Big)  \ldots \mathcal{L}\Big(\frac{u^{i_0-1}w}{q^{1+\gamma}}, \chi_{V_1}\Big) \mathcal{T}(V_1;u,w), 
\end{equation*} where $\mathcal{T}(V_1;u,w)$ is absolutely convergent in the selected region.
Using Theorem 3.3 in \cite{AT} and the remarks
in the proof of Lemma 7.1 in \cite{F1}, it follows that 
\begin{equation}
\mathcal{L}\Big(\frac{u^{i}w}{q^{1+\gamma}}, \chi_{V_1}\Big)\ll \exp\Big(\frac{r}{\log_q(r/2)}+2\sqrt{2qr}\Big) \label{lindelof}
\end{equation}
for any $1\leq i\leq i_0-1$. We also have $$ \mathcal{K}(V,h;u,w)  \ll_\varepsilon  |h|^{1/2+\varepsilon} \big| (h, V_2^2)\big|^{1/2}|V|^{\varepsilon}.$$ Trivially bounding the rest of the expression, we obtain that
$$ S_{{\bf C};1}^{\textrm{o}}(h;N) \ll_\varepsilon |h|^{1/2} q^{N/2-N\min\{\Re \gamma_j\} -2g+ \varepsilon g}.$$

\section{Proof of Theorem \ref{theorem1}}
\label{mainthm}
We write 
\begin{align}\label{start}
&\frac{1}{|\mathcal{H}_{2g+1}|}  \sum_{D\in \mathcal{H}_{2g+1}}  \frac{\prod_{j=1}^kL(1/2+\alpha_j,\chi_D)}{\prod_{j=1}^kL(1/2+\beta_j,\chi_D)}  \\
&\qquad = \sum_{h_1, \ldots, h_k \in \mathcal{M}} \frac{\prod_{j=1}^k \mu(h_j)}{ \prod_{j=1}^k |h_j|^{1/2+\beta_j}}\frac{1}{|\mathcal{H}_{2g+1}|}  \sum_{D \in \mathcal{H}_{2g+1}} \bigg(\prod_{j=1}^kL(\tfrac12+\alpha_j,\chi_D) \bigg)  \chi_D\bigg(\prod_{j=1}^k h_j\bigg) . \nonumber
\end{align}
For some parameter $X$ to be chosen later, let 
\begin{equation}
S_{k,\leq X} = \sum_{d(h_1), \ldots, d(h_k) \leq X} \frac{\prod_{j=1}^k \mu(h_j)}{ \prod_{j=1}^k |h_j|^{1/2+\beta_j}}  \frac{1}{|\mathcal{H}_{2g+1}|} \sum_{D \in \mathcal{H}_{2g+1}} \bigg(\prod_{j=1}^kL(\tfrac12+\alpha_j,\chi_D)\bigg)    \chi_D\bigg(\prod_{j=1}^k h_j\bigg),
\label{sfirst}
\end{equation}
and $S_{k,>X}$ denote the term in \eqref{start} where at least one polynomial $h_j$ has degree bigger than $X$. Let $S_{k,>X,1}$ denote the term in \eqref{start} where $d(h_1)>X$. 

We will now bound the contribution from $S_{k,>X,1}$. Using Perron's formula for the sum over $h_1$, we rewrite 
\begin{align}
S_{k,>X,1}  = \frac{1}{|\mathcal{H}_{2g+1}|}  & \frac{1}{ 2 \pi i} \oint  \sum_{D \in \mathcal{H}_{2g+1}} \frac{ \prod_{j=1}^kL(1/2+\alpha_j,\chi_D)}  {     \mathcal{L} \big( \frac{z_1}{q^{1/2+\beta_1}} ,\chi_D\big)  \prod_{j=2}^k L(1/2+\beta_j,\chi_D)} \frac{dz_1}{z_1^{X+1}(z_1-1)} ,
\label{multiple_integral}
\end{align}
where we are integrating over a circle $|z_1|>1$. We pick $|z_1| = q^{(1-\varepsilon)\Re \beta_1 }$. Using H\"{o}lder's inequality we have 
\begin{align*}
\sum_{D \in \mathcal{H}_{2g+1}} & \bigg|  \frac{ \prod_{j=1}^kL(1/2+\alpha_j,\chi_D) } {  \mathcal{L} \big( \frac{z_1}{q^{1/2+\beta_1}},\chi_D\big) \prod_{j=2}^k L(1/2+\beta_j,\chi_D)} \bigg| \leq \bigg( \sum_{D \in \mathcal{H}_{2g+1}}  \prod_{j=1}^k  \big| L(\tfrac12+\alpha_j,\chi_D)  \big| ^{\frac{1+\varepsilon}{\varepsilon}}   \bigg)^{\frac{\varepsilon}{1+\varepsilon}} \\
& \times \bigg(  \sum_{D \in \mathcal{H}_{2g+1}}  \frac{1}{  \big|\mathcal{L} \big( \frac{z_1}{q^{1/2+\beta_1}},\chi_D\big ) \prod_{j=2}^k L(1/2+\beta_j,\chi_D) \big|^{1+\varepsilon} } \bigg)^{\frac{1}{1+\varepsilon}}.
\end{align*}

For the first term above, we use Corollary $2.8$ in  \cite{F4} and get that 
\begin{align}
 \bigg( \sum_{D \in \mathcal{H}_{2g+1}}  \prod_{j=1}^k  \big| L(\tfrac12+\alpha_j,\chi_D)  \big| ^{\frac{1+\varepsilon}{\varepsilon}}   \bigg)^{\frac{\varepsilon}{1+\varepsilon}} \ll q^{\frac{2g\varepsilon}{1+\varepsilon}} g^{ \frac{k}{2} \big( \frac{k(1+\varepsilon)}{\varepsilon}+1 \big)}.
 \label{sum1}
\end{align}
Now using Theorem \ref{theorem3} we have 
\begin{align}
\bigg(  \sum_{D \in \mathcal{H}_{2g+1}}  \frac{1}{  \big|\mathcal{L} \big( \frac{z_1}{q^{1/2+\beta_1}},\chi_D\big ) \prod_{j=2}^k L(1/2+\beta_j,\chi_D) \big|^{1+\varepsilon} } \bigg)^{\frac{1}{1+\varepsilon}} \ll q^{\frac{2g}{1+\varepsilon}} \Big( \frac{ \log g }{\beta'} \Big)^{\frac{k}{2} (k(1+\varepsilon)+1 )}
\label{sum2}
\end{align}
for $\Re \beta_j \gg g^{-\frac{1}{2k}+\varepsilon}$, where $\beta'= \min\{  \varepsilon \Re \beta_1, \Re \beta_2, \ldots, \Re \beta_k\}.$ Combining equations \eqref{multiple_integral}, \eqref{sum1} and \eqref{sum2}, we obtain that
\begin{equation*}
S_{k,>X,1} \ll \frac{q^{ -(1-\varepsilon)X \Re  \beta_1 }}{\Re \beta_1}g^{ \frac{k}{2} \big( \frac{k(1+\varepsilon)}{\varepsilon}+1 \big)} \Big( \frac{ \log g }{\beta'} \Big)^{\frac{k}{2} (k(1+\varepsilon)+1 )} .
\label{bigx}
\end{equation*}
Bounding the rest of the terms in $S_{k,>X}$ is similar to bounding $S_{k,>X,1}$ and we obtain the bound
\begin{equation}
S_{k,>X} \ll_\varepsilon q^{- (1-\varepsilon)X \beta} ,\label{sk}
\end{equation}
where $\beta= \min \{ \Re \beta_1, \ldots, \Re \beta_k\}$.

Next we use Theorem \ref{theorem2} to evaluate the term \eqref{sfirst}. Once getting the main terms from Theorem \ref{theorem2}, we use the argument above to re-extend the sums over $h_j$ to all $h_j\in\mathcal{M}$ for $1\leq j\leq k$ at the cost of a negligible error term. A standard exercise with the Euler product then gives the main terms of Conjecture \ref{ratios}.

Now we will focus on bounding the contributions coming from the error terms $\widetilde{E}_k$. 

For the cases $k=2$ and $k=3$, we will simply use the bounds from Theorem \ref{theorem2} to bound the error terms. For the case $k=1$, we will keep the error terms in the proof of Theorem \ref{theorem2} explicit, and will exploit the cancellation provided by the M\"{o}bius function. 

We first bound the error terms in the cases $k=2,3$, which is more straightforward. By interchanging the sums over $D$ and $h_j$ and trivially bounding the sums over $h_j$, the error terms $\widetilde{E}_k$ in Theorem \ref{theorem2} will overall contribute  error terms of size
\begin{equation}
\label{error23}
\begin{cases}
\ll_\varepsilon q^{2X-(1-2\alpha) g+\varepsilon g}+q^{X-(1-4\alpha) g+\varepsilon g},& \mbox{ if } k=2 \\
\ll_\varepsilon q^{3X-(1/2-4\alpha) g+\varepsilon g}+q^{3X/2-(1-6\alpha)g+\varepsilon g}+ q^{-(1/4-4\alpha) g+\varepsilon g},  & \mbox{ if } k=3.
\end{cases}
\end{equation}
For $k=2$, if $1\leq 2 \alpha (\beta+3)$, then we pick $X = \frac{g (1-4 \alpha-\varepsilon)}{1+\beta-\beta \varepsilon}$ and we obtain Theorem \ref{theorem1} with an error term of size $q^{- (1-\varepsilon)g\beta  \frac{1-4 \alpha-\varepsilon}{1+\beta-\beta \varepsilon}}$. If $1>2 \alpha(\beta+3)$, we choose $X = \frac{g (1-2\alpha-\varepsilon)}{2+\beta-\beta \varepsilon}$, and we obtain Theorem \ref{theorem2} with an error term of size $q^{- (1-\varepsilon)g\beta \frac{1-2 \alpha-\varepsilon}{2+\beta-\beta \varepsilon}}$. Combining the two bounds gives the error term $E_2$ in Theorem \ref{theorem1}. 

For $k=3$, if $3(1-16 \alpha) \geq \beta$, we pick $X = \frac{g (1/2-4\alpha-\varepsilon)}{3+\beta-\beta \varepsilon}$ and we obtain Theorem \ref{theorem1} with an error term of size $q^{-(1-\varepsilon)g\beta \frac{1/2-4\alpha-\varepsilon}{3+\beta-\beta \varepsilon}}$. If $3(1-16 \alpha) < \beta$, we pick $X = \frac{g (1/4-4\alpha-\varepsilon)}{\beta-\beta \varepsilon}$ and we obtain Theorem \ref{theorem1} with an error term of size $q^{-g (1/4-4\alpha - \varepsilon)}.$ Combining the two bounds gives the error term $E_3$ in Theorem \ref{theorem1}.

\kommentar{For $k=2$, if $\alpha\leq 1/6$ then we pick $X= \frac{g(1-2\alpha)}{2+\beta-\beta \varepsilon}$ and we obtain Theorem \ref{theorem1} with an error term of size $q^{-g \beta ( \frac{1-2\alpha-\varepsilon}{2}-\varepsilon )(1-\varepsilon)}$. If $1/6 < \alpha <1/4$, then we pick $X= \frac{g (1-4\alpha-\varepsilon)}{1+\beta-\beta \varepsilon}$, and then we obtain Theorem \ref{theorem1} with an error term of size $q^{-g \beta (1-4 \alpha-\varepsilon)(1-\varepsilon)}$. Combining the two bounds gives the error term in Theorem \ref{theorem1}.

For $k=3$, we pick $X= \frac{g(1/2-4\alpha-\varepsilon)}{3+\beta-\beta \varepsilon}$, and then we obtain Theorem \ref{theorem1} with the given error term. }

\subsection{The case $k=1$}
Here, we follow along the proof of Theorem \ref{theorem2}, and in many places, instead of bounding the various error terms as in Theorem \ref{theorem2}, we keep them explicit and exploit the extra cancellation provided by the M\"{o}bius function, in order to obtain a better error term in Theorem \ref{theorem1}.

By trivially bounding the sum over $h$ in the expression for $S_{1, \leq X}$, the error term in equation \eqref{mc} will contribute a total error term of size $q^{X/2-3g/2- g \gamma +\varepsilon g}$, where we denote $\gamma=\Re\gamma_1$. 

Now in equation \eqref{2001}, note that
\[
\mathcal{A}_{\gamma_1}(u)=\prod_{P}\bigg(1-\frac{u^{2d(P)}}{|P|^{1+2\gamma_1}(|P|+1)}\bigg)
\]
has analytic continuation  for $|u|< q^{1+\gamma}$. We move the contour of integration to $|u|=q^{1+\gamma-\varepsilon}$, crossing three simple poles at $u=1$ and $u=\pm q^{\gamma_1}$. We keep the new integral as it is rather than bounding it trivially on the new contour. Let $E_{\leq X, \gamma_1}(N;V=0)$ be the error term obtained after introducing the sum over $h$. We write it as
\begin{align*}
E_{\leq X, \gamma_1}(N;V=0) = \sum_{h \in \mathcal{M}_{\leq X}} \frac{\mu(h)}{|h|^{1+\beta_1}} \frac{1}{2 \pi i} \oint \frac{ \mathcal{A}_{\gamma_1} (u) \mathcal{B}_{\gamma_1}(h;u) du}{u^{N-d(h)+1} (1-u)(1-q^{-2 \gamma_1} u^2)}, 
\end{align*}
where $|u| = q^{1+\gamma-\varepsilon}$. 
By a standard argument, it follows that
\begin{align*}
E_{\leq X,\gamma_1}(N;V=0) = \frac{1}{ (2 \pi i)^2} \oint \oint \frac{ \mathcal{A}_{\gamma_1}(u) \mathcal{F}_{\gamma_1}(u,y)  du \, dy}{u^{N+1} y^{X+1}(1-u)(1-y)(1-q^{-2 \gamma_1 } u^2)} ,
\end{align*}
where $\mathcal{F}_{\gamma_1}(u,y)$ is the generating series of the sum over $h$ and has the following Euler product:
$$\mathcal{F}_{\gamma_1} (u,y) = \prod_P \bigg( 1- \frac{(uy)^{d(P)}}{|P|^{1+\beta_1+\gamma_1}} \bigg( 1+ \frac{1}{|P|} - \frac{u^{2d(P)}}{|P|^{2+2 \gamma_1}} \bigg)^{-1} \bigg).$$  In the integral above, we are integrating along a circle $|y|<1$. Note that $\mathcal{F}_{\gamma_1}(u,y)$ has an analytic continuation for $|uy|<q^{1+\beta+\gamma}$. 
In the double integral above, we shift the contour over $y$ to $|y|=q^{\beta}$ and encounter a pole at $y=1$. The residue at $y=1$ is $$\ll_\varepsilon q^{-g - g \gamma+\varepsilon g}.$$ For the new integral 
we bound it trivially by $q^{-X\beta-g-g \gamma+\varepsilon g}$. Hence 
\begin{equation}
E_{\leq X,\gamma_1}(N;V= 0) \ll_\varepsilon q^{-g-g \gamma+\varepsilon g}.
\label{error_0}
\end{equation}  

For the error term in \eqref{second_error}, similarly as before, by trivially bounding the sum over $h$, we will get that overall that error term will be $\ll_\varepsilon q^{X/2-3g/2+g \alpha +\varepsilon g}$.
Now in equation \eqref{second_explicit}, similarly as before, after shifting the contour over $w$ to $|w|= q^{3/4+\min \{ 0, \gamma\}-2 \varepsilon}$, we keep the integral as it is and introduce the sum over $h$. Let $E_{\leq X, \gamma_1}(N;V=\square)$ denote this error term, which we rewrite as follows.
\begin{align*}
&E_{\leq X, \gamma_1} ( N; V=\square)  = \frac{1}{q-1} \sum_{h \in \mathcal{M}_{\leq X}} \frac{ \mu(h)}{|h|^{1+\beta_1}}  \nonumber  \\
&\qquad\qquad \times \frac{1}{(2 \pi i)^2}\oint  \oint  \frac{\mathcal{C}_{\gamma_1}(u,w)\mathcal{D}_{\gamma_1}(h;u,w) (1-qu)(1-q^{-(1+2\gamma_1)}w^2)  }{(1-u) (1-q^{-\gamma_1}w)(1-uw^2)(1-q^{-2\gamma_1}uw^2)(u-q^{-(2+\gamma_1)}w)} \\
&\qquad\qquad\qquad\qquad\qquad\qquad\times \frac{dwdu}{u^{\left[\frac{N+d(h)}{2}\right]-g}w^{2\left[\frac{N-d(h)}{2}\right]+d(h)+1}},
\end{align*}
where we are integrating over circles $|u|=q^{-3/2+\varepsilon}$ and $|w|=q^{3/4+\min\{0,\gamma\}-2 \varepsilon}$. We look at the generating series of the sum over $h$, and we have
\begin{align*}
\mathcal{M}(u,w,y) &:= \sum_{h \in \mathcal{M} } \frac{\mu(h) y^{d(h)} }{|h|^{1+\beta_1}u^{d(h)/2}} \mathcal{D}_{\gamma_1}(h;u,w) \\
&= \prod_P \bigg(1 - \frac{y^{d(P)}}{|P|^{1+\beta_1} u^{d(P)/2}}  \bigg(1+\frac{w^{d(P)}(1-u^{d(P)})}{|P|^{1+\gamma_1}}  -\frac{1}{|P|^2u^{d(P)}}\\ 
&\qquad\qquad-\frac{(uw^2)^{d(P)}}{|P|^{2+2\gamma_1}}+\frac{w^{2d(P)}}{|P|^{3+2\gamma_1}}\bigg)^{-1}\bigg(1-u^{d(P)}+\frac{(uw)^{d(P)}}{|P|^{\gamma_1}}-\frac{(uw)^{d(P)}}{|P|^{1+\gamma_1}}\bigg) \bigg) .
\end{align*}
Note that the generating series above has an analytic continuation for $|y|<q^{1+\beta} |u|^{1/2}$, $|u^{1/2}y|<q^{\beta}, |u^{1/2}wy| <q^{\beta+\gamma}, |wy|<q^{1+\beta+\gamma} |u|^{1/2}, |y|<q^{2+\beta} |u|^{3/2},|u^{1/2}w^2y|<q^{2+\beta+2 \gamma}$. We use Perron's formula for the sum over $h$, obtaining a triple integral for $E_{\leq X, \gamma_1}(N;V = \square)$, with the integral over $y$ being over a circle of radius $q^{-1/2+\beta}$. We then get that
\begin{equation}
E_{\leq X, \gamma_1}(N;V=\square) \ll_\varepsilon q^{X(1/2-\beta)-3g/2 - g \min \{0 ,\gamma\}+\varepsilon g}.
\label{error_sq}
\end{equation}
Next, the error term in bounding $S_{\gamma_1}^{\textrm{e};2}(h;N)$ after shifting the contour to $|u|=q^{-1-\varepsilon}$ will be $\ll_\varepsilon q^{X/2-3g/2-g\gamma+\varepsilon g}$ after trivially bounding the sum over $h$. Now let $E_{\leq X, \gamma_1}^{\textrm{e};2}(N)$ denote the term obtained after introducing the sum over $h$ in equation \eqref{s2}, namely
\begin{align*}
 E_{\leq X, \gamma_1}^{\textrm{e};2}(N)&   = \sum_{h \in \mathcal{M}_{\leq X}} \frac{\mu(h)}{|h|^{1/2+\beta_1}}  \frac{q^{-2(2+\gamma_1)\left[\frac{N-d(h_1)}{2}\right]}}{(q-1)|h|^{5/2+\gamma_1}} \\
&\qquad\qquad \times   \frac{1}{2 \pi i}\oint_{|u|=r_1} \frac{\mathcal{C}_{\gamma_1}(u,q^{2+\gamma_1}u)\mathcal{D}_{\gamma_1}(h;u,q^{2+\gamma_1}u) (1-qu)(1-q^{3}u^2)  }{(1-u) (1-q^{2}u)(1-q^{4+2\gamma_1}u^3)(1-q^{4}u^3)} \\
&\qquad\qquad\qquad\qquad\qquad\qquad\times \frac{du}{u^{\left[\frac{N+d(h_1)}{2}\right]+2\left[\frac{N-d(h_1)}{2}\right]-g+d(h_1)+1}},
\end{align*} 
where $|u|=q^{-3/2+\varepsilon}$.
We use Perron's formula for the sum over $h$, obtaining a double integral over $|y|=q^{-1/2}$ and $|u|=q^{-3/2+\varepsilon}$. As in the proof of Theorem \ref{theorem2}, we shift the contour of integration to $|u|=q^{-1-\varepsilon}$, encountering poles when $u^3=q^{-4}$ and $u^3 = q^{-4-2 \gamma_1}$. When $u^3=q^{-4}$, note that $\mathcal{M}(q^{-4/3}, q^{2/3+\gamma_1},y)$ converges absolutely for $|y|<q^{\beta-1/3}$, so shifting the contour to $|y|=q^{(1-\varepsilon)\beta-1/3}$, we get that the contribution to the double integral of the pole at $u^3=q^{-4}$ will be $\ll_\varepsilon q^{-X  \beta+X/3-4g/3 - g\gamma+\varepsilon g}$. When $u^3=q^{-4-2 \gamma_1}$, note that $\mathcal{M}(q^{-(4+2\gamma_1)/3},q^{(2+\gamma_1)/3},y)$ converges absolutely for $|y|<q^{\beta+\gamma/3-1/3}$, so we shift the contour to $|y|=q^{(1-\varepsilon)\beta+\gamma/3-1/3}$, and we get that the contribution to the double integral of the pole at $u^3=q^{-4-2\gamma_1}$ will be bounded by $q^{-X  \beta -X \gamma/3+X/3-4g/3-2g \gamma/3 +\varepsilon g}$. Combining these bounds, it follows that
\begin{equation}
 E_{\leq X, \gamma_1}^{\textrm{e};2}(N) \ll_\varepsilon q^{-X \beta+X/3-4g/3 -g \gamma+\varepsilon g}.
 \label{e2}
\end{equation}

Now we bound the error term coming from terms like $S_{{\bf C};1}^{\textrm{o}}(h;N)$ in Theorem \ref{theorem2} (see equation \eqref{error_explicit}. Let $E_{\leq X}(N;V \neq \square)$ denote the term corresponding to $V \neq \square$ in the proof of Theorem \ref{theorem2} after introducing the sum over $h$. Let $E_{\leq X,1}(N;V \neq \square)$ denote the term corresponding to $S_{{\bf C};1}^{\textrm{o}}(h;N)$. We will only bound $E_{\leq X,1}(N; V \neq \square)$, as bounding $E_{\leq X}(N; V \neq \square)$ is similar. We rewrite $E_{\leq X,1}(N; V \neq \square)$ as
\begin{align*}
& E_{\leq X,1}(N; V \neq \square) = \frac{q^{3/2}}{q-1}  \sum_{h \in \mathcal{M}_{\leq X}} \frac{ \mu(h)}{|h|^{3/2+\beta_1}}\frac{1}{(2 \pi i)^2} \oint_{|u|=q^{-\varepsilon}} \oint_{|w| = r_2} \sum_{\substack{n\leq N \\ n +d(h) \text{ odd}}}  \sum_{j=0}^g  \nonumber \\
& \quad\times  \sum_{\substack{r\leq n+d(h)-2g+2j-2 \\ r \text{ odd}}} q^{-2j}  \sum_{V_2 \in \mathcal{M}_{(n+d(h)-r)/2-g+j-1}}\sum_{V_1 \in \mathcal{H}_r} \mathcal{H}(V_1;u,w)  \mathcal{K}(V,h;u,w) \,  \frac{dwdu}{u^{j+1}w^{n+1}}.
\end{align*}

Now we look at the generating series of the sum over $h$. In the definition of $\mathcal{K}(V,h;u,w)$, let $A_P(V;u,w)$ denote the first Euler factor over primes $P |h$, let $B_P(V;u,w)$ denote the second Euler factor over primes $P|V, P \nmid h$ and let $C_P(V_1;u,w)^{-1}$ denote the Euler factor over primes $P \nmid V_1, P|hV_2$. Then we have
\begin{align*}
 \mathcal{H}(V_1&;u,w)   \sum_{h \in \mathcal{M}}  \frac{ \mu(h)}{|h|^{3/2+\beta_1}} \mathcal{K}(V,h;u,w) y^{d(h)}\\
 &= \mathcal{H}(V_1;u,w)  \prod_{P|V} B_P(V;u,w) \prod_{\substack{P \nmid V_1 \\ P|V_2}} C_P(V_1;u,w)^{-1}\\
&\qquad\qquad \times \prod_{P \nmid V} \bigg( 1-  \frac{\chi_{V_1}(P) y^{d(P)}}{|P|^{1+\beta_1}} C_P(V_1;u,w)^{-1} (1-u^{d(P)})^{-1} \bigg) \\
&\qquad\qquad \times   \prod_{P|V} \bigg(1- \frac{y^{d(P)}}{|P|^{3/2+\beta_1}} A_P(V;u,w) B_P(V;u,w)^{-1} \bigg) \\
&=  \prod_{P \nmid V_1} \bigg( 1+\frac{\chi_{V_1}(P) w^{d(P)}}{|P|^{1+\gamma_1}} (1-u^{d(P)})^{-1}-  \frac{\chi_{V_1}(P) y^{d(P)}}{|P|^{1+\beta_1}} (1-u^{d(P)})^{-1}  \bigg)\\
&\qquad\qquad \times  \prod_{P|V} B_P(V;u,w)\bigg(1-  \frac{y^{d(P)}}{|P|^{3/2+\beta_1}} A_P(V;u,w) B_P(V;u,w)^{-1} \bigg)\\
&\qquad\qquad \times  \prod_{\substack{P \nmid V_1 \\ P|V_2}} C_P(V_1;u,w)^{-1} \bigg( 1-  \frac{\chi_{V_1}(P) y^{d(P)}}{|P|^{1+\beta_1}} C_P(V_1;u,w)^{-1} (1-u^{d(P)})^{-1} \bigg)^{-1}.
\end{align*}
We then rewrite
\begin{align*}
& \mathcal{H}(V_1;u,w) \sum_{h \in \mathcal{M}}  \frac{ \mu(h)}{|h|^{3/2+\beta_1}} \mathcal{K}(V,h;u,w) y^{d(h)} = \mathcal{J}(V_1;u,w) \prod_{P|V_1} E_P(V;u,w) \prod_{\substack{P \nmid V_1 \\ P|V_2}} F_P(V;u,w),
\end{align*}
where
$$ \mathcal{J}(V_1;u,w) = \prod_{P \nmid V_1} \bigg( 1+ \frac{\chi_{V_1}(P) w^{d(P)}}{|P|^{1+\gamma_1}} (1-u^{d(P)})^{-1}- \frac{\chi_{V_1}(P) y^{d(P)}}{|P|^{1+\beta_1}} (1-u^{d(P)})^{-1}  \bigg),$$
$$ E_P(V;u,w) = B_P(V;u,w)\bigg(1-  \frac{y^{d(P)}}{|P|^{3/2+\beta_1}} A_P(V;u,w) B_P(V;u,w)^{-1} \bigg)$$
and
\begin{align*}
 F_P(V;u,w) &= B_P(V;u,w)\bigg(1-  \frac{y^{d(P)}}{|P|^{3/2+\beta_1}} A_P(V;u,w) B_P(V;u,w)^{-1} \bigg) \\
 &\qquad\quad \times C_P(V_1;u,w)^{-1} \bigg( 1-\frac{\chi_{V_1}(P) y^{d(P)}}{|P|^{1+\beta_1}} C_P(V_1;u,w)^{-1} (1-u^{d(P)})^{-1} \bigg)^{-1}.
\end{align*}
Using Perron's formula for the sum over $h$, we then get that
\begin{align*}
& E_{\leq X,1}(N;V \neq \square) \\
&\ = \frac{q^{3/2}}{(q-1)} \frac{1}{(2 \pi i)^3} \oint_{|u|=q^{-\varepsilon}} \oint_{|w| = r_2} \oint_{|y|=r_3} \sum_{m \leq X} \sum_{\substack{n\leq N \\ n +m \text{ odd}}}  \sum_{j=0}^g  \sum_{\substack{r\leq n+m-2g+2j-2 \\ r \text{ odd}}} q^{-2j} \nonumber \\
&  \quad\times  \sum_{V_2 \in \mathcal{M}_{(n+m-r)/2-g+j-1}}\sum_{V_1 \in \mathcal{H}_r} \mathcal{J}(V_1;u,w) \prod_{P|V_1} E_P(V;u,w) \prod_{\substack{P \nmid V_1 \\ P|V_2}} F_P(V;u,w)  \frac{dydwdu}{u^{j+1}w^{n+1}y^{m+1}},
\end{align*} 
where recall that $|w| = r_2=q^{1/2+\gamma- \varepsilon}$, and we pick $|y| = q^{1/2+\beta-\varepsilon}$. Similarly as in Section $3$, let $i$ be minimal such that $|u^i w|<q^{\gamma}$ and such that $|u^i y|<q^{\beta}$. We then write
$$ \mathcal{J}(V_1;u,w) = \frac{  \mathcal{L} \big(  \frac{w}{q^{1+\gamma_1}},\chi_{V_1} \big) \mathcal{L} \big(  \frac{uw}{q^{1+\gamma_1}},\chi_{V_1} \big) \cdot \ldots \cdot \mathcal{L} \big( \frac{u^{i-1} w}{q^{1+\gamma_1}}, \chi_{V_1} \big)}{   \mathcal{L} \big( \frac{y}{q^{1+\beta_1}}, \chi_{V_1} \big) \mathcal{L} \big( \frac{uy}{q^{1+\beta_1}} ,  \chi_{V_1}\big) \cdot \ldots \cdot \mathcal{L} \big(  \frac{u^{i-1} y}{q^{1+\beta_1}},\chi_{V_1} \big)} \mathcal{U}(V_1;u,w),$$ where $\mathcal{U}(V_1;u,w)$ is analytic in the selected region. Now we trivially bound the factors $E_P(V;u,w)$ and $F_P(V;u,w)$, and use the Cauchy-Schwarz inequality for the sum over $V_1$. For the $L$--functions in the numerator, we use the bound \eqref{lindelof}, while for the $L$--functions in the denominator we use Theorem \ref{theorem3}. We then get that
$$ \sum_{V_1 \in \mathcal{H}_r} \mathcal{J}(V_1;u,w) \prod_{P|V_1} E_P(V;u,w) \prod_{\substack{P \nmid V_1 \\ P|V_2}} F_P(V;u,w) \ll_\varepsilon q^{r+\varepsilon r}.$$ Trivially bounding all the other sums, we get that
$$E_{\leq X, 1} (N ;V \neq \square) \ll_\varepsilon q^{- X\beta+X/2 - 3g/2  - g \gamma+\varepsilon g}$$
and hence
\begin{equation} \label{enonsquare}
 E_{\leq X}(N;V \neq \square) \ll_\varepsilon q^{- X\beta+X/2 - 3g/2  - g \gamma+\varepsilon g.}
\end{equation} 

If $\Re \alpha_1<0$, combining equations \eqref{switchingalpha}, \eqref{error_0}, \eqref{error_sq}, \eqref{e2}, \eqref{enonsquare}, it follows that the error term in evaluating $S_{1,\leq X}$ will be
\begin{align}
\ll_\varepsilon q^{X/2-X\beta -3g/2+2g \alpha + \varepsilon g}+ q^{-X \beta+X/3-4g/3+g \alpha+\varepsilon g} + q^{-g+ g  \alpha+\varepsilon g}.
\nonumber
\end{align}
Now using the bound above and \eqref{sk}, we choose $X = \frac{ g(3-4 \alpha -2 \varepsilon)}{1 - 2 \beta \varepsilon}$ and the second bound for $E_1$ in \eqref{first_part} follows.

When $\Re \alpha_1 \geq 0$, the error term in evaluating $S_{1,\leq X}$ will be
$$ \ll_\varepsilon q^{X/2-X\beta-3g/2 - g \alpha+\varepsilon g}.$$ We choose $X= \frac{g (3+2\alpha-2 \varepsilon)}{1-2\beta \varepsilon}$ and obtain an error term of size $q^{-g  \beta( 3+2 \alpha-\varepsilon)}$ for $E_1$.


\kommentar{We bound the error term coming from terms like $S_{{\bf C};1}^{\textrm{o}}(h;N)$ in Theorem \ref{theorem2} (see equation \eqref{error_explicit}. Let $E_{k,\leq X}$ denote the term corresponding to $V \neq \square$ in the proof of Theorem \ref{theorem2}. Let $E_{k,\leq X,1}$ denote the term corresponding to $S_{{\bf C};1}^{\textrm{o}}(h;N)$. We will only bound $E_{k,\leq X,1}$, as bounding $E_{k,\leq X}$ is similar. We rewrite $E_{k,\leq X,1}$ as
\begin{align*}
E_{k,\leq X, 1} = \sum_{d(h_1),\ldots,d(h_k) \leq X} \frac{\prod_{j=1}^k \mu(h_j)}{\prod_{j=1}^k |h|_j^{1/2+\beta_j}} S_{{\bf C};1}^{\textrm{o}}(h;N), 
\end{align*}
and note that 
\begin{align*}
E_{k,\leq X,1} = \sum_{h \in \mathcal{M}_{\leq kX}} \frac{\mu_{\bf B}(h)}{\sqrt{|h|}} S_{{\bf C};1}^{\textrm{o}}(h;N) (1+O(q^{-X \min \{\gamma_j\}})).
\end{align*} \acom{Is this true?}
Now we look at the generating series of the sum over $h$. In the definition of $\mathcal{K}(V,h;u,w)$, let $A_P(V;u,w)$ denote the first Euler factor over primes $P |h$, let $B_P(V;u,w)$ denote the second Euler factor over primes $P|V, P \nmid h$ and let $C_P(V_1;u,w)^{-1}$ denote the Euler factor over primes $P \nmid V_1, P|hV_2$. Then we have
\begin{align*}
& \mathcal{H}(V_1;u,w) \sum_{h \in \mathcal{M}}  \frac{ \mu_{\bf B}(h)}{|h|^{3/2}} \mathcal{K}(V,h;u,w) y^{d(h)}= \mathcal{H}(V_1;u,w)  \prod_{P|V} B_P(V;u,w) \prod_{\substack{P \nmid V_1 \\ P|V_2}} C_P(V_1;u,w)^{-1}\\
& \times \prod_{P \nmid V} \Big( 1- \sum_{\beta \in \bf B} \frac{\chi_{V_1}(P) y^{d(P)}}{|P|^{1+\beta}} C_P(V_1;u,w)^{-1} (1-u^{d(P)})^{-1} \Big) \\
& \times \prod_{P|V} \Big(1- \sum_{\beta \in \bf B} \frac{y^{d(P)}}{|P|^{3/2+\beta}} A_P(V;u,w) B_P(V;u,w)^{-1} \Big) \\
&=  \prod_{P \nmid V_1} \Big( 1+ \sum_{\gamma \in \bf C} \frac{\chi_{V_1}(P) w^{d(P)}}{|P|^{1+\gamma}} (1-u^{d(P)})^{-1}- \sum_{\beta \in \bf B} \frac{\chi_{V_1}(P) y^{d(P)}}{|P|^{1+\beta}} (1-u^{d(P)})^{-1}  \Big)\\
& \times  \prod_{P|V} B_P(V;u,w)\Big(1- \sum_{\beta \in \bf B} \frac{y^{d(P)}}{|P|^{3/2+\beta}} A_P(V;u,w) B_P(V;u,w)^{-1} \Big)\\
& \times  \prod_{\substack{P \nmid V_1 \\ P|V_2}} C_P(V_1;u,w)^{-1} \Big( 1- \sum_{\beta \in \bf B} \frac{\chi_{V_1}(P) y^{d(P)}}{|P|^{1+\beta}} C_P(V_1;u,w)^{-1} (1-u^{d(P)})^{-1} \Big)^{-1}.
\end{align*}
We then rewrite
\begin{align*}
& \mathcal{H}(V_1;u,w) \sum_{h \in \mathcal{M}}  \frac{ \mu_{\bf B}(h)}{|h|^{3/2}} \mathcal{K}(V,h;u,w) y^{d(h)} = \mathcal{J}(V_1;u,w) \prod_{P|V_1} E_P(V;u,w) \prod_{\substack{P \nmid V_1 \\ P|V_2}} F_P(V;u,w),
\end{align*}
where
$$ \mathcal{J}(V_1;u,w) = \prod_{P \nmid V_1} \Big( 1+ \sum_{\gamma \in \bf C} \frac{\chi_{V_1}(P) w^{d(P)}}{|P|^{1+\gamma}} (1-u^{d(P)})^{-1}- \sum_{\beta \in \bf B} \frac{\chi_{V_1}(P) y^{d(P)}}{|P|^{1+\beta}} (1-u^{d(P)})^{-1}  \Big),$$
$$ E_P(V;u,w) = B_P(V;u,w)\Big(1- \sum_{\beta \in \bf B} \frac{y^{d(P)}}{|P|^{3/2+\beta}} A_P(V;u,w) B_P(V;u,w)^{-1} \Big),$$
\begin{align*}
 F_P(V;u,w) &= B_P(V;u,w)\Big(1- \sum_{\beta \in \bf B} \frac{y^{d(P)}}{|P|^{3/2+\beta}} A_P(V;u,w) B_P(V;u,w)^{-1} \Big) \\
 & \times C_P(V_1;u,w)^{-1} \Big( 1- \sum_{\beta \in \bf B} \frac{\chi_{V_1}(P) y^{d(P)}}{|P|^{1+\beta}} C_P(V_1;u,w)^{-1} (1-u^{d(P)})^{-1} \Big)^{-1}.
\end{align*}
Using Perron's formula for the sum over $h$, we then get that
\begin{align*}
& E_{k,\leq X,1} = \frac{q^{3/2}}{(q-1)} \frac{1}{(2 \pi i)^3} \oint_{|u|=q^{-\varepsilon}} \oint_{|w| = r_2} \oint_{|y|=r_3} \sum_{m \leq kX} \sum_{\substack{n\leq N \\ n +m \text{ odd}}}  \sum_{j=0}^g  \sum_{\substack{r\leq n+m-2g+2j-2 \\ r \text{ odd}}} q^{-2j} \nonumber \\
&  \times  \sum_{V_2 \in \mathcal{M}_{(n+m-r)/2-g+j-1}}\sum_{V_1 \in \mathcal{H}_r} \mathcal{J}(V_1;u,w) \prod_{P|V_1} E_P(V;u,w) \prod_{\substack{P \nmid V_1 \\ P|V_2}} F_P(V;u,w) \\
& \times  \frac{du}{u^{j+1}} \, \frac{dw}{w^{n+1}} \, \frac{dy}{y^{m+1}},
\end{align*} 
where recall that we pick $|w| = r_2=q^{1/2+\min\{\Re \gamma_j\}- \varepsilon}$, and we pick $|y| = q^{1/2+\min\{\Re \beta_j\}-\varepsilon}$. Similarly as in section $4$, let $i$ be minimal such that $|u^i w|<q^{\max\{\Re \gamma_j\}}$ and such that $|u^i y|<q^{\max\{\Re \beta_j\}}$. We then write
$$ \mathcal{J}(V_1;u,w) = \frac{ \prod_{\gamma \in \bf C} \mathcal{L} \Big(  \frac{w}{q^{1+\gamma}},\chi_{V_1} \Big) \mathcal{L} \Big(  \frac{uw}{q^{1+\gamma}},\chi_{V_1} \Big) \cdot \ldots \cdot \mathcal{L} \Big( \frac{u^{i-1} w}{q^{1+\gamma}}, \chi_{V_1} \Big)}{  \prod_{\beta \in \bf B} \mathcal{L} \Big( \frac{y}{q^{1+\beta}}, \chi_{V_1} \Big) \mathcal{L} \Big( \frac{uy}{q^{1+\beta}} ,  \chi_{V_1}\Big) \cdot \ldots \cdot \mathcal{L} \Big(  \frac{u^{i-1} y}{q^{1+\beta}},\chi_{V_1} \Big)} \mathcal{U}(V_1;u,w),$$ where $\mathcal{U}(V_1;u,w)$ is analytic in the selected region. Now we trivially bound the factors $E_P(V;u,w)$ and $F_P(V;u,w)$, and use the Cauchy-Schwarz inequality for the sum over $V_1$. For the $L$--functions in the numerator, we use the bound \eqref{lindelof}, while for the $L$--functions in the denominator we use Theorem \ref{theorem3}. We then get that
$$ \sum_{V_1 \in \mathcal{H}_r} \mathcal{J}(V_1;u,w) \prod_{P|V_1} E_P(V;u,w) \prod_{\substack{P \nmid V_1 \\ P|V_2}} F_P(V;u,w) \ll q^{r+r \varepsilon}.$$ Trivially bounding all the other sums, we get that
$$E_{k ,\leq X, 1} \ll q^{N/2+kX/2 - 2g - kX \min \{\Re \beta_j\} - N \min\{\Re \gamma_j\}+\varepsilon g},$$ and hence
$$E_{k ,\leq X} \ll  q^{N/2+kX/2 - 2g - kX \min \{\Re \beta_j\} - N \min\{\Re \gamma_j\}+\varepsilon g}.$$
}

\section{Proof of Theorem \ref{theorem3}}\label{sectiontheorem3}

To prove Theorem \ref{theorem3}, we need the following lower bound.
\begin{lemma}
Let $\beta > 0$ and let $N$ be a positive integer. Then
\begin{align*}
\log | L( \tfrac12+\beta+ it, \chi_D) | &\geq \frac{2g}{N+1} \log \Big( \frac{ 1- q^{- (N+1)\beta}}{1- q^{-2(N+1)}} \Big) +  \Re \bigg( \sum_{d(f ) \leq N} \frac{ b_{\beta} (d(f)) \chi_D(f) \Lambda(f)}{|f|^{1/2+it}} \bigg) \\
&\qquad\qquad+O(1),
\end{align*}
where $b_{\beta}(n)$ is given in \eqref{balpha}.
 \label{lb}
\end{lemma}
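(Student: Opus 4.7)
The starting point is the absolutely-convergent expansion
$$\log L(s,\chi_D) = \sum_f \frac{\Lambda(f)\chi_D(f)}{d(f)|f|^s} \qquad (\Re s > 1/2),$$
valid in $\Re s > 1/2$ by the Riemann Hypothesis for curves. Taking real parts at $s = 1/2+\beta+it$ and subtracting the same expansion at $s = 5/2+it$ (whose modulus is bounded by $\log\zeta_q(5/2) = O(1)$) yields
$$\log|L(\tfrac12+\beta+it,\chi_D)| = O(1) + \Re\sum_f \frac{(q^{-d(f)\beta} - q^{-2d(f)})\Lambda(f)\chi_D(f)}{d(f)|f|^{1/2+it}},$$
which should match the coefficients $b_\beta(d(f))$ of \eqref{balpha}. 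I would then split the sum at $d(f) = N$, identifying the Dirichlet polynomial of the lemma and leaving a tail over $d(f) > N$ to be bounded from below.

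For the tail I would apply a roots-of-unity filter. Writing $\omega = e^{2\pi i/(N+1)}$ and $t_k = t - 2\pi k/((N+1)\log q)$ for $0 \leq k \leq N$, the relation $|f|^{-it_k} = |f|^{-it}\omega^{k d(f)}$ and the orthogonality $\sum_k \omega^{kn} = (N+1)\mathbf{1}_{(N+1)\mid n}$ give
$$\frac{1}{N+1}\sum_{k=0}^N \bigl[\log|L(\tfrac12+\beta+it_k,\chi_D)| - \log|L(\tfrac52+it_k,\chi_D)|\bigr] = \Re\sum_{(N+1)\mid d(f)} \frac{(q^{-d(f)\beta}-q^{-2d(f)})\Lambda\chi_D}{d(f)|f|^{1/2+it}}.$$
Using the Weil factorization $\mathcal{L}(u,\chi_D) = \prod_{j=1}^{2g}(1-uq^{1/2}e^{-2\pi i\theta_{j,D}})$ together with the elementary identity $\prod_{k=0}^N(1-\omega^k z) = 1-z^{N+1}$, one obtains after bounding each resulting factor individually the pair of estimates
$$\sum_{k=0}^N \log|L(\tfrac12+\beta+it_k,\chi_D)| \geq 2g\log(1-q^{-(N+1)\beta}), \quad \sum_{k=0}^N \log|L(\tfrac52+it_k,\chi_D)| \leq 2g\log(1+q^{-2(N+1)}).$$
Subtracting, dividing by $N+1$, and noting that $\log\frac{1+q^{-2(N+1)}}{1-q^{-2(N+1)}} = O(q^{-2(N+1)})$ is absorbable into $O(1)$, one recovers precisely the stated main term $\frac{2g}{N+1}\log\frac{1-q^{-(N+1)\beta}}{1-q^{-2(N+1)}}$.

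The main obstacle I anticipate is estimating the complementary tail contribution, from $d(f) > N$ with $(N+1) \nmid d(f)$, by $O(1)$ uniformly in $\beta$. The crude bound via the GRH-type estimate $|\sum_{f\in\mathcal{M}_n}\Lambda(f)\chi_D(f)| \leq 2gq^{n/2}$ gives only $O\bigl(g\sum_{n>N}q^{-n\beta}/n\bigr)$, which is insufficient when $(N+1)\beta$ is not large. Controlling this leftover uniformly is where the precise (likely Fejer-smoothed) definition of $b_\beta(n)$ in \eqref{balpha} is decisive, and constitutes the key technical point. Once this error is absorbed, reassembling the truncated Dirichlet polynomial, the filtered tail lower bound, and the $O(1)$ remainders produces the claimed inequality.
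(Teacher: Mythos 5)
Your approach is genuinely different from the paper's, and unfortunately the gap you flag at the end is fatal rather than a patchable technical point. The paper does not start from the Dirichlet series for $\log L$ at all: it writes $\log|L(\tfrac12+\beta+it,\chi_D)| = -\tfrac12\sum_{j}f_2(\theta_j)+O(1)$ with $f_2$ an explicit log-quotient of $\sin^2$ functions, constructs via the Carneiro--Littmann--Vaaler Gaussian-subordination theory the \emph{optimal trigonometric majorant} $r_\mu$ of degree $\leq N$ for that function, and then applies the explicit formula. The $b_\beta(n)$ of \eqref{balpha} are exactly the Fourier coefficients of that extremal majorant, and the main term $\tfrac{2g}{N+1}\log\bigl(\tfrac{1-q^{-(N+1)\beta}}{1-q^{-2(N+1)}}\bigr)$ is $-g\,\widehat{r}_\mu(N,0)$. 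The majorant is not an ingredient you can graft onto your filter argument to ``absorb the leftover''; it replaces the decomposition wholesale, because the whole point is to never have to control the tail of the raw Dirichlet series.

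Concretely, your roots-of-unity average only produces a lower bound for the \emph{arithmetic-progression} piece $\Re\sum_{(N+1)\mid n}a_n$, not for the full tail $\Re\sum_{n>N}a_n$. The complementary part $n>N$, $(N+1)\nmid n$ is not $O(1)$: the Weil bound gives each summand magnitude up to $\asymp g\,q^{-n\beta}/n$, and in the regime $\beta\asymp g^{-1/2+\varepsilon}$ (which is exactly where the lemma is used in Theorem \ref{theorem3}) the sum is of size $g/(N\beta)\gg g^{3/2-\varepsilon}/N$, dwarfing the claimed main term. There is no sign control or cancellation you can invoke deterministically for a single $D$. Relatedly, the coefficients your computation would produce for $d(f)\leq N$, namely $(q^{-n\beta}-q^{-2n})/n$, are \emph{not} the $b_\beta(n)$ of \eqref{balpha}; those have extra Fej\'er-type corrections coming from the $j\geq1$ terms of the majorant's Fourier expansion, which are non-negligible precisely when $(N+1)\beta$ is not large. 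So the statement, with the paper's $b_\beta(n)$, cannot be reached by this route. (Two smaller issues: the expansion of $\log L$ converges conditionally, not absolutely, on $\Re s>1/2$; and your $\log(1+q^{-2(N+1)})$ versus $\log(1-q^{-2(N+1)})$ swap only costs $O(1)$ when $N\gg\log g$, which covers the applications but not the lemma as stated for all $N$.) If you want to avoid the full CLV machinery, the workable alternative is Soundararajan's convexity/smoothing inequality with Selberg- or Fej\'er-weighted coefficients rather than sharp truncation -- but that is again a majorant idea, not a filter.
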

\begin{proof}
The proof is similar to the proof of Lemma $8.1$ in \cite{F4} and we will only sketch it. Let
$$f(x) = \log \Big(  \frac{a^2+ \sin^2 x}{b^2 + \sin^2 x} \Big)$$ with
$$ a = \frac{q^2-1}{2q}\qquad\text{and}\qquad  b= \frac{q^{\beta}-1}{2q^{\beta/2}}.$$
Equation $8.2$ in \cite{F4} gives
$$ \log | L( \tfrac12+\beta+it, \chi_D) | = -\frac{1}{2} \sum_{j=1}^{2g} f_2(\theta_j)+O(1),$$ where
$$  f_1(x) = f( \pi x) - ( 2 - \beta ) \log q\qquad\text{and} \qquad f_2(x) = f_1 \Big( x - \frac{t \log q}{2 \pi }\Big).$$
We want to find an appropriate majorant for $f_1$ and then use the explicit formula. We will prove the following.

\begin{lemma}
Let $\beta > 0$ and let $N$ be a positive integer. If $r$ is a real valued trigonometric polynomial of degree at most $N$ such that $r(x) \geq f_1(x)$ for all $x\in\mathbb{R}/ \mathbb{Z}$, then
$$ \int_{\mathbb{R} / \mathbb{Z}} r(x) \, dx \geq -\frac{2}{N+1} \log \Big( \frac{1 - q^{-(N+1)  \beta}}{1-q^{-2(N+1)}}\Big), $$ with equality if and only if $r(x) = \sum_{|n| \leq N} \widehat{r}_\mu(N,n) e(nx),$ where $\widehat{r}_\mu(N,n)$ is given in \eqref{rmun1} and \eqref{rmun2}.
\label{majorant}
\end{lemma}
\begin{proof}
The proof is similar to the proof of Lemma $8.3$ in \cite{F4}. Here, we will use ideas from \cite{CLV}. Following the notation in \cite{CLV}, let $G_{\lambda}(x) = e^{-\pi \lambda x^2}$ with $\lambda>0$ and let
$$ M_{\lambda}(x) = \Big( \frac{\sin \pi x}{\pi} \Big)^2\bigg( \sum_{n=-\infty}^{\infty} \frac{ G_{\lambda}(n)}{(x-n)^2} + \sum_{n=-\infty}^{\infty} \frac{ G_{\lambda}'(n)}{x-n}\bigg).$$
For $x \in \mathbb{R} / \mathbb{Z}$, let 
$$ m(\lambda, N,x) = \frac{\lambda^{1/2}}{N+1} \sum_{|n| \leq N} \widehat{M} \Big(  \frac{\lambda}{(N+1)^2}, \frac{n}{N+1} \Big) e(nx).$$
For $\tau$ a complex number with $\Im \tau>0$, let
$$ \theta_3(v,\tau) = \sum_{n=-\infty}^{\infty} e^{ \pi i \tau n^2} e(nv).$$ Similarly as in the proof of Lemma $8.3$ in \cite{F4}, let $p: (0 , \infty) \times \mathbb{R}/ \mathbb{Z}\rightarrow\mathbb{R}$ be defined by
$$p (\lambda, x) = -\lambda^{-1/2} + \lambda^{- 1/2} \sum_{n=-\infty}^{\infty}  e^{- \pi\lambda^{-1} n^2}e(nx) = - \lambda^{-1/2}+ \lambda^{-1/2} \theta_3(x, i \lambda^{-1}).$$
Theorem $6$ in \cite{CLV} shows that if $r(x)$ is a real trigonometric polynomial of degree at most $N$ such that $r(x)\geq p (\lambda, x) $ for all $x \in \mathbb{R}/ \mathbb{Z}$, then 
$$\int_{\mathbb{R}/\mathbb{Z}} r(x) \, dx\geq - \lambda^{-1/2}+ \lambda^{-1/2} \theta_3\big(0, i \lambda^{-1} (N+1)^2\big),$$ with equality if and only if $r(x) = -\lambda^{-1/2}+ \lambda^{-1/2} m(\lambda, N,x)$.

Now let $\mu$ be the finite non-negative Borel measure on $(0,\infty)$ defined by
$$ d \mu(\lambda) = \frac{e^{- \pi \lambda c^2} - e^{- \pi \lambda d^2}}{\lambda} \, d \lambda,$$ where $0<c<d$, and let $h_{\mu}(x) = \int_0^{\infty} p(\lambda, x) d \mu(\lambda).$ Define
$$ r_{\mu}(x) = \sum_{|n| \leq N} \widehat{r}_{\mu}(N,n) e(nx),$$ where
\begin{equation}\label{rmun1}
\widehat{r}_{\mu}(N,n) =\frac{1}{N+1} \int_0^{\infty} \widehat{M} \Big(  \frac{\lambda}{(N+1)^2}, \frac{n}{N+1} \Big) d \mu(\lambda)
\end{equation} for $n \neq 0$ and
\begin{equation}\label{rmun2} 
\widehat{r}_{\mu}(N,0) = \int_0^{\infty} \Big( -\lambda^{-1/2} + \lambda^{-1/2} \theta_3\big(0, i \lambda^{-1} (N+1)^2\big) \Big) d \mu(\lambda).
\end{equation}
Using Theorem $6$  and Corollary $17$ in \cite{CLV}, it follows that $r_{\mu}(x)$ is the optimal majorant for $h_{\mu}(x)$. That is to say if $r(x) \geq h_{\mu}(x)$ for all $x \in \mathbb{R}/ \mathbb{Z}$, then $$\int_{\mathbb{R}/\mathbb{Z}} r(x) \, dx \geq \int_{\mathbb{R}/ \mathbb{Z}} r_{\mu}(x) \, dx.$$

We pick $$c = \frac{\beta\log q}{2 \pi }\qquad\text{and}\qquad d = \frac{ \log q}{ \pi }.$$ From the proof of Lemma $8.3$ in \cite{F4}, we have that $h_{\mu}=f_1.$ Note that
\begin{align*}
\widehat{r}_{\mu}(N,0) &= \int_0^{\infty} \Big( -\lambda^{-1/2}+ \lambda^{-1/2} \theta_3\big( 0, i \lambda^{-1} (N+1)^2\big) \Big) d \mu(\lambda) \\
&= -\frac{2}{N+1} \log \Big( \frac{1-q^{-(N+1)\beta}}{1-q^{-2(N+1)}}\Big),
\end{align*} 
which finishes the proof of Lemma \ref{majorant}.
\end{proof}

Now we go back to the proof of Lemma \ref{lb}. By the explicit formula in Lemma \ref{explicit} and Lemma \ref{majorant} it follows that 
\begin{align*}
\log | L( \tfrac12+\beta+it,\chi_D) | &\geq \frac{2g}{N+1} \log \Big(  \frac{1- q^{-(N+1)\beta}}{1- q^{-2(N+1)}} \Big) \\
&\qquad\qquad+ \Re \bigg( \sum_{d(f)\leq N} \frac{ \widehat{r}_\mu(N,d(f)) \chi_D(f) \Lambda(f)}{|f|^{1/2+it}} \bigg)+O(1),
\end{align*} where $r_\mu$ is the optimal majorant found in Lemma \ref{majorant}. Let $b_{\beta}(n) = \widehat{r}_\mu(N,n)$. From the proof of Lemma \ref{majorant}, recall that for $n \neq 0,$
$$ b_{\beta}(n) = \frac{1}{N+1}\int_0^{\infty} \widehat{M} \Big(  \frac{\lambda}{(N+1)^2}, \frac{n}{N+1} \Big) d \mu(\lambda),$$ and from Theorem $4$ in \cite{CLV} we have
\begin{align*}
\widehat{M} \Big(  \frac{\lambda}{(N+1)^2}, \frac{n}{N+1} \Big) &= \Big( 1- \frac{ |n|}{N+1} \Big) \theta_3 \Big(  \frac{n}{N+1}, \frac{ i \lambda}{(N+1)^2} \Big)\\
&\qquad\qquad -\frac{ \lambda\,\text{sgn} (n)}{2 \pi(N+1)^2}  \frac{ \partial \theta_3}{\partial t} \Big(  \frac{n}{N+1}, \frac{i \lambda}{(N+1)^2} \Big).
\end{align*}
Similarly as in the proof of Lemma $8.3$ in \cite{F4}, for $|n| \leq N$, we have that
\begin{align}
b_{\beta}(n)& =  \sum_{j=0}^{\infty}  (j+1) \bigg( \frac{1}{|n|+j(N+1)} \Big( q^{-(|n|+j(N+1))\beta}-q^{-2(|n|+j(N+1))} \Big) \nonumber \\
&\qquad\quad-\frac{1}{(j+2)(N+1) - |n|} \Big( q^{(|n|-(j+2)(N+1))\beta} - q^{2(|n|-(j+2)(N+1))} \Big) \bigg)\nonumber\\
&=\sum_{j=0}^{\infty}  (j+1) \bigg( \frac{q^{-(|n|+j(N+1))\beta}}{|n|+j(N+1)} -\frac{q^{(|n|-(j+2)(N+1))\beta}}{(j+2)(N+1) - |n|}\bigg)+O\Big(\frac{q^{-2|n|}}{|n|}\Big). \label{balpha}
\end{align}
\end{proof}

Using Lemma \ref{lb}, we can prove the following. Similar bounds for the Riemann zeta-function were obtained in \cite{CC}.
\begin{lemma}
\label{initial}
If $0<\beta\ll 1/\log g$, then we have
\begin{align} \frac{1}{ |L(1/2+\beta+it,\chi_D)|} &\leq  \exp \bigg( -\Big(1+O\Big(\frac{1}{\log g}\Big)\Big) \frac{g}{\log_q g} \log(1-g^{-2\beta}) \bigg). \label{bd1}
\end{align}
If $0<\beta<1/2$ such that $(1/2-\beta) \log g = O(1)$, then
\begin{equation}
  \frac{1}{ |L(1/2+\beta+it,\chi_D)|} \ll \log g. \label{bd2}
  \end{equation}
If $0<\beta<1/2$ and neither of the above two conditions on $\beta$ are satisfied, then
\begin{align}
 \frac{1}{ |L(1/2+\beta+it,\chi_D)|} \leq \exp \Bigg( \frac{g^{1-2 \beta}}{\log_q g} \bigg(2+ \frac{q^{1/2+\beta}-q^{1/2-\beta}}{(q^{1/2-\beta}-1)(q^{1/2+\beta}-1)} \bigg) \bigg( 1+ O \bigg( \frac{1}{\log g} \bigg) \bigg)\Bigg).\label{bd3}
\end{align}
\end{lemma}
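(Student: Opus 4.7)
The approach is to apply Lemma~\ref{lb}, which gives a lower bound on $\log|L(1/2+\beta+it,\chi_D)|$, and then negate and exponentiate. Since the estimate must be uniform in $D$ and $t$, we cannot exploit cancellation inside the Dirichlet polynomial; instead, combining $|\chi_D(f)|\leq 1$ with $\sum_{d(f)=n}\Lambda(f)=q^n$ produces
\[
\frac{1}{|L(1/2+\beta+it,\chi_D)|}\leq\exp\bigg(-\frac{2g}{N+1}\log\frac{1-q^{-(N+1)\beta}}{1-q^{-2(N+1)}}+\sum_{n=1}^N|b_\beta(n)|\,q^{n/2}+O(1)\bigg)
\]
for any positive integer $N$, which will be chosen in each regime.

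I would then analyse $b_\beta(n)$ via \eqref{balpha}. The dominant contributions are $A_0(n):=q^{-n\beta}/n$ and $B_0(n):=q^{(n-2(N+1))\beta}/(2(N+1)-n)$, with the remaining $j\geq 1$ terms smaller by $O(q^{-(N+1)\beta})$. The inequalities $A_j>B_j$ for all $j$ (which follow from $n\leq N$ and $\beta>0$) imply $b_\beta(n)\geq 0$, so $|b_\beta(n)|=b_\beta(n)=A_0(n)-B_0(n)+\text{smaller}$. The Dirichlet-polynomial sum then splits into two geometric-type sums: one of shape $\sum_{n=1}^N q^{n(1/2-\beta)}/n$ from $A_0$, and one of shape $q^{N+1}\sum_{m=N+2}^{2N+1}q^{-m(1/2+\beta)}/m$ from $B_0$ after the substitution $m=2(N+1)-n$, the latter entering with a \emph{minus} sign. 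Each is controlled by the asymptotic $\sum_{n=1}^N q^{an}/n\sim \frac{q^{(N+1)a}}{N(q^a-1)}(1+O(1/N))$ when $|a|N\gg 1$, and $\sim\log N+O(1)$ when $|a|N=O(1)$.

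The choice $N+1=\lfloor 2\log_q g\rfloor$ aligns the leading $g$-dependence with each target, since $q^{-(N+1)\beta}=g^{-2\beta}(1+O(1/\log g))$ and $\frac{2g}{N+1}=\frac{g}{\log_q g}(1+O(1/\log g))$. In regime~(iii), both the main term and Dirichlet polynomial contribute at scale $g^{1-2\beta}/\log_q g$; the $A_0$ sum gives the factor $\frac{1}{q^{1/2-\beta}-1}$, the $B_0$ sum (with its minus sign) gives $-\frac{1}{q^{1/2+\beta}-1}$, and adding the unit main-term contribution produces the stated coefficient $2+\frac{q^{1/2+\beta}-q^{1/2-\beta}}{(q^{1/2-\beta}-1)(q^{1/2+\beta}-1)}$. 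In regime~(i), the cancellation between the $A_0$ and $B_0$ geometric sums is crucial: at $\beta=0$ they coincide up to a relative $O(1/N)$ error, so for $\beta\ll 1/\log g$ their difference is $O(g/(\log_q g)^2)$, absorbed into the $(1+O(1/\log g))$ tolerance in front of the main term $-\frac{g}{\log_q g}\log(1-g^{-2\beta})$. In regime~(ii), where $(1/2-\beta)\log g=O(1)$, the main term is $O(1)$ while the geometric sum degenerates to a harmonic sum of size $\log\log g+O(1)$, producing $1/|L|\ll\log g$ after exponentiation.

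The main technical obstacle is retaining the sign structure $b_\beta(n)=A_0-B_0+\ldots$ rather than applying the triangle inequality, which would destroy the minus sign in regime~(iii) and the near-cancellation in regime~(i). Equally delicate is controlling three sources of multiplicative error --- the $O(1/N)=O(1/\log g)$ from the geometric-sum asymptotic, the $O(g^{-4})$ from suppressing the factor $1-q^{-2(N+1)}$ in the main term, and the $O(q^{-(N+1)\beta})$ from discarding the $j\geq 1$ terms in \eqref{balpha} --- each of which must be shown to be subordinate to the leading term in the respective regime, so that the stated $(1+O(1/\log g))$ factor is valid.
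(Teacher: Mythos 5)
Your strategy — invoke Lemma~\ref{lb} with $N+1 = 2\log_q g$, bound $\chi_D(f)$ trivially, apply the Prime Polynomial Theorem plus partial summation to the Dirichlet polynomial, and split into the same three regimes — matches the paper's. Retaining the sign structure of $b_\beta(n)$ rather than using the triangle inequality crudely is also the right move.

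There is, however, a step that would not survive execution: you characterize the tail $j\geq 1$ of \eqref{balpha} as ``smaller by $O(q^{-(N+1)\beta})$'' and list $O(q^{-(N+1)\beta})$ among the subordinate multiplicative errors. In regime (iii), where $(N+1)\beta\gg 1$, this is adequate because the geometric decay in $j$ genuinely kills the tail. But in regime (i) one has $\beta\ll 1/\log g$, so $q^{-(N+1)\beta} = g^{-2\beta} = 1+o(1)$, which is nowhere near $O(1/\log g)$; the claimed suppression factor does nothing, and the tail must be analysed more carefully. What the paper does in \eqref{4002} is to show that for each $n\leq N$ the $j\geq 1$ part of $b_\beta(n)$ is $-\frac{2(N+1-n)}{(N+1)^2}\log\big(1-q^{-(N+1)\beta}\big)+O\big(\frac{N+1-n}{N^2}\big)$: the tail reproduces a copy of the \emph{same} logarithmic factor as the main term, damped by an explicit $1/N$ together with the weight $(N+1-n)/(N+1)$. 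Only after feeding this through the $q^{n(1/2-\beta)}$-weighted sum (which concentrates at $n\approx N$, turning $(N+1-n)/(N+1)$ into another $O(1)$) does the $j\geq 1$ contribution emerge as $O(1/\log g)$ times the main term, which is precisely the tolerance in \eqref{bd1}. Without identifying this structure the $(1+O(1/\log g))$ factor in regime (i) cannot be obtained. A further, minor bookkeeping slip appears in regime (iii): relative to the scale $g^{1-2\beta}/(2\log_q g)$ that the partial sum naturally produces, the negated main term contributes $2$, not $1$ (because of the prefactor $\tfrac{2g}{N+1}=\tfrac{g}{\log_q g}$), and it is this factor of $2$ that combines with the $A_0-B_0$ contribution to give the stated $2+\frac{q^{1/2+\beta}-q^{1/2-\beta}}{(q^{1/2-\beta}-1)(q^{1/2+\beta}-1)}$.
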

\begin{proof}
Let $N+1=2 \log_q g$. Using Lemma \ref{lb} and the expression \eqref{balpha} we get that
\begin{align}\label{4001}
&\log  |L(\tfrac12+\beta+it,\chi_D)|  \geq \frac{2g}{N+1} \log \Big(1-q^{-(N+1)\beta} \Big) \\
&\qquad- \sum_{d(f) \leq N} \frac{\Lambda(f)}{|f|^{1/2+\beta}} \sum_{j=0}^{\infty} \bigg( \frac{(j+1)q^{-j(N+1)\beta}}{d(f)+j(N+1)}-\frac{(j+1)q^{-(j+2)(N+1)\beta} |f|^{2\beta}}{  (j+2)(N+1)-d(f)} \bigg) +O(1).\nonumber
\end{align}
For $d(f)\leq N$ we have
\begin{align}\label{4002}
&\sum_{j=1}^{\infty} \bigg( \frac{(j+1)q^{-j(N+1)\beta}}{d(f)+j(N+1) }-\frac{(j+1)q^{-(j+2)(N+1)\beta} |f|^{2\beta}}{  (j+2)(N+1)-d(f)} \bigg)\nonumber \\
&\qquad=2\big (N+1-d(f)\big)\sum_{j=1}^{\infty} \frac{(j+1)q^{-j(N+1)\beta}}{(d(f)+j(N+1))((j+2)(N+1)-d(f))}\nonumber \\
&\qquad\qquad+ \sum_{j=1}^{\infty} \frac{(j+1) q^{-j(N+1)\beta}(1-q^{-2(N+1)\beta}|f|^{2\beta})}{(j+2)(N+1)-d(f)}\\
&\qquad < \frac{2\big(N+1-d(f)\big)}{(N+1)^2} \sum_{j=1}^{\infty} \frac{q^{-j(N+1)\beta}}{j }+O\Big(\frac{\big(N+1-d(f)\big)\beta}{N} \sum_{j=1}^{\infty} q^{-j(N+1)\beta}\Big)\nonumber\\
&\qquad= - \frac{2\big(N+1-d(f)\big)}{(N+1)^2}\log \Big(1-q^{-(N+1)\beta} \Big) +O\Big(\frac{N+1-d(f)}{N^2} \Big).\nonumber
\end{align}
Putting this into \eqref{4001} we obtain that
\begin{align}
&\log  |L(\tfrac12+\beta+it,\chi_D)|  > \frac{2g}{N+1} \log \Big(1-q^{-(N+1)\beta} \Big)\\
&\qquad-\sum_{d(f) \leq N} \frac{\Lambda(f)}{|f|^{1/2+\beta}} \bigg(\frac{1}{d(f)}-\frac{q^{-2(N+1)\beta} |f|^{2\beta}}{ 2(N+1)-d(f)}- \frac{2\big(N+1-d(f)\big)}{(N+1)^2}\log \Big(1-q^{-(N+1)\beta} \Big)\nonumber  \\
&\qquad\qquad\qquad\qquad\qquad\qquad+O\Big(\frac{N+1-d(f)}{N^2} \Big) \bigg). \label{inl}
\end{align}

Using the Prime Polynomial Theorem in the form
$$\sum_{f \in \mathcal{M}_n} \Lambda(f) = q^n,$$ 
we have that the sum over $f$ becomes
\begin{align}
\sum_{n=1}^Nq^{n(1/2-\beta)}\bigg(\frac{1}{n}-\frac{q^{-2(N+1-n)\beta} }{ 2(N+1)-n}- \frac{2\big(N+1-n\big)}{(N+1)^2}\log \Big(1-q^{-(N+1)\beta} \Big)+O\Big(\frac{N+1-n}{N^2} \Big) \bigg).
\label{sumf}
\end{align}
If $(1/2-\beta) \log g = O(1)$, then we write $q^{n(1/2-\beta)}= 1+O(n/\log g)$, and then the sum over $f$ becomes 
\begin{align*}
\sum_{n=1}^N \frac{1}{n} + O(1) = \log N+ O(1) = \log \log g+O(1),
\end{align*}
where we used the fact that $N+1=2 \log_q g$. This gives \eqref{bd2}.

Now we assume that $(1/2-\beta) \log g \neq O(1)$. 
Using partial summation, we then see that the sum over $f$ is 
\begin{align}
& \frac{q^{(N+1)(1/2-\beta)}}{(q^{1/2-\beta}-1)N}-\frac{q^{(N+1)(1/2-\beta)}}{(q^{1/2+\beta}-1)(N+2)}- O\Big(\frac{q^{(N+1)(1/2-\beta)}}{N^2}\Big)\log \Big(1-q^{-(N+1)\beta} \Big) + O \Big( \frac{q^{(N+1)(1/2-\beta)}}{N^2} \Big)\nonumber \\
&= \frac{q^{(N+1)(1/2-\beta)}(q^{1/2+\beta}-q^{1/2-\beta})}{(q^{1/2-\beta}-1)(q^{1/2+\beta}-1)N}-  O\Big(\frac{q^{(N+1)(1/2-\beta)}}{N^2}\Big)\log \Big(1-q^{-(N+1)\beta} \Big) + O \Big( \frac{q^{(N+1)(1/2-\beta)}}{N^2} \Big) .\label{partial_sum}
\end{align}
Now if $\beta \ll 1/\log g$, then $q^{1/2+\beta}-q^{1/2-\beta}= O(\beta)$. As $N+1= 2 \log_q g$ the first bound \eqref{bd1} now follows.

If $(1/2-\beta)\log g \neq O(1)$ and $\beta \neq O(1\log g)$, then combining \eqref{inl} and \eqref{partial_sum},  \eqref{bd3} follows.

\end{proof}

\begin{remark}
\emph{We will use Lemma \ref{initial} in the following form. For $0<\beta \ll 1/\log g$, we have 
\begin{equation}
\frac{1}{ |L(1/2+\beta+it,\chi_D)|} \leq \Big( \frac{1}{1-g^{-2\beta}} \Big)^{  \frac{ (1+\varepsilon)g }{\log_q g}} ,
\label{ub_first}
\end{equation}
and for $\beta \neq O(1\log g)$ and $(1/2-\beta) \log g \neq O(1)$, we have
\begin{equation}
\frac{1}{ |L(1/2+\beta+it,\chi_D)|} \leq \exp \Bigg( \frac{g^{1-2\beta}}{\log_q g} \Big( 2+ \frac{q^{1/2+\beta}-q^{1/2-\beta}}{(q^{1/2-\beta}-1)(q^{1/2+\beta}-1)} +\varepsilon  \Big) \Bigg).\label{ub_second}
\end{equation}
}
\end{remark}


\begin{proof}[Proof of Theorem \ref{theorem3}]

Let 
$$\beta=\min \{\beta_1,\ldots,\beta_k\}.$$
We assume that $\beta = O(1/\log g)$, which is the more difficult case. We will only sketch the proof when $\beta \neq O(1\log g)$. 

Let $a<2$, $1/2<d \leq 1-\varepsilon$, $r>1$ be constants to be chosen later. 
Let
\begin{equation} 
N_0 =\bigg[- \frac{ (d-1/2)(\log g)^2}{(1+2\epsilon) (\log q) km\log (1-g^{-2\beta} )}  \bigg] , \label{n0} 
\end{equation}
\begin{equation} s_0 =  2\bigg[ \frac{-(1+2\varepsilon) (\log q) km g \log (1-g^{-2\beta} )}{2(d-1/2) (\log g)^2}  \bigg] \quad \ell_0=s_0^d.
\end{equation} 
For $1\leq j \leq K$, let 
\begin{equation*}
N_j =[ r( N_{j-1}+1)], \quad s_j = 2 \bigg[ \frac{ag}{2N_j}  \bigg] \quad \text{and}\quad \ell_j = 2 \bigg[ \frac{s_j^d}{2} \bigg], \label{param}
\end{equation*}
where we choose $K$ so that 
\begin{equation}
N_K = \Big[\frac{ \log_q ( g\beta)}{\beta} \Big].
\label{rk}
\end{equation}
Note that from our choice of parameters, we have $s_0 N_0 \leq g$, 
\begin{equation*}
\sum_{j=0}^K \ell_j N_j \leq 2g, \label{cond1}
\end{equation*}
and for $0 \leq j <K$, we have
\begin{equation*}
\sum_{r=0}^j \ell_r N_r +s_{j+1} N_{j+1} \leq 2g.
\label{cond2}
\end{equation*}

Let $I_0 = (0, N_{0}], I_1=(N_0,N_1], \ldots, I_K=(N_{K-1}, N_K]$.  Let 
\begin{align} \label{ap}
a_{\alpha}(P;N) & = -\cos\big( t d(P) \log q\big)  \\
&\qquad\qquad\times \sum_{j=0}^{\infty}\bigg(\frac{ ( j+1) d(P)q^{- j(N+1)\alpha}}{ d(P) + j(N+1) }-\frac{ (j+1) d(P) q^{ -(j+2)(N+1)\alpha}|P|^{2 \alpha}}{  (j+2)(N+1) - d(P)}\bigg)\nonumber
\end{align}
and 
\begin{equation}
c(P;N) = \sum_{j=1}^k \frac{ a_{\beta_j}(P;N)}{|P|^{\beta_j-\beta}}.
\label{cp}
\end{equation}
We extend $c(P;N)$ to a completely multiplicative function in the first variable. 

For $d(P)\leq N$ we have
$$\big| a_{\beta_j}(P;N) \big| \leq 1+ \frac{1}{q^{(N+1)\beta}-1},$$
if $N\beta_j \gg 1$, and as in \eqref{4002}
\begin{align*}
\big|a_{\beta_j}(P;N)\big|&<  1-\frac{d(P)q^{-2(N+1)\beta_j} |P|^{2\beta_j}}{  2(N+1)-d(P)} - \frac{2\big(N+1-d(P)\big)d(P)}{(N+1)^2}\log \Big(1-q^{-(N+1)\beta_j} \Big) \\
&\qquad\qquad+O\Big(\frac{(N+1-d(P))d(P)}{N^2} \Big)\\
&<-\frac{1}{2}\log \Big(1-q^{-(N+1)\beta_j} \Big)+O(1)\ll \log\frac{1}{\beta}
\end{align*}
if $N\beta_j \ll 1$. It follows from \eqref{cp} that for $d(P) \leq N$ ,
\begin{equation}
c(P;N) \leq k+ \frac{k}{q^{(N+1)\beta}-1}.
\label{bign}
\end{equation} 
if $N\beta \gg 1$, and there exists some constant $A>0$ such that
\begin{equation}
c(P;N) \leq Ak \log \frac{1}{\beta} 
\label{c_bound}
\end{equation}
if $N\beta \ll 1$.

For $0 \leq r \leq K$, let
$$P_{I_r}(D;N_j) = \sum_{d(P) \in I_r} \frac{c(P;N_j)\chi_D(P) }{|P|^{1/2+\beta}}.$$
We will first prove the following lemma.
\begin{lemma}
\label{cases}
Let $k$ be positive. We either have
$$ \max_{0 \leq u \leq K}  \Big|  P_{I_0} (D; N_u) \Big| > \frac{\ell_0}{me^2},$$ or 
\begin{align}\label{ineq2}
& \frac{1}{ \prod_{j=1}^k | L(1/2+\beta_j+it_j,\chi_D)|^m}  \nonumber\\
& \ll \Big(1-q^{-(N_K+1)\beta} \Big)^{-\frac{2gkm}{N_K+1}} (\log g)^{\frac{mk}{2}} \prod_{r=1}^k   \min \Big\{ \frac{1}{\beta_r}, \min \Big\{ N_K, \frac{1}{\overline{t_r}}\Big\}\Big\}^{-m/2} \\
& \times  \prod_{r=0}^K (1+e^{-\ell_r/2}) E_{\ell_r} \Big(m P_{I_r}(D;N_K)\Big)   + \sum_{0 \leq j \leq K-1} \sum_{j < u \leq K} \Big(1-q^{-(N_j+1)\beta} \Big)^{-\frac{2gkm}{N_j+1}} (\log g)^{\frac{mk}{2}} \nonumber \\
& \times  \prod_{r=1}^k   \min \Big\{ \frac{1}{\beta_r}, \min \Big\{ N_j, \frac{1}{\overline{t_r}}\Big\}\Big\}^{-m/2}  \prod_{r=0}^j (1+e^{-\ell_r/2}) E_{\ell_r} \Big( m P_{I_r} (D;N_j)\Big)  \Big(\frac{me^2}{\ell_{j+1}} P_{I_{j+1}} (D;N_u) \Big)^{s_{j+1}}, \nonumber
\end{align}
where $\overline{t} = \min \{ t \bmod {2 \pi}, 2 \pi - (t \bmod {2 \pi})\} .$
\end{lemma}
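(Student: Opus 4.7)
The plan is to apply Lemma \ref{lb} separately to each of the $k$ $L$-functions $L(\tfrac12+\beta_j+it_j,\chi_D)$ with a common parameter $N$, multiply the resulting lower bounds, and then run a Soundararajan--Harper dichotomy on the resulting prime Dirichlet polynomial using the partition $\{I_0,\dots,I_K\}$ of $(0,N_K]$ into intervals of sharply increasing length.

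I will first establish a workhorse inequality valid for any $N\in\{N_0,\dots,N_K\}$. Applying Lemma \ref{lb} to each $|L(\tfrac12+\beta_j+it_j,\chi_D)|$ with parameter $N$, multiplying, and using $\beta=\min_j\beta_j$ to bound the ``constant'' term produces
\[
\frac{1}{\prod_{j=1}^k|L(\tfrac12+\beta_j+it_j,\chi_D)|^m}\leq C\cdot(1-q^{-(N+1)\beta})^{-\tfrac{2gkm}{N+1}}\cdot\exp\!\Big(-m\Re\sum_{d(f)\leq N}\sum_{j=1}^k\tfrac{b_{\beta_j}(d(f))\chi_D(f)\Lambda(f)}{|f|^{1/2+\beta_j+it_j}}\Big).
\]
I then split the sum over $f$ according to whether $f$ is a prime, a prime square, or a higher prime power. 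Comparing \eqref{balpha} with \eqref{ap} and \eqref{cp}, the prime contribution equals $-mP(D;N)$ after absorbing $\log q$ into the definition of $c$. For squares of primes, $\chi_D(P)^2=1$ off $P\mid D$, so the contribution is deterministic, and the prime polynomial theorem together with partial summation against the oscillating factor $\cos(2t_jd(P)\log q)$ shows that it equals $-\tfrac m2\sum_{j=1}^k\log\min\{1/\beta_j,\min\{N,1/\overline{t_j}\}\}+O(1)$, which exponentiates to $\prod_r\min\{1/\beta_r,\min\{N,1/\overline{t_r}\}\}^{-m/2}$. Higher prime powers contribute $O(1)$. Together with a technical $(\log g)^{mk/2}$ buffer, this yields the workhorse bound
\[
\tfrac{1}{\prod_j|L(\tfrac12+\beta_j+it_j,\chi_D)|^m}\ll W(N)\cdot e^{mP(D;N)},
\]
where $W(N)$ is the product of the first three factors (with $N$ in place of $N_j$ or $N_K$) in the bound asserted in the lemma.

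Now for the dichotomy. Set $\mathcal{D}_{-1}:=\{D:\max_{0\le u\le K}|P_{I_0}(D;N_u)|>\ell_0/(me^2)\}$; these are exactly the $D$ for which the ``or'' alternative is asserted, so I may assume $D\notin\mathcal{D}_{-1}$. Let $j=j(D)\in\{0,\dots,K\}$ be the largest integer for which $|P_{I_r}(D;N_u)|\leq\ell_r/(me^2)$ holds for every $0\le r\le j$ and every $0\le u\le K$. If $j=K$, apply the workhorse bound with $N=N_K$, factor $e^{mP(D;N_K)}=\prod_{r=0}^K e^{mP_{I_r}(D;N_K)}$, and invoke \eqref{taylor} on each factor (whose hypothesis $mP_{I_r}(D;N_K)\le\ell_r/e^2$ holds by definition of $j$); this produces the first summand of the claim. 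If $j<K$, the maximality of $j$ forces some $u>j$ with $|P_{I_{j+1}}(D;N_u)|>\ell_{j+1}/(me^2)$, so $(me^2P_{I_{j+1}}(D;N_u)/\ell_{j+1})^{s_{j+1}}\ge 1$ (recall $s_{j+1}$ is even). Apply the workhorse bound with $N=N_j$, apply \eqref{taylor} to each of the intervals $I_0,\dots,I_j$, and multiply by the $\ge1$ factor. Since the specific ``bad'' $u$ is not known a priori, summing over $u\in(j,K]$ gives an upper bound, and summing over $j\in\{0,\dots,K-1\}$ produces the remaining summands.

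The main technical obstacle will be the careful extraction of the factors $\min\{1/\beta_r,\min\{N,1/\overline{t_r}\}\}^{-m/2}$ from the prime-square sum, which requires a case analysis on which of the three scales $1/\beta_r$, $N$, $1/\overline{t_r}$ governs the partial summation against $\cos(2t_jd(P)\log q)$; the remainder of the argument is a clean application of the Soundararajan--Harper pigeonhole to the workhorse bound.
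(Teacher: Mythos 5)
Your proposal follows essentially the same route as the paper's proof: apply Lemma \ref{lb} to each of the $k$ $L$-functions, split the resulting Dirichlet polynomial into primes, prime squares, and higher prime powers, handle the prime squares with the trigonometric sum estimate of Lemma \ref{sumscos} together with the bound $\sum_{P\mid D}1/|P|\le\log\log g+O(1)$, and then run the Soundararajan--Harper dichotomy on the partition $I_0,\dots,I_K$ via the truncated exponential \eqref{taylor}. That is exactly what the paper does.

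There is, however, one genuine gap. You define $j(D)$ as the largest integer for which $|P_{I_r}(D;N_u)|\le\ell_r/(me^2)$ holds for \emph{every} $0\le r\le j$ and \emph{every} $0\le u\le K$. If $j<K$, maximality only yields some $u\in\{0,\dots,K\}$ with $|P_{I_{j+1}}(D;N_u)|>\ell_{j+1}/(me^2)$; nothing forces $u>j$, so the claim ``maximality forces some $u>j$'' does not follow from your definition. This matters: the double sum in the lemma ranges only over $j<u\le K$, and a bad pair with $u\le j$ is not covered. For a concrete scenario, a $D$ could satisfy $|P_{I_1}(D;N_0)|>\ell_1/(me^2)$ while $|P_{I_r}(D;N_u)|\le\ell_r/(me^2)$ for all pairs with $u\ge r$; then your $j(D)=0$, the only bad pair is $(r,u)=(1,0)$ with $u=0\not>j$, and your argument stalls even though the lemma's bound does hold for this $D$ (via the first term, using $N=N_K$). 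The paper sidesteps this by defining
\[
\mathcal{T}_r=\Big\{D:\max_{r\le u\le K}|P_{I_r}(D;N_u)|\le\frac{\ell_r}{me^2}\Big\},
\]
i.e., requiring the bound only for $u\ge r$. That is all one needs when applying \eqref{taylor} at scale $N_j$ (only $u=j\ge r$ is used), and it automatically makes the bad $u$ satisfy $u\ge j+1>j$. Replacing your condition with this one closes the argument. A cosmetic slip: in your workhorse bound the denominator should read $|f|^{1/2+it_j}$ rather than $|f|^{1/2+\beta_j+it_j}$, as in Lemma \ref{lb}; you recover the correct identification $-mP(D;N)$ afterwards, so this does not propagate.
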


\begin{proof}
For $r \leq K$, let
$$\mathcal{T}_r = \Big\{ D \in \mathcal{H}_{2g+1} \, | \,   \max_{r \leq u \leq K} \big| P_{I_r} ( D; N_u) \big|  \leq \frac{\ell_r}{me^2} \Big\}.$$
We have the following possibilities:
\begin{enumerate}
\item $D \not \in \mathcal{T}_0$;
\item $D \in \mathcal{T}_r$ for all $r \leq K$;
\item There exists $0 \leq j \leq K-1$ such that $D \in \mathcal{T}_r$ for all $r \leq j$, and $D \not \in \mathcal{T}_{j+1}$.
\end{enumerate}
The first condition corresponds to the first statement of the lemma.

 If the second condition is satisfied, then we use Lemma \ref{lb} and we pick $N= N_K$. We use the expression \eqref{balpha} for $b_{\beta_j}(m)$, evaluate the contribution from $f=P^2$  and bound the contribution from $f=P^i$ with $i \geq 3$ by $O(1)$ in Lemma \ref{lb}. Also note that using \eqref{balpha}, the second and the fourth terms will be bounded by $O(1)$ when summing over the primes, so we get that  
\begin{align*}
&\sum_{j=1}^k  m \log |L(\tfrac12+\beta_j+it_j,\chi_D)|  \geq \frac{2gkm}{N_K+1} \log \Big(1-q^{-(N_K+1)\beta} \Big) \\
&\qquad\qquad- m\sum_{d(P) \leq N_K} \frac{  c(P;N_K) \chi_D(P)}{|P|^{1/2+\beta}} + \frac{m}{2} \sum_{r=1}^k \sum_{\substack{d(P) \leq N_K/2 \\ P \nmid D}} \frac{ \cos(2t_r d(P) \log q)}{|P|^{1+2\beta_j}} +O(1) .
\end{align*} 
Now using the fact that 
$$\sum_{P|D} \frac{1}{|P|} \leq \log\log g+O(1),$$ and using Lemma \ref{sumscos}, it follows that
\begin{align*}
\sum_{j=1}^k  & m \log |L(\tfrac12+\beta_j+it_j,\chi_D)|  \geq \frac{2gkm}{N_K+1} \log \Big(1-q^{-(N_K+1)\beta} \Big)- m\sum_{d(P) \leq N_K} \frac{  c(P;N_K) \chi_D(P)}{|P|^{1/2+\beta}} \\
&+ \frac{m}{2} \sum_{r=1}^k \log \min \Big\{ \frac{1}{\beta_r}, \min \Big\{ N_K, \frac{1}{\overline{t_r}}\Big\}\Big\}- \frac{mk}{2} \log \log g+O(1).
\end{align*}

 We exponentiate the expression above and use inequality \eqref{taylor}. Since $D \in \mathcal{T}_r$ for all $r \leq K$, we obtain the first term in \eqref{ineq2}.

If the third condition is satisfied, then we pick $N=N_j$ in Lemma \ref{lb}. Since $D \not \in \mathcal{T}_{j+1}$, it follows that there exists some $u \geq j+1$ such that $| P_{I_{j+1}} (D; N_u) | > \frac{\ell_{j+1}}{me^2}$, and since $s_{j+1}$ is even, we have
$$1<  \Big(\frac{me^2}{\ell_{j+1}} P_{I_{j+1}} (D; N_u) \Big)^{s_{j+1}},$$ and we proceed as in the previous case.
\end{proof}

We now return to the proof of Theorem \ref{theorem3}. We use Lemma \ref{cases}. If $D \notin \mathcal{T}_0$, then there exists some $0 \leq u \leq K$ such that 
$$ 1< \Big(  \frac{me^2}{\ell_0} P_{I_0} (D;N_u) \Big)^{s_0},$$ since $s_0$ is even. Then using the Cauchy-Schwarz inequality, we have 
\begin{align}\label{t0}
\sum_{D \notin \mathcal{T}_0} & \prod_{j=1}^k \frac{1}{|L(1/2+\beta_j+it_j,\chi_D)|^m}\nonumber\\
& \leq \sum_{D \in \mathcal{H}_{2g+1}}  \prod_{j=1}^k \frac{1}{|L(1/2+\beta_j+it_j,\chi_D)|^m}  \Big( \frac{me^2}{\ell_0} P_{I_0}(D;N_u) \Big)^{s_0}   \\
& \leq \Big( \frac{me^2}{\ell_0} \Big)^{s_0} \bigg( \sum_{D \in \mathcal{H}_{2g+1}}  \prod_{j=1}^k \frac{1}{|L(1/2+\beta_j+it_j,\chi_D)|^{2m}} \bigg)^{1/2}  \bigg( \sum_{D \in \mathcal{H}_{2g+1}}  P_{I_0} (D; N_u) ^{2 s_0} \bigg)^{1/2}.\nonumber 
\end{align}
For the first term in the inequality above, we use the pointwise bound \eqref{ub_first} for each of the $L$--functions. For the second term, since each of the summands is positive, we bound the sum over $\mathcal{H}_{2g+1}$ by the sum over all $D \in \mathcal{M}_{2g+1}$ and using Lemma \ref{power}, we have the following
\begin{align*}
 \sum_{D \in \mathcal{H}_{2g+1}}  P_{I_0}(D;N_u) ^{2 s_0}& \leq \sum_{D \in \mathcal{M}_{2g+1}}  \bigg( \sum_{d(P) \in I_0} \frac{c(P;N_u)\chi_D(P)}{|P|^{1/2+\beta}} \bigg)^{2 s_0}\\
 & = \sum_{D \in \mathcal{M}_{2g+1}} (2s_0)! \sum_{\substack{P | f \Rightarrow d(P) \in I_0 \\ \Omega(f) = 2 s_0}} \frac{c(f;N_u) \nu(f)\chi_D(f)  }{|f|^{1/2+\beta}}.
\end{align*}
We interchange the sum over $D$ and the sum over $f$ and note that $d(f) \leq 2 s_0 N_0$. If $f \neq \square$, then the sum over $D$ vanishes, since $d(D) = 2g+1 > 2 s_0 N_0$ from our choice of parameters \eqref{n0}. It then follows that
\begin{align*}
 \sum_{D \in \mathcal{H}_{2g+1}} &  P_{I_0}(D;N_u) ^{2 s_0} \leq q^{2g+1} (2 s_0)! \sum_{\substack{P | f \Rightarrow d(P) \in I_0 \\ \Omega(f)=s_0}} \frac{c(f;N_u)^2 \nu(f^2)}{|f|^{1+2 \beta}} .
 \end{align*}
 Now since $N_0 \leq N_u$, using \eqref{c_bound} it follows that for $f$ as above
 \begin{equation*}
 c(f;N_u) \leq A^{\Omega(f)} k^{\Omega(f)} \Big(\log  \frac{1}{\beta} \Big)^{\Omega(f)},
 \end{equation*}
  and hence
 \begin{align}
 \sum_{D \in \mathcal{H}_{2g+1}} &  P_{I_0}(D;N_u)^{2 s_0}  \ll q^{2g+1} (2 s_0)!  \sum_{\substack{P | f \Rightarrow d(P) \in I_0 \\ \Omega(f)=s_0}} \frac{ A^{2 \Omega(f)} k^{2 \Omega(f)} (\log  \frac{1}{\beta} )^{2 \Omega(f)}  \nu(f)}{|f|^{1+2 \beta}} \nonumber\\
 &  = q^{2g+1} \frac{(2 s_0)!}{s_0!} A^{2 s_0} k^{2 s_0} \Big(\log  \frac{1}{\beta} \Big)^{2 s_0} \bigg( \sum_{d(P) \in I_0} \frac{1}{|P|^{1+2 \beta}} \bigg)^{s_0}  \nonumber \\
 &   \ll q^{2g+1} \frac{(2 s_0)!}{s_0!} A^{2 s_0} k^{2 s_0} \Big( \log \frac{1}{\beta} \Big)^{2 s_0} (\log N_0)^{s_0}. \label{sump}
\end{align}
Combining equations \eqref{t0} and \eqref{sump}, Stirling's formula and keeping in mind the choice of parameters \eqref{n0}, \eqref{s0}, it follows that
\begin{align*}
&\sum_{D \notin \mathcal{T}_0}   \prod_{j=1}^k  \frac{1}{|L(1/2+\beta_j+it_j,\chi_D)|^m} \\
&\quad \ll q^{2g} \Big(\frac{Akm e^2 \log \frac{1}{\beta}}{\ell_0} \Big)^{s_0} \Big( \frac{1}{1-g^{-2\beta}} \Big)^{\frac{ (1+\varepsilon) kmg}{\log_q g}} \sqrt{ \frac{ (2 s_0)!}{s_0!}}    (\log N_0)^{s_0/2}\\
& \quad  \ll q^{2g}  \Big( \frac{1}{1-g^{-2\beta}} \Big)^{\frac{ (1+\varepsilon) kmg}{\log_q g}} \exp \Big(-\frac{s_0 \log s_0}{2}  \Big)\exp \Big(s_0 \log \Big( 2e^{3/2}  A  km\Big( \log  \frac{1}{\beta} \Big) \sqrt{ \log N_0}  \Big) \Big).
 \end{align*}
Since $\log\frac{1}{\beta} \ll \log g$, we have
\begin{align}
\sum_{D \notin \mathcal{T}_0}  & \prod_{j=1}^k  \frac{1}{|L(1/2+\beta_j+it_j,\chi_D)|^m}  = o(q^{2g}). \label{t0'}
\end{align}

Now assume that $D \in \mathcal{T}_0$ and that \eqref{ineq2} holds. Incorporating the terms of the form $\prod_{r=0}^j (1+e^{-\ell_r/2})$ into the error term, we then have that
\begin{align}\label{eq3}
& \sum_{D \in \mathcal{T}_0} \prod_{j=1}^k \frac{1}{ |L(1/2+\beta_j+it_j,\chi_D)|^m}\nonumber\\
&\qquad \ll  \Big(1-q^{-(N_K+1)\beta} \Big)^{-\frac{2gkm}{N_K+1}} (\log g)^{\frac{mk}{2}} \prod_{r=1}^k  \min \Big\{ \frac{1}{\beta_r}, \min \Big\{ N_K, \frac{1}{\overline{t_r}}\Big\}\Big\}^{-m/2} \\
& \times \sum_{D \in \mathcal{M}_{2g+1}} \prod_{r=0}^K  E_{\ell_r} \Big(m P_{I_r} (D;N_K) \Big) + \sum_{0 \leq j \leq K-1} (\log g)^{\frac{mk}{2}} \prod_{r=1}^k  \min \Big\{ \frac{1}{\beta_r}, \min \Big\{ N_j, \frac{1}{\overline{t_r}}\Big\}\Big\}^{-m/2} \nonumber \\
& \times  \sum_{j < u \leq K}  \Big(1-q^{-(N_j+1)\beta} \Big)^{-\frac{2gkm}{N_j+1}}  \sum_{D \in \mathcal{M}_{2g+1}}  \prod_{r=0}^j  E_{\ell_r} \Big( m P_{I_r} (D;N_j) \Big)  \Big(\frac{me^2}{\ell_{j+1}} P_{I_{j+1}} (D;N_u) \Big)^{s_{j+1}}. \nonumber
\end{align} 

Now we focus on the second term in \eqref{eq3}. We have
\begin{align} \label{tb}
&\sum_{D \in \mathcal{M}_{2g+1}}  \prod_{r=0}^j  E_{\ell_r} \Big( m P_{I_r} (D;N_j) \Big)  \Big(\frac{me^2}{\ell_{j+1}} P_{I_{j+1}} (D;N_u) \Big)^{s_{j+1}}  \nonumber \\
&\qquad\qquad= \Big( \frac{me^2}{\ell_{j+1}}\Big)^{s_{j+1}} (s_{j+1})! \sum_{D \in \mathcal{M}_{2g+1}}\prod_{r=0}^j \bigg( \sum_{\substack{P | f_r \Rightarrow d(P) \in I_r \\ \Omega(f_r) \leq \ell_r}} \frac{m^{\Omega(f_r)}  c(f_r;N_j) \nu(f_r)\chi_D(f_r)}{|f_r|^{1/2+\beta}} \bigg)\nonumber\\
&\qquad\qquad\qquad\qquad\times \bigg( \sum_{\substack{P|f_{j+1} \Rightarrow d(P) \in I_{j+1} \\ \Omega(f_{j+1})=s_{j+1}}} \frac{ c(f_{j+1};N_u) \nu(f_{j+1})\chi_D(f_{j+1})}{|f_{j+1}|^{1/2+\beta}} \bigg).
\end{align}
Interchanging the sum over $D$ with the sums over the $f_i$, note that if $f_0 \cdot \ldots \cdot f_{j+1} \neq \square$, then the sum over $D$ vanishes since $2g+1 > \sum_{r=0}^{j} \ell_r N_r +s_{j+1} N_{j+1} \geq d(f_0 \cdot \ldots \cdot f_{j+1})$. We are then only left with the diagonal terms corresponding to $f_0 \cdot \ldots \cdot f_{j+1} = \square$ in the equation above. Since the $f_r$ are pairwise coprime, it follows that their product is a square if and only if each $f_r = \square$ for $r \leq j+1$. Note that for $r \leq j$ and $f_r$ as above, from \eqref{c_bound}, we have 
\begin{equation*}
c(f_r;N_j ) \leq A^{\Omega(f_r)} k^{\Omega(f_r)}  \Big( \log\frac{1}{\beta} \Big)^{\Omega(f_r)},
\end{equation*}
 and for $f_{j+1}$ we similarly have 
 $$c(f_{j+1};N_u) \leq A^{\Omega(f_{j+1})} k^{\Omega(f_{j+1}})  \Big(\log \frac{1}{\beta} \Big)^{\Omega(f_{j+1})}.$$ Bounding  $\nu(f_r^2) \leq \nu(f_r) / 2^{\Omega(f_r)} \leq 1/2^{\Omega(f_r)}$ and $\nu(f_{j+1}^2) \leq \nu(f_{j+1})$, we then get that
\begin{align*}
\eqref{tb} & \ll q^{2g} \Big( \frac{me^2}{\ell_{j+1}} \Big)^{s_{j+1}} (s_{j+1})!   \prod_{r=0}^j \Big( \sum_{\substack{P | f_r \Rightarrow d(P) \in I_r \\ \Omega(f_r) \leq \ell_r/2}} \frac{A^{2 \Omega(f_r)} k^{2 \Omega(f_r)}m^{2 \Omega(f_r)} ( \log \frac{1}{\beta})^{2 \Omega(f_r)}}{2^{\Omega(f_r)} |f_r|^{1+2\beta}} \Big) \\
&\qquad\qquad \times \bigg( \sum_{\substack{P | f_{j+1} \Rightarrow d(P) \in I_{j+1} \\ \Omega(f_{j+1}) = s_{j+1}/2}} \frac{ A^{2 \Omega(f_{j+1})} k^{2 \Omega(f_{j+1})} (\log  \frac{1}{\beta})^{2 \Omega(f_{j+1})}  \nu(f_{j+1})}{|f_{j+1}|^{1+2\beta}} \bigg) \\
& \ll q^{2g}  \Big( \frac{me^2}{\ell_{j+1}} \Big)^{s_{j+1}} (s_{j+1})!  \prod_{r=0}^j  \bigg( \prod_{d(P) \in I_r} \bigg(1- \frac{  A^2k^2m^2 (\log\frac{1}{\beta} )^2}{2 |P|^{1+2 \beta}} \bigg)^{-1} \bigg) \\
&\qquad\qquad \times  \frac{1}{ (s_{j+1}/2)!} A^{s_{j+1}} k^{s_{j+1}} \Big( \log \frac{1}{\beta} \Big)^{s_{j+1}}\bigg( \sum_{d(P) \in I_{j+1}} \frac{1}{|P|^{1+2 \beta}} \bigg)^{s_{j+1}/2} \\
& \ll  q^{2g}  \Big( \frac{Akme^2 \log  \frac{1}{\beta} }{\ell_{j+1}} \Big)^{s_{j+1}}  \frac{ (s_{j+1})!}{ (s_{j+1}/2)!}   N_j^{A^2k^2m^2(\log  \frac{1}{\beta})^2/2},
\end{align*}
where in the last line we used Lemma $3.6$ in \cite{BF} and the Prime Polynomial Theorem. 

We proceed similarly for the first term in \eqref{eq3}, but use equation \eqref{bign} instead of \eqref{c_bound} since $N_K\beta \gg 1$, and putting things together, we get that
\begin{align*}
 \sum_{D \in \mathcal{T}_0} \prod_{j=1}^k & \frac{1}{| L(1/2+\beta_j+it_j,\chi_D)|^m}  \ll  q^{2g} \Big(1-q^{-(N_K+1)\beta} \Big)^{-\frac{2gkm}{N_K+1}}  N_K^{k^2m^2/2} (\log g)^{\frac{km}{2}} \\
& \times  \prod_{r=1}^k \min \Big\{ \frac{1}{\beta_r}, \min \Big\{ N_K, \frac{1}{\overline{t_r}}\Big\}\Big\}^{-m/2} \exp \Big( \frac{B \log g}{g^{1-\frac{1}{2km}}} \Big) \\
 &+ q^{2g} (\log g)^{\frac{km}{2}}  \sum_{0 \leq j \leq K-1} \sum_{j<u \leq K} \Big(1-q^{-(N_j+1)\beta} \Big)^{-\frac{2gkm}{N_j+1}}  \Big( \frac{Akme^2 \log \frac{1}{\beta} }{\ell_{j+1}} \Big)^{s_{j+1}}  \frac{ (s_{j+1})!}{ (s_{j+1}/2)!}\\
 & \times  N_j^{A^2k^2m^2(\log \frac{1}{\beta} )^2/2} \prod_{r=1}^k \min \Big\{ \frac{1}{\beta_r}, \min \Big\{ N_j, \frac{1}{\overline{t_r}}\Big\}\Big\}^{-m/2},
\end{align*}
for some constant $B>0$, where we used the fact that $\log N_k \ll \log g$ and the fact that $\beta >g^{-\frac{1}{2km}}$.

Let $S_1$ denote the first term above and $S_2$ the second. We first focus on $S_2$. With the choice of our parameters, using Stirling's formula we get that
\begin{align*}
& S_2 \ll q^{2g}  (\log g)^{\frac{km}{2}}  \sum_{0 \leq j \leq K-1} (K-j) \Big( 1-q^{-(N_j+1)\beta} \Big)^{-\frac{2gkm}{N_j+1}}  \\
& \times  \exp \Big(s_{j+1} \log \Big( \frac{2^{1/2} e^{3/2} A km \sqrt{s_{j+1}} \log \frac{1}{\beta}} {\ell_{j+1}} \Big) \Big) N_j^{A^2k^2m^2(\log \frac{1}{\beta} )^2/2 } \prod_{r=1}^k \min \Big\{ \frac{1}{\beta_r}, \min \Big\{ N_j, \frac{1}{\overline{t_r}}\Big\}\Big\}^{-m/2} \\
&\ll q^{2g}  (\log g)^{\frac{km}{2}} \sum_{0 \leq j \leq K-1} (K-j) \exp \Big( - \frac{2gkm}{N_j+1} \log \Big(1-q^{-(N_j+1)\beta} \Big) \Big) \exp \Big( \Big(\frac{1}{2}-d\Big)  s_{j+1} \log s_{j+1}   \Big)\\
& \times  \exp \Big( s_{j+1} \log \Big(2^{1/2} e^{3/2}A k m\log \frac{1}{\beta} \Big) \Big)  N_j^{A^2k^2m^2(\log \frac{1}{\beta} )^2/2} \prod_{r=1}^k \min \Big\{ \frac{1}{\beta_r}, \min \Big\{ N_j, \frac{1}{\overline{t_r}}\Big\}\Big\}^{-m/2}.
\end{align*}

If $(N_j+1)\beta \log q>\varepsilon$, then from our choice \eqref{rk}, it follows that
 $$ \frac{a(d-1/2)}{r} \Big(\log a+ \log g  - \log r- \log N_K  \Big)> 2km \log  \frac{1}{1-e^{-\varepsilon}} >2km \log  \frac{1}{1-q^{-(N_j+1)\beta}} .$$ Hence the sum over those $j$ with $(N_j+1)\beta \log q>\varepsilon$ will be $O(1)$.
 
If $(N_j+1)\beta \log q <\varepsilon$, then we have
$$ \frac{1}{1-q^{-(N_j+1)\beta}}< \frac{2}{(N_j+1)\beta\log q}.$$ So if 
\begin{equation}
 \frac{a (d-1/2)}{r} \Big( \log a+\log g -\log r -\log (N_j+1)\Big)>2km \Big(\log 2-\log (N_j+1)+ \log\frac{1}{\beta}  - \log \log q \Big),
 \label{cond3}
 \end{equation} then the sum over those $j$ with $(N_j+1)\beta \log q<\varepsilon$ will also be $O(1)$.

 Note that condition \eqref{cond3} follows from
$$ \frac{a(d-1/2)}{r} \log g >2 km \log  \frac{1}{\beta} .$$
Recall that $a<2$, $1/2<d<1$ and $1<r$ so we need
\begin{equation}\log \frac{1}{\beta} < \frac{\log g}{2 km}. \label{condition}
\end{equation}

We now choose the parameters so that 
$$ \frac{a(d-1/2) \log g}{2kmr \log \frac{1}{\beta}}=1+\varepsilon.$$
We let $\varepsilon'>0$ be such that
$$ d = \frac{1}{2} + \frac{ (1+\varepsilon)km \log  \frac{1}{\beta} }{\log g}+\varepsilon'<1,$$
and then
$$ r = 1+ \frac{\varepsilon ' \log g}{ 2(1+\varepsilon)km\log \frac{1}{\beta} }\qquad
a= \frac{ 2(1+\varepsilon)km  \log \frac{1}{\beta} +\varepsilon' \log g}{(1+\varepsilon)km  \log \frac{1}{\beta} +\varepsilon' \log g}.$$
With these choices of parameters, we get that
\begin{equation}
S_2 = o(q^{2g}). 
\label{s2bd}
\end{equation}

Now we focus on bounding $S_1$. Recall that $N_K= [\log_q (g\beta)/\beta]$.
Then 
$$\exp \Big(- \frac{2gkm}{N_K+1} \log \Big(1-q^{-(N_K+1)\beta} \Big) \Big)=\exp(o(1)).$$ We hence get that
\begin{align*}
S_1  & \ll q^{2g}  (\log g)^{\frac{km}{2}} \Big( \frac{\log_q (g\beta)}{\beta} \Big)^{k^2m^2/2} \prod_{r=1}^k \min \Big\{ \frac{1}{\beta_r}, \min \Big\{ \frac{ \log_q(g \beta)}{\beta}, \frac{1}{\overline{t_r}}\Big\}\Big\}^{-m/2}\\
& \ll q^{2g}  (\log g)^{\frac{km(km+1)}{2}} \Big( \frac{1}{\beta} \Big)^{k^2m^2/2} \prod_{r=1}^k \min \Big\{ \frac{1}{\beta_r}, \frac{1}{\overline{t_r}}\Big\}^{-m/2},
\end{align*} and from \eqref{condition}, the bound above holds for 
$$\beta \gg g^{-\frac{1}{2km}+\varepsilon}.$$ 

Now assume that $0<\beta<1/2$ such that $\beta \neq O(1\log g)$ and $(1/2-\beta) \log g \neq O(1)$. Then we choose
\begin{equation*}
N_0 = [\sqrt{\log g}] ,\, \,  s_0 = \ell_0 = 2 \Big[ \frac{2 g^{1-2\beta}}{(1-2 \beta) \log_q g} \Big(2+ \frac{q^{1/2+\beta}-q^{1/2-\beta}}{(q^{1/2-\beta}-1)(q^{1/2+\beta}-1)} \Big)\Big]. 
\end{equation*}
For $1 \leq j \leq K$, let
$$N_j = [e( N_{j-1}+1)] , \, \, s_j = 2 \Big[ \frac{g}{2N_j} \Big], \, \, \ell_j = 2 \Big[ \frac{s_j^{3/4}}{2} \Big],$$
where again we choose $K$ such that
$$N_K = \Big[\frac{ \log_q (g \beta)}{\beta} \Big].$$
Then with this choice of parameters, the same proof as in the case $\beta \ll 1/\log g$ goes through, where in equation \eqref{t0} we use the pointwise bound \eqref{ub_second} for the $L$--function instead of \eqref{ub_first}.  

If $(1/2-\beta) \log g =O(1)$, then we choose
$$ N_0 = [\sqrt{\log g}] ,\, \,  s_0 = \ell_0 = 2 [(\log g)^{km}].$$
For $1 \leq j \leq K$, we let
$$ N_j = [e( N_{j-1}+1)] , \, \, s_j = 2 \Big[ \frac{g}{2N_j} \Big], \, \, \ell_j = 2 \Big[ \frac{s_j^{3/4}}{2} \Big],$$
and again we choose $K$ such that
$$N_K = \Big[\frac{ \log_q (g \beta)}{\beta}\Big].$$
Then the same proof as before goes through, where in equation \eqref{t0} we use the pointwise bound  \eqref{bd2}  for the $L$--function instead of \eqref{ub_first}.  

\end{proof}

\section{Computing the $1$-level density of zeros}
\label{density_comp}
Here, we will compute the $1$-level density of zeros, defined in \eqref{density}.

 For $u=q^{-s}$, let
$$f(u) = \Phi\Big( - i(s-\tfrac{1}{2}) \frac{ g \log q}{\pi} \Big).$$
Using the expression \eqref{zeros} for the $L$--function and Cauchy's residue theorem, we have that
\begin{align}
\Sigma(\Phi,g) & = \frac{1}{2 \pi i} \oint_{|u|=q^{\alpha-1/2}} \frac{1}{|\mathcal{H}_{2g+1}|} \sum_{D \in \mathcal{H}_{2g+1}}\frac{\mathcal{L}'(u,\chi_D)}{\mathcal{L}(u,\chi_D)}  f(u) \, du  \nonumber \\
&- \frac{1}{2 \pi i} \oint_{|u|=q^{-\alpha-1/2}}  \frac{1}{|\mathcal{H}_{2g+1}|} \sum_{D \in \mathcal{H}_{2g+1}} \frac{\mathcal{L}'}{\mathcal{L}} (u,\chi_D) f(u) \, du,
\label{cauchy}
\end{align}
for $\alpha>0$. We pick $g^{-1/2+\varepsilon} \ll \alpha < 1/2$.

Now let 
\begin{equation}
\mathcal{A}(u,v) = \prod_P \Big(1- (uv)^{d(P)} \Big)^{-1} \Big(1- \frac{u^{2d(P)}}{|P|+1} - \frac{|P| (uv)^{d(P)}}{|P|+1} \Big).
\label{auv}
\end{equation}
We rewrite Theorem \ref{theorem1} in the case of one $L$-function over one $L$-function as follows. For $ |v| \leq q^{-1/2-c_1/g^{1/2-\varepsilon}}$ for some constant $c_1$ and $|u|<q^{-1}$, we have
\begin{align*}
\frac{1}{|\mathcal{H}_{2g+1}|} \sum_{D \in \mathcal{H}_{2g+1}} \frac{\mathcal{L}(u,\chi_D)}{\mathcal{L}(v,\chi_D)} &= \frac{\mathcal{Z}(u^2)}{\mathcal{Z}(uv)} \mathcal{A}(u,v) + (u \sqrt{q})^{2g} \frac{ \mathcal{Z} \big( \frac{1}{q^2u^2} \big)}{ \mathcal{Z} \big(  \frac{v}{qu}\big)} \mathcal{A} \Big( \frac{1}{qu},v \Big) \\
&+ O \Big( \big( |v| \sqrt{q} \big)^{g(5+4 \log_q |u|-\varepsilon)}  \Big).
\end{align*}
Differentiating the above with respect to $u$ and setting $v=u$, we get the following corollary.
\begin{corollary}
\label{cor}
For $|u| \leq q^{-1/2-c_1/g^{1/2-\varepsilon}}$ for some constant $c_1$, we have
\begin{align*}
\frac{1}{|\mathcal{H}_{2g+1}|} \sum_{D \in \mathcal{H}_{2g+1}} \frac{ \mathcal{L}'(u,\chi_D)}{\mathcal{L}(u,\chi_D)} & = u \frac{ \mathcal{Z}'(u^2)}{\mathcal{Z}(u^2)}+ \frac{d}{du} \mathcal{A}(u,v)_{v=u} +\frac{1}{u} (u \sqrt{q})^{2g} \mathcal{Z} \Big(  \frac{1}{q^2u^2}\Big) \mathcal{A} \Big( \frac{1}{qu},u \Big)\\
&+ O \Big( \big( |u| \sqrt{q} \big)^{g(5+4 \log_q |u|-\varepsilon)}  \Big).
\end{align*}
\end{corollary}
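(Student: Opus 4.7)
The strategy is to differentiate both sides of the asymptotic formula preceding Corollary~\ref{cor} with respect to $u$ and then specialize to $v=u$. On the left-hand side, $\partial_u[\mathcal{L}(u,\chi_D)/\mathcal{L}(v,\chi_D)] = \mathcal{L}'(u,\chi_D)/\mathcal{L}(v,\chi_D)$, which at $v=u$ is exactly the logarithmic derivative we want. The key algebraic observation is that $\mathcal{A}(u,u)=1$: from \eqref{auv}, each Euler factor at $v=u$ simplifies as $(1-u^{2d(P)})^{-1}(1-u^{2d(P)})=1$.

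For the first term $T_1(u,v):=\mathcal{Z}(u^2)\mathcal{A}(u,v)/\mathcal{Z}(uv)$, applying $\partial_u$ produces three contributions, and specializing $v=u$ collapses $\mathcal{A}\mapsto 1$ while the two zeta-factor derivatives combine, via $2u\mathcal{Z}'(u^2)/\mathcal{Z}(u^2)-u\mathcal{Z}'(u^2)/\mathcal{Z}(u^2)$, to the single piece $u\mathcal{Z}'(u^2)/\mathcal{Z}(u^2)$; the third contribution is exactly $\frac{d}{du}\mathcal{A}(u,v)|_{v=u}$. These are the first two terms in the stated expression.

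The subtle step is the second term $T_2(u,v):=(u\sqrt{q})^{2g}\mathcal{Z}(1/(q^2u^2))\mathcal{A}(1/(qu),v)/\mathcal{Z}(v/(qu))$. At $v=u$ the denominator $\mathcal{Z}(1/q)=\infty$, so $T_2$ itself vanishes; moreover, any summand arising from $\partial_u$ that differentiates a factor \emph{other} than $\mathcal{Z}(v/(qu))$ still carries the factor $1/\mathcal{Z}(1/q)=0$. The only surviving contribution comes from $\partial_u[\mathcal{Z}(v/(qu))^{-1}] = (v/(qu^2))\,\mathcal{Z}'(v/(qu))/\mathcal{Z}(v/(qu))^2$; invoking the algebraic identity $\mathcal{Z}'(x)/\mathcal{Z}(x)^2=q$ (from $\mathcal{Z}(x)=(1-qx)^{-1}$) and setting $v=u$ collapses the coefficient to $1/u$, producing exactly $u^{-1}(u\sqrt{q})^{2g}\mathcal{Z}(1/(q^2u^2))\mathcal{A}(1/(qu),u)$.

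The main obstacle—more a matter of care than depth—is justifying the termwise differentiation of the error $O((|v|\sqrt{q})^{g(5+4\log_q|u|-\varepsilon)})$. I would use Cauchy's integral formula on a circle in $u$ of radius comparable to $|u|$: since the left-hand side is analytic in $u$ and the error is uniform on a slightly enlarged region, differentiation loses at most a polynomial factor of order $|u|^{-1}$, which the power savings in the exponent absorb into the $\varepsilon$. Combining the three pieces above then yields the corollary.
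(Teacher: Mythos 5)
Your proposal is correct and is exactly the route the paper takes: the paper's proof is the single line "Differentiating the above with respect to $u$ and setting $v=u$," and you have supplied the supporting details, notably the identity $\mathcal{A}(u,u)=1$, the cancellation $2u\mathcal{Z}'(u^2)/\mathcal{Z}(u^2)-u\mathcal{Z}'(u^2)/\mathcal{Z}(u^2)=u\mathcal{Z}'(u^2)/\mathcal{Z}(u^2)$, the vanishing of $1/\mathcal{Z}(1/q)$ which isolates the surviving piece of $\partial_u T_2$, and the Cauchy-estimate justification for differentiating the error term.
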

Now in equation \eqref{cauchy}, for the first integral, we make the change of variables $u \mapsto \frac{1}{qu}$, use the fact that $f(\tfrac{1}{qu})=f(u)$ and get that
\begin{equation*}
\frac{1}{2 \pi i} \oint_{|u|=q^{\alpha-1/2}} \frac{\mathcal{L}'(u,\chi_D)}{\mathcal{L}(u,\chi_D)}  f(u) \, du = \frac{1}{2 \pi i} \oint_{|u|=q^{-1/2-\alpha}} \frac{\mathcal{L}'( \frac{1}{qu},\chi_D)}{\mathcal{L}(\frac{1}{qu},\chi_D)}  \frac{du}{qu^2}.
\end{equation*}
Now from the functional equation of the $L$--functions, we have that
\begin{equation}
\frac{1}{qu^2} \frac{\mathcal{L}'(\frac{1}{qu},\chi_D)}{\mathcal{L}( \frac{1}{qu},\chi_D)} = - \frac{ \mathcal{L}'(u,\chi_D)}{\mathcal{L}(u,\chi_D)}+ \frac{2g}{u},
\nonumber
\end{equation}
so combining the two equations above, it follows that
\begin{align*}
\frac{1}{2 \pi i} \oint_{|u|=q^{\alpha-1/2}}  \frac{\mathcal{L}'(u,\chi_D)}{\mathcal{L}(u,\chi_D)}  f(u) \, du &= -  \frac{1}{2 \pi i} \oint_{|u|=q^{-1/2-\alpha}}  \frac{\mathcal{L}'(u,\chi_D)}{\mathcal{L}(u,\chi_D)}  f(u) \, du\\
&+ \frac{2g}{2 \pi i} \oint_{|u|=q^{-1/2-\alpha}}  \frac{ f(u)}{u}  \, du.
\end{align*}
Combining the equation above with \eqref{density}, it follows that
\begin{align}
\Sigma(\Phi,g) = -\frac{1}{\pi i}  \oint_{|u|=q^{-1/2-\alpha}}  \frac{1}{|\mathcal{H}_{2g+1}|} \sum_{D \in \mathcal{H}_{2g+1}} \frac{\mathcal{L}'(u,\chi_D)}{\mathcal{L}(u,\chi_D)}  f(u) \, du + \frac{2g}{2 \pi i} \oint_{|u|=q^{-1/2-\alpha}}  \frac{ f(u)}{u}.
\label{density2}
\end{align}
Now note that we have
\begin{equation}
f(u) = \frac{1}{2g}  \sum_{|n| \leq N} \widehat{\Phi} \Big( \frac{n}{2g} \Big) \frac{1}{q^{n/2} u^n}.
\label{fu}
\end{equation}
Using \eqref{fu}, we easily see that
\begin{equation}
\frac{2g}{2 \pi i} \oint_{|u|=q^{-1/2-\alpha}}  \frac{ f(u)}{u} =   \widehat{\Phi}(0).\label{phi0}
\end{equation}
Now for the sum over $D$ in \eqref{density2} we use Corollary \ref{cor}. Note that $|f(u)| \ll q^{N \alpha}$ from equation \eqref{fu}. Using \eqref{phi0}, it follows that
\begin{align}
\Sigma(\Phi,g) &= \widehat{\Phi}(0)- \frac{1}{ \pi i} \oint_{|u|=q^{-1/2-\alpha}} \Big(u \frac{ \mathcal{Z}'(u^2)}{\mathcal{Z}(u^2)}+ \frac{d}{du} \mathcal{A}(u,v)_{v=u}  \nonumber \\
& +\frac{1}{u} (u \sqrt{q})^{2g} \mathcal{Z} \Big(  \frac{1}{q^2u^2}\Big) \mathcal{A} \Big( \frac{1}{qu},u \Big) f(u) \, du + O \Big( q^{-\alpha g (3+2 \alpha-\varepsilon)} \Big) \nonumber \\
&= \widehat{\Phi}(0) +A_1+A_2+A_3 + O \Big( q^{N \alpha-\alpha g (3+2 \alpha-\varepsilon)} \Big),\label{density3}
\end{align}
where $A_1,A_2,A_3$ are obtained by computing the three integrals above. Using \eqref{fu}, we have  
\begin{align*}
 \frac{1}{2 \pi i}  \oint_{|u|=q^{-1/2-\alpha}} u \frac{ \mathcal{Z}'(u^2)}{\mathcal{Z}(u^2)} f(u) \, du  &=  \frac{1}{2 \pi i}  \oint_{|u|=q^{-1/2-\alpha}} \frac{qu}{1-qu^2} \sum_{|n| \leq N} \frac{1}{2g} \widehat{\Phi} \Big( \frac{n}{2g} \Big) \frac{1}{q^{n/2} u^n} \, du \\
 &= \sum_{n=1}^{N/2} \frac{1}{2g}   \widehat{\Phi} \Big( \frac{n}{g} \Big),
 \end{align*}
 so 
 \begin{equation}
 A_1 = - \frac{1}{g}  \sum_{n=1}^{N/2}   \widehat{\Phi} \Big( \frac{n}{g} \Big). \label{a1}
 \end{equation}
From the expression \eqref{auv}, it follows that
\begin{equation*}
\frac{d}{du} \mathcal{A} (u,v)_{v=u} = -\sum_P \frac{ d(P) u^{2d(P)-1}}{(|P|+1)(1-u^{2 d(P)})},
\end{equation*}
so
\begin{align*}
\frac{1}{2 \pi i}&  \oint_{|u|=q^{-1/2-\alpha}} \frac{d}{du} \mathcal{A}(u,v)_{v=u}  f(u) \, du = -\frac{1}{2 \pi i}  \oint_{|u|=q^{-1/2-\alpha}} \sum_P \frac{ d(P) u^{2d(P)-1}}{(|P|+1)(1-u^{2 d(P)})}\\
& \times \sum_{|n| \leq N} \frac{1}{2g} \widehat{\Phi} \Big( \frac{n}{2g} \Big) \frac{1}{q^{n/2} u^n} \, du = -\frac{1}{2g} \sum_{n=1}^{N/2} \widehat{\Phi} \Big( \frac{n}{g}  \Big) \frac{1}{q^n} \sum_{d(P)|n} \frac{d(P)}{|P|+1}.
\end{align*}
Then
\begin{equation}
A_2 = \frac{1}{g} \sum_{n=1}^{N/2} \widehat{\Phi} \Big( \frac{n}{g}  \Big) \frac{1}{q^n} \sum_{d(P)|n} \frac{d(P)}{|P|+1}. 
\label{a2}
\end{equation}
Finally, note that
$$ \mathcal{A} \Big( \frac{1}{qu},u  \Big)= \frac{\zeta_q(2)}{\zeta_q( \frac{1}{q^3u^2}} = \frac{1- \frac{1}{q^2u^2}}{1-q^{-1}},$$ so
\begin{align*}
\frac{1}{2 \pi i} \oint_{|u|=q^{-1/2-\alpha}} &\frac{1}{u} (u \sqrt{q})^{2g} \mathcal{Z} \Big(  \frac{1}{q^2u^2}\Big) \mathcal{A} \Big( \frac{1}{qu},u \Big) f(u) \, du = \frac{1}{q-1} \frac{1}{2 \pi i} \oint_{|u|=q^{-1/2-\alpha}}  \frac{1}{u} (u\sqrt{q})^{2g} \\
& \times  \frac{q^2u^2-1}{qu^2-1} \sum_{|n| \leq N} \frac{1}{2g} \widehat{\Phi} \Big( \frac{n}{2g} \Big) \frac{1}{q^{n/2} u^n} \, du.
\end{align*}
A standard computation gives that 
\begin{align*}
\frac{1}{2 \pi i} \oint_{|u|=q^{-1/2-\alpha}} &\frac{1}{u} (u \sqrt{q})^{2g} \mathcal{Z} \Big(  \frac{1}{q^2u^2}\Big) \mathcal{A} \Big( \frac{1}{qu},u \Big) f(u) \, du  = \frac{1}{q-1} \frac{ \widehat{\Phi}(1)}{2g} - \frac{1}{2g} \sum_{n=g+1}^{N/2} \widehat{\Phi} \Big( \frac{n}{g} \Big),
\end{align*}
so 
\begin{equation}\label{a3}
A_3= - \frac{1}{q-1} \frac{ \widehat{\Phi}(1)}{g} + \frac{1}{g} \sum_{n=g+1}^{N/2} \widehat{\Phi} \Big( \frac{n}{g} \Big).
\end{equation}
Combining equations \eqref{density3}, \eqref{a1}, \eqref{a2}, \eqref{a3}, it then follows that
\begin{align}
\Sigma(\Phi,g) &= \widehat{\Phi}(0) - \frac{1}{g} \sum_{n=1}^g \widehat{\Phi} \Big(  \frac{n}{g} \Big)- \frac{\widehat{\Phi}(1)}{g(q-1)} + \frac{1}{g} \sum_{n=1}^{N/2} \widehat{\Phi} \Big( \frac{n}{g}  \Big) \frac{1}{q^n} \sum_{d(P)|n} \frac{d(P)}{|P|+1}  \nonumber \\
&+ O(q^{N \alpha-\alpha g (3+2\alpha-\varepsilon)}), \label{formula_density}
\end{align}
where 
 $g^{-1/2-\varepsilon} \ll \alpha <1/2$. We pick $\alpha=1/2-\varepsilon$, and then the error term above becomes $q^{N/2-2g+\varepsilon g}$. Hence, we obtain an asymptotic formula for the $1$--level density when $N<4g$. 
 
\kommentar{ \section{Applications to non-vanishing}
\label{non-vanishing}
Here, we will prove Corollary \ref{nonvanish_cor}.
Theorem \ref{theorem_density} yields
$$ \frac{1}{| \mathcal{H}_{2g+1}|} \sum_{D \in \mathcal{H}_{2g+1}} \sum_{j=1}^{2g} \Phi(2g \theta_{j,D}) = \int_{-\infty}^{\infty} \widehat{\Phi}(y) \widehat{W}_{Sp(2g)}(y) \, dy + o(1),$$ where
$$\widehat{W}_{Sp(2g)}(y) = \delta_0(y) - \frac{1}{2} \chi_{[-1.1]}(y).$$
Let
$$p_0(g) = \frac{1}{|\mathcal{H}_{2g+1}|} \Big| \Big\{ D \in \mathcal{H}_{2g+1} : L(1/2,\chi_D) \neq 0 \Big \} \Big|.$$
Similarly as in \cite{BF1} (Section 6), we have that 
$$p_0(g) \geq 1- \frac{1}{2} \int_{-\infty}^{\infty} \widehat{\Phi}(y) \widehat{W}_{Sp(2g)}(y) \, dy+o(1),$$
as $g \to \infty$. 
The Fourier pair
$$ \Phi(x) = \Big( \frac{\sin(5 \pi x/2)}{5 \pi x/2} \Big)^2, \, \, \widehat{\Phi}(y) = \frac{2}{5} \Big(1- \frac{2|y|}{5} \Big), \, \text{ if } |y|<5/2$$ gives
$$ \int_{-\infty}^{\infty} \widehat{\Phi}(y) \widehat{W}_{Sp(2g)}(y) \, dy = \widehat{\Phi}(0) - \frac{1}{2} \int_{-1}^{1} \frac{2}{5} \Big(1 - \frac{2|y|}{5} \Big) \, dy = \frac{1}{25},$$ and hence
$$p_0(g) \geq \frac{24}{25}+o(1).$$ It then follows that more than $96 \%$ of $L(1/2,\chi_D)$ do not vanish.
}

\section{Appendix} 
\begin{lemma}
Let $ a >0 $ with $a = O(1)$. For $\theta \in [0, 2 \pi ]$, let $ \bar{ \theta} =  \min \{ \theta, 2 \pi - \theta \}$. Then
$$ \sum_{n=1}^g  \frac{ \cos(n \theta)}{ n q^{an}} = \log \min \Big\{  \frac{1}{a}, \min \Big\{ g, \frac{1}{\bar{\theta}} \Big\}\Big\}+ O(1).$$
\label{sumscos}
\end{lemma}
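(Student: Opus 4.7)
The plan is to split the sum at the cutoff $N := \lfloor \min\{g, 1/a, 1/\bar{\theta}\}\rfloor$ and evaluate the initial segment $1 \leq n \leq N$ and the tail $N < n \leq g$ by different methods. Because $\cos$ is $2\pi$-periodic and even, I may reduce to $\theta \in [0,\pi]$, so that $\bar\theta = \theta$ and $\sin(\theta/2) \asymp \bar\theta$.

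For the initial segment, the choice of $N$ ensures both $an \leq 1$ and $n\bar\theta \leq 1$, so the Taylor expansions $q^{-an} = 1 + O(an)$ and $\cos(n\theta) = 1 + O((n\bar\theta)^2)$ are valid. These yield
$$ \frac{\cos(n\theta)}{nq^{an}} = \frac{1}{n} + O(a) + O(n\bar\theta^2), $$
and summing from $n = 1$ to $N$ produces $\log N + O(1) + O(aN) + O(N^2\bar\theta^2) = \log N + O(1)$, since both $aN \leq 1$ and $N^2 \bar\theta^2 \leq 1$ by construction.

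The remaining work is to show the tail contribution is $O(1)$, which splits into three cases according to which quantity realises the minimum. If $g$ is the minimum, then $N = g$ and there is no tail. If $1/a$ is the minimum, then for $n \geq N+1 \geq 1/a$ we have $an \geq 1$ and hence $q^{-an} \leq q^{-1}$; pulling $1/n \leq 1/(N+1)$ out of the sum and summing the resulting geometric series gives a bound
$$ \frac{1}{N+1} \cdot \frac{q^{-a(N+1)}}{1 - q^{-a}}, $$
which is $O(1)$ by treating the regimes of small and bounded $a$ separately (the denominator $1 - q^{-a}$ is $\asymp a$ when $a$ is small, combining with $1/(N+1) \asymp a$ to give $O(1)$; when $a$ is bounded, $1-q^{-a}$ is bounded below and $1/N$ is too).

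The genuinely oscillatory case is when $1/\bar\theta$ is the minimum. Here I will use partial summation against the monotone weight $f(n) = 1/(nq^{an})$, together with the Dirichlet-type bound $|\sum_{n=1}^m \cos(n\theta)| \leq 1/\sin(\theta/2) \ll 1/\bar\theta$. The boundary terms contribute $O(1/(\bar\theta N))$, while the telescoping sum $\sum_{n > N} |f(n) - f(n+1)| \leq f(N+1) \leq 1/N$, multiplied by $1/\bar\theta$, again contributes $O(1/(\bar\theta N))$. Since $N \asymp 1/\bar\theta$ in this case, every tail piece is $O(1)$. Combining the three cases yields the desired $\log N + O(1) = \log\min\{g, 1/a, 1/\bar\theta\} + O(1)$. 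The main subtlety is precisely this trichotomy: the Taylor expansion is only clean up to $n \approx N$, so three distinct tail-control mechanisms (emptiness, exponential decay, and cancellation) are needed depending on which scale triggers the cutoff, and the partial summation step is the only point where oscillation of $\cos(n\theta)$ is genuinely exploited.
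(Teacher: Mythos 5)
Your proof is correct, and at the level of ideas it is essentially the paper's proof: split at a cutoff scale $N$, Taylor-expand on the initial segment to extract $\log N$, and show the tail is $O(1)$. The one point where you genuinely streamline the argument is in the tail when $1/\bar\theta$ is the controlling scale and $a < \bar\theta$. The paper splits that tail a second time at $[1/a]$, Taylor-expands $q^{-an} = 1 + O(an)$ on the range $[1/\bar\theta] < n < [1/a]$, reduces to $\sum \cos(n\theta)/n$, and handles the remaining range by geometric decay; you instead do a single Abel summation against the full monotone weight $f(n) = 1/(nq^{an})$, using only that $f$ is decreasing, $f(N+1) \ll 1/N$, and the Dirichlet kernel bound $|\sum_{n\leq m}\cos(n\theta)| \ll 1/\bar\theta$. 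This avoids the extra case split and the extra Taylor expansion, and is slightly more uniform across parameter regimes. Your unified cutoff $N = \lfloor\min\{g,1/a,1/\bar\theta\}\rfloor$ with a trichotomy on the tail also flattens the paper's nested case structure ($\bar\theta \lessgtr 1/g$, then $a$ versus $1/g$ or $\bar\theta$) into a single decomposition, which is a cosmetic but real simplification. The estimates themselves (geometric series bound $a/(q^a-1)=O(1)$, $q^{-an}=1+O(an)$ on the initial range, partial-summation boundary and telescoping bounds) match the paper's in substance.
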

\begin{proof}
If $ \bar{\theta}\leq 1/g,$ we write $\cos(n \theta) = 1+ O(n^2 \theta^2)$. Then
\begin{equation}
 \sum_{n=1}^g  \frac{ \cos(n \theta)}{ n q^{an}}  = \sum_{n=1}^g \frac{1}{n q^{an}} + O\big( (g \bar{\theta})^2\big)= \sum_{n=1}^g \frac{1}{n q^{an}} + O(1). 
 \label{ineq1}
 \end{equation}
If $a\leq 1/g$, then
$$ \sum_{n=1}^g \frac{1}{n q^{an}} \leq  \sum_{n=1}^g \frac{1}{n} = \log g+ O(1).$$
On the other hand, $q^{-an} \geq 1-an \log q$, so
$$ \sum_{n=1}^g \frac{1}{n q^{an}} \geq \sum_{n=1}^g \frac{1- an \log q}{n}= \log g +O(1).$$
Hence $$  \sum_{n=1}^g \frac{1}{n q^{an}} = \log g + O(1),$$ and combining the above with \eqref{ineq1} finishes the proof in this case. 

Now assume that $a > 1/g$. Then we write
\begin{equation}
 \sum_{n=1}^g \frac{1}{n q^{an}} = \sum_{n=1}^{[1/a]} \frac{1}{n q^{an}} + \sum_{n=[1/a]+1}^g \frac{1}{n q^{an}}.
 \label{ineq22}
 \end{equation} By a previous argument, we have
\begin{equation}
  \sum_{n=1}^{[1/a]+1} \frac{1}{n q^{an}} = \log \frac{1}{a}  +O(1).
  \label{ineq3}
  \end{equation}
Now
\begin{align}
\sum_{n=[1/a]+1}^{g} \frac{1}{n q^{an}}  \leq a \sum_{n=1/a}^g \frac{1}{q^{an}} \leq  \frac{a}{1-q^{-a}} = O(1),
\label{ineq4}
\end{align} where in the last line we used the fact that $e^x - 1 > x$ for $x >0$.
Combining equations \eqref{ineq22}, \eqref{ineq3} and \eqref{ineq4} finishes the proof in this case.

Now assume that $\bar{\theta} > 1/g.$ Write
\begin{equation}
\sum_{n=1}^g \frac{\cos( n \theta)}{n q^{an}} = \sum_{n=1}^{[1/\overline{ \theta}]} \frac{ \cos( n \theta)}{ n q^{an} } + \sum_{n= [1/\overline{ \theta}]+1}^g \frac{\cos( n \theta)}{n q^{an}}= \log  \min\Big \{  \frac{1}{a},\frac{1}{ \overline{ \theta} } \Big\} + O(1) + \sum_{n= [1/\overline{ \theta}]+1}^g \frac{\cos( n \theta)}{n q^{an}},
\label{ineq5}
\end{equation} where the last equality follows from the previous case.
If $ a \geq \bar{\theta}$, then 
\begin{equation}
\sum_{n=[ 1/ \overline{ \theta}]+1}^g \frac{\cos( n \theta)}{n q^{an}} \leq a \sum_{n= 1}^{\infty} \frac{1}{q^{an}} = \frac{a}{ q^a-1} = O(1), 
\label{ineq6}
\end{equation}
where we have used again the fact that $e^x - 1 > x$ for $x >0$.
Combining equations \eqref{ineq5} and \eqref{ineq6} finishes the proof when $a\geq \bar{\theta}.$ 

Finally assume that $a < \bar{\theta}$. Write
$$ \sum_{n=[ 1/ \overline{ \theta}]+1}^g \frac{\cos( n \theta)}{n q^{an}} = \sum_{n=[1/ \overline{\theta}]+1}^{[1/a]} \frac{\cos(n \theta)}{n q^{an}} + \sum_{n=[1/a]+1}^g \frac{ \cos( n \theta)}{n q^{an}},$$ and let $S_1$ denote the first summand above and $S_2$ the second. Note that 
$$ S_2 \leq  a \sum_{n=1}^{\infty} \frac{1}{q^{an}} = O(1).$$ For $S_1$, note that when $n<1/a$, we have $q^{-an}= 1+ O(an)$. Hence
$$ S_1 =  \sum_{n=[1/\overline{\theta}]+1}^{[1/a]} \frac{\cos(n \theta)}{n} + O(1)= O(1),$$ where the last equality follows by partial summation. This finishes the proof of Lemma \ref{sumscos}.
\end{proof}

\section*{Acknowledgments}
The authors would like to thank B. Conrey and K. Soundararajan for useful discussions, as well as Z. Rudnick for helpful comments on the paper. AF gratefully acknowledges support from NSF grant DMS-2101769 and NSF Postdoctoral Fellowship DMS-1703695 while working on this paper. JPK is pleased to acknowledge support from ERC Advanced Grant 740900 (LogCorRM).

\end{document}